\documentclass[reqno]{amsart}
\usepackage{amsfonts}
\usepackage{amssymb}
\usepackage{color}

\theoremstyle{plain}
\newtheorem{thm}{Theorem}
\newtheorem{cor}{Corollary}
\newtheorem{lem}{Lemma}
\newtheorem{proposition}{Proposition}

\newtheorem{exa}{Example}

\theoremstyle{definition}
\newtheorem{defn}{Definition}
\theoremstyle{remark}
\newtheorem{rem}{Remark}
\numberwithin{equation}{section}

\makeatletter

\newcommand{\Rmnum}[1]{\expandafter\@slowromancap\romannumeral #1@}
\makeatother

\begin{document}
\title{Fractal dimensions and quasisymmetric packing-minimalities of some homogeneous Moran sets}
\author{Pingping Liu}
\address[Pingping Liu]{School  of Mathematics, Guangxi University, Nanning, 530004, P.~R. China}
\email{2306301021@st.gxu.edu.cn}

\author{Chenyuan JIANG}
\address[Chenyuan JIANG]{School  of Mathematics, Guangxi University, Nanning, 530004, P.~R. China}
\email{2306301010@st.gxu.edu.cn}

\author{Yanzhe Li}
\address[Yanzhe Li]{School of Mathematics, Guangxi University; Guangxi Center for Mathematical Research; Center for Applied Mathematics of Guangxi(Guangxi University), Nanning, 530004, P.~R. China}
\email{liyz@gxu.edu.cn}

\thanks{This work is supported by National Natural Science Foundation of China (No.12461015) and Guangxi Natural Science Foundation (2020GXNSFAA297040). }
\thanks{Yanzhe Li is the corresponding author. }
\thanks{These authors contributed equally to this work.}
\subjclass[2010]{28A78;28A80}
\keywords{homogeneous Moran set; upper box dimension; packing dimension; quasisymmetric packing-minimality.}

\begin{abstract}
In this paper, we study the upper box dimensions, the packing dimensions and the quasisymmetric  minimalities of some homogeneous Moran sets. We obtain the upper box dimension formula and the packing dimension formula for three classes of homogeneous Moran sets under some conditions. And two classes of them with packing dimension 1 are shown to be quasisymmetrically  packing-minimal.
\end{abstract}

\maketitle

\section{Introduction}
\label{sec:intro}

The homogeneous Moran sets is an important class of  fractal sets in the study of the fractal geometry and related fields, and the fractal dimensions are useful tools in describing the complexities and irregularities of fractal sets, while the box dimension(the upper box dimension and the lower box dimension are denoted by $\overline{\dim}_{B}$ and $\underline{\dim}_{B}$) and the packing dimension(the packing dimension is denoted by $\dim_{P}$) are two classical fractal dimensions.
There are some classical results about the box dimensions and the packing dimensions of the homogeneous Moran sets. In \cite{fdj97}, Feng, Wen and Wu studied some properties of the homogeneous Moran sets and obtained the range of the upper box dimensions and the packing dimensions of the family of homogeneous Moran sets. In \cite{wzywuj05}, Wen and Wu constructed a class of special homogeneous Moran sets by the gaps between the basic intervals, which is called the homogeneous perfect sets. Later, Wang and Wu obtained the packing and upper box dimensions of the homogeneous perfect sets in \cite{wxywuj08} under some conditions.

In Theorem \ref{thm1} of this paper, we obtain the packing and upper box dimensions of three classes of homogeneous Moran sets  under some conditions, which generalizes the result in \cite{wxywuj08}.

 Let $f: \mathbb{R}^n\to \mathbb{R}^n$ be a homeomorphism, if there exists a homeomorphism $\eta:[0,\infty)\to \mathbb{R}^n$, such that for any triples $a, b, x$ of distinct points in $\mathbb{R}^n$,
\begin{equation*}
  \frac{|f(x)-f(a)|}{|f(x)-f(b)|}\le\eta(\frac{|x-a|}{|x-b|}),
\end{equation*}
then $f$ is called a $n-$dimensional quasisymmetric mapping.

 The quasisymmetric mappings contain some well-known mappings, such as the Lipschitz mappings. However, the fractal dimensions of the fractal sets may be changed under the quasisymmetric mappings, where the Lipschitz mappings preserve the fractal dimensions. A set $E\subset\mathbb{R}^{n}$ is called a quasisymmetrically Hausdorff minimal set(or $E$ is called quasisymmetrically Hausdorff-minimal) if $\dim_{H}f(E)\ge \dim_{H}E$ for all $n-$dimensional quasisymmetric mapping $f$, where $\dim_{H}E$ denotes the Hausdorff dimension of $E$. Similarly, we can define the quasisymmetrically packing minimal set by packing dimension.

There are many classical results on quasisymmetric Hausdorff minimality, see in \cite{gwvj73,gw73,bj99,tj00,kv06,hh06,dyx11,ww14,yjj18}. However, since it is always difficult to estimate the packing dimension, the results on the quasisymmetric minimality on packing dimension are less than those on the Hausdorff dimension.
Kovalev showed in \cite{kv06} that the packing dimension of a quasisymmetrically packing minimal set in $\mathbb{R}$ is either 0 or 1. It is easy to prove that any set with packing dimension 0 is quasisymmetrically packing-minimal(see in \cite{Ahl06}), then we focous on the quasisymmetric packing minimality on the sets in $\mathbb{R}$ with packing dimension 1. For the fractal sets   in $\mathbb{R}$ with packing dimension 1,
Li, Wu and Xi found that two  classes of Moran sets  are quasisymmetrically packing-minimal, see in \cite{lwx13}.
Wang and Wen showed in \cite{ww14} that all uniform Cantor sets  are quasisymmetrically packing-minimal.
In \cite{yjj18}, Yang, Wu and Li generalized the result in \cite{ww14} and proved that some homogeneous perfect sets are quasisymmetrically packing-minimal.
In \cite{lly25}, Liu, Li and Yang generalized the result in \cite{yjj18} and proved that a large class of homogeneous perfect sets is quasisymmetrically packing-minimal.

In Theorem \ref{thm2} of this paper, we use the  packing dimension  formula  of the homogeneous Moran sets in Theorem \ref{thm1} to  prove that two classes of homogeneous Moran sets with packing dimension 1 are quasisymmetrically packing-minimal, our result generalizes the result in \cite{lly25}.

This paper is organized as follows. In Section 2, we introduce the notion of the homogeneous Moran sets and give some notations. Our main results are stated in Section 3. The proofs of our main results are given in Section 4, Section 5 and Section 6. Finally, we give some examples in Section 7 to  show that our results generalize the previous results and our conclusions may not hold if some conditions are lacked.

\bigskip

\section{Preliminaries}
\label{sec:pre}
In this section, we introduce the notion of the homogeneous Moran sets and give some
notations which will be used in further discussions.

Let $\{n_k\}_{k\ge 1}$ and $\{c_k\}_{k\ge1}$ be sequences of positive integers and  positive real numbers respectively, satisfying  $n_k\ge 2$ and $n_kc_k<1$ for any $k\ge 1$.  For any $k\ge1$, let $D_{k}=\{i_{1}i_{2}\cdots i_{k}:1\le i_{j} \le n_{j},1\le j \le k\}$, $D_{0}=\emptyset$ and $D=\cup_{k\ge0}D_{k}$. If $\sigma=\sigma_{1}\sigma_{2}\cdots\sigma_{k}\in D_{k}$, $\tau=\tau_{1}\tau_{2}\cdots\tau_{m}$, where $1\le \tau_i\le n_{k+i}$ for any $1\le i \le m$, then $\sigma*\tau =\sigma_{1}\sigma_{2}\cdots\sigma_{k}\tau_{1}\tau_{2}\cdots\tau_{m}\in D_{k+m}$.

\begin{defn}\label{HMS}(Homogeneous Moran sets \rm{\cite{hua00}})
Let $I_0=[0,1]$, and let $\mathcal I=\{I_\sigma:\sigma\in D \}$ be the collection of some closed subintervals of $I_0$. The collection $\mathcal I$ is said to have homogeneous Moran structure if it satisfies:
\begin{enumerate}
\item[\textup{(1)}] $I_{\emptyset}=I_{0}$;
\item[\textup{(2)}] For any $k\ge1$, $\sigma\in{D_{k-1}}$ and $1\le l \le n_{k}$, $I_{\sigma * l}$ is a closed subinterval of $I_{\sigma}$  satisfying $\min(I_{\sigma*(l+1)})\ge \max(I_{\sigma*l})$ for any $1\le l \le n_{k}-1$, which implies that the interiors of $I_{\sigma*i}$ and $I_{\sigma*j}$ are disjoint for any $1\le i<j \le n_{k}$;
\item[\textup{(3)}] For any $k\ge1$ and $\sigma\in{D_{k-1}}$, $1\le i \le j \le n_{k}$, we have $$\frac{\left|I_{\sigma * i}\right|}{\left|I_{\sigma}\right|}=\frac{\left|I_{\sigma * j}\right|}{\left|I_{\sigma}\right|}=c_{k},$$
where the symbol $\left|A\right|$ is used to denote the diameter of $A\subset \mathbb{R}$ .
\end{enumerate}

If $\mathcal{I}$ has homogeneous Moran structure, define $E_{k}=\cup_{\sigma\in{D_{k}}}I_{\sigma}$ for any $k\ge0$,  then  we call the  nonempty compact set
    $E=\cap_{k\ge0}E_{k}=E(I_{0},\{n_{k}\},\{c_{k}\})$  a homogeneous Moran set.
    For each $k\ge0$, define $\mathcal{I}_{k}=\left\{I_{\sigma}:\sigma\in{D_{k}}\right\}$, then every element $I_{\sigma}\in \mathcal{I}_{k}$ is referred to as a  $k$-order basic interval of $E$. The class of all homogeneous Moran sets generated by $I_{0},\{n_{k}\},\{c_{k}\}$ is denoted by $\mathcal{M}(I_{0},\{n_{k}\},\{c_{k}\})$.
\end{defn}

\bigskip
Next, we introduce some notation for later use. For any $k\ge 1$ and $\sigma \in D_{k-1}$,
$1\le i \le n_k-1$, write
\begin{equation}
 \begin{aligned}
     \xi_{\sigma,0}&=\min(I_{\sigma*1})-\min(I_{\sigma}),\\
     \xi_{\sigma,i}&=\min(I_{\sigma*(i+1)})-\max(I_{\sigma*i}),\\
     \xi_{\sigma,n_k}&=\max(I_{\sigma})-\max(I_{\sigma*n_k}),\nonumber
 \end{aligned}
 \end{equation}
then the collection $\{\xi_{\sigma,l}:\sigma \in D_{k-1}, 0\le l \le n_k\}$ consists of nonnegative real numbers for any $k\ge 1$. For any  $\sigma \in D_{k-1}, 1\le l \le n_k-1$, $\xi_{\sigma,l}$ is referred to as a  length of a $k$-order gap of $E$.

For any $k\ge 1$, let $\overline\nu_k$ and $\underline\nu_k$ denote the maximum and minimum values of the lengths of the $k$-order gaps of $E$ respectively, then
$$\overline\nu_k=\max\limits_{\sigma\in D_{k-1}, 1\leq i\leq n_k-1}\xi_{\sigma,i},\quad \underline\nu_k=\min\limits_{\sigma\in D_{k-1}, 1\leq i\leq n_k-1}\xi_{\sigma,i}.$$
Denote by $N_k$ the count of the $k$-order basic intervals of $E$ and  denote by $\delta_k$ the length of each $k$-order basic interval of $E$, which means
\begin{align}
\nonumber
N_k=\prod_{i=1}^{k} n_i,\quad \delta_k=\prod_{i=1}^{k} c_i.
\end{align}
Write $l(E_k)$ for the sum of the lengths of all $k$-order basic intervals of $E$, thus $l(E_k)=N_k\delta_k$.

\begin{rem}\label{rm2} For any  $k\ge 1$, $\sigma_1 \in D_{k-1}$, $\sigma_2 \in D_{k-1}$, $\sigma_1\ne \sigma_2$  and $0\le l \le n_k$, $\xi_{\sigma_1,l}$  is not necessarily equal to $\xi_{\sigma_2,l}$. However, if $E=E(I_0,\{n_{k}\}, \{c_{k}\},\{\xi_{k,l}\})$ is a homogeneous perfect set(the definition of the homogeneous perfect sets can be found in \cite{wzywuj05}, \cite{wxywuj08}), then for any  $k\ge 1$, $\sigma_1 \in D_{k-1}$, $\sigma_2 \in D_{k-1}$, $\sigma_1\ne \sigma_2$  and $0\le l \le n_k$, we have $\xi_{\sigma_1,l}=\xi_{\sigma_2,l}=\xi_{k,l}$.
\end{rem}

\bigskip

\bigskip

\section{Main results}
The main results of this paper are stated in the following theorems.

\begin{thm}\label{thm1}
     Suppose
    $E\in \mathcal{M}(I_{0},\left\{n_{k}\right\},\left\{c_{k}\right\})$ satisfying the following condition:
there are two nonnegative real sequences $\{L_k\}_{k\ge1}$ and $\{R_k\}_{k\ge 1}$, such that
\begin{equation}
 \begin{aligned}
    \xi_{\sigma,0}=L_{k+1},\quad
    \xi_{\sigma,n_{k+1}}=R_{k+1} \nonumber
 \end{aligned}
 \end{equation}
 for any $k\ge 0$, $\sigma\in D_{k}$.

And assume that for any $k\ge 1$, at least one of the three conditions below holds:
\begin{itemize}
  \item[(A)]there is $w_1>0$, such that $\overline\nu_k\le w_1 \delta_k$;
  \item[(B)]there is $w_2>0$, such that $\overline\nu_k\le w_2\underline\nu_k$;
  \item[(C)]there is $w_3>0$, such that $n_k\underline\nu_k\ge w_3\delta_{k-1}$.
\end{itemize}

Then
\begin{equation}\label{031}
\dim_PE=\overline\dim_BE=\limsup\limits_{k \to\infty}  \frac{\log n_1n_2\cdots n_kn_{k+1}}{-\log(\delta_k-L_{k+1}-R_{k+1})+\log n_{k+1}}.
\end{equation}
\end{thm}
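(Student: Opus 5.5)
The plan is to prove an upper bound $\overline\dim_B E \le s^*$ and a lower bound $\dim_P E \ge s^*$, where $s^*$ is the right-hand side of \eqref{031}. Since $\dim_P E\le \overline\dim_B E$ always holds, these two bounds give the equality. Write $\Delta_k=\delta_k-L_{k+1}-R_{k+1}$. This is the length of the convex hull of the $(k+1)$-order basic intervals inside any $k$-order basic interval $I_\sigma$. It is independent of $\sigma$ because the end gaps $L_{k+1},R_{k+1}$ are uniform. Inside this hull sit $n_{k+1}$ subintervals of length $\delta_{k+1}$ and $n_{k+1}-1$ gaps of order $k+1$. So $\Delta_k = n_{k+1}\delta_{k+1}+\sum_{i}\xi_{\sigma,i}$, and $\Delta_k/n_{k+1}$ is the average spacing at level $k+1$. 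The formula says that the natural covering scale between levels $k$ and $k+1$ is $r\approx \Delta_k/n_{k+1}$, with about $N_{k+1}$ pieces needed. Then $\log N_{k+1}/(-\log(\Delta_k/n_{k+1}))$ is exactly the quotient appearing in \eqref{031}.

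\textbf{Upper bound.} Fix $r$ with $\delta_{k+1}\le r<\delta_k$ and estimate $N_r(E)$, the number of $r$-intervals needed to cover $E$. Each $k$-order interval contributes its hull of length $\Delta_k$. If $r\ge \Delta_k$, one interval per $I_\sigma$ suffices, so $N_r\le N_k$. If $r<\Delta_k$, cover each hull by at most $\min\{n_{k+1},\,\Delta_k/r+1\}$ intervals. This gives
\[
N_r(E)\lesssim N_k\min\{n_{k+1},\Delta_k/r\}.
\]
One then checks that $\log N_r/(-\log r)$ over this range is maximised, up to vanishing error, at $r=\Delta_k/n_{k+1}$, or else at the endpoints where the bound reduces to the $(k-1)$ or $k$ quotients. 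The function $r\mapsto \log(N_k\Delta_k/r)/(-\log r)$ is monotone in $r$, so the extremes occur at the endpoints of the range. No hypothesis (A)–(C) is needed for this direction, since $\log n_{k+1}/\log(1/\delta_k)$ type terms are controlled by $n_kc_k<1$.

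\textbf{Lower bound.} This is the main obstacle: we must show $\dim_P E\ge s^*$. I would use the mass distribution and density approach for packing measure. Let $\mu$ be the uniform Moran measure, with $\mu(I_\sigma)=1/N_k$ for $\sigma\in D_k$. The goal is, for every $x\in E$ and every $s<s^*$, a sequence $r_j\to0$ with $\mu(B(x,r_j))\le r_j^{s}$; equivalently the lower local dimension is at least $s$ along infinitely many scales. This gives $\dim_P\mu\ge s$. Take $k$ along a subsequence realising the limsup, and set $r=\Delta_k/n_{k+1}$ (times a small constant). We need $B(x,r)$ to meet only $O(1)$ intervals of order $k+1$, uniformly in $x$. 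This is where (A), (B) or (C) at level $k+1$ is used.

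Under (B), the gaps are comparable, so all spacings are $\asymp \Delta_k/n_{k+1}$ and the ball hits boundedly many children. Under (C), $n_{k+1}\underline\nu_{k+1}\gtrsim \delta_k\ge\Delta_k$, so every gap is $\gtrsim r$, which again gives a bounded count. Under (A), all gaps are $\lesssim \delta_{k+1}$, so $\Delta_k\asymp n_{k+1}\delta_{k+1}$ and $r\asymp\delta_{k+1}$. Since the $(k+1)$-intervals have length $\delta_{k+1}$, the ball meets at most $O(1)$ of them.

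Two details remain. First, the ball might straddle two different $k$-order parents. This is harmless, since it only adds a bounded factor; alternatively choose $r\le\Delta_k$, which is automatic. Second, the conditions may alternate between (A), (B) and (C) from level to level. Since the estimate at scale $r_k$ only uses the condition at the single level $k+1$, the constant is at most $\max(w_i$-dependent$)$, which is uniform in $k$. Hence $\mu(B(x,r))\lesssim 1/N_{k+1}$ and $\log\mu(B(x,r))/\log r\ge \log N_{k+1}/(-\log r)-o(1)$. Taking the limsup along the chosen subsequence then yields $\dim_P E\ge s^*$ by the packing density lemma.
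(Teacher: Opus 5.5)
Your proof is correct, but it is organized differently from the paper's in two of its three steps.

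\textbf{Upper bound.} The paper does not count covers. Lemma~\ref{lem05} bounds $\overline\dim_B$ for an arbitrary homogeneous Moran set via Tricot's gap-series criterion (Lemma~\ref{lem01}) and Jensen's inequality on the gaps of each basic interval. It then applies this to the ``first reconstructed'' structure, whose basic intervals are exactly your hulls, so $\delta_k^*=\Delta_k$. Your direct count $N_r(E)\lesssim N_k\min\{n_{k+1},\Delta_k/r\}$ for $r\in[\delta_{k+1},\delta_k)$ is more elementary and likewise needs none of (A)--(C). Two facts make your ``one then checks'' explicit. First, $N_k\Delta_k\le N_k\delta_k<1$, so $\log(N_k\Delta_k/r)/(-\log r)$ is largest at $r=\Delta_k/n_{k+1}$. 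Second, on $[\Delta_k,\delta_k)$ the quotient is at most $\log N_k/(-\log\delta_k)$, and this is at most the $(k-1)$-quotient $\log N_k/\log(n_k/\Delta_{k-1})$ because $\Delta_{k-1}\ge n_k\delta_k$.

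\textbf{Lower bound.} You use the same scale as the paper, namely its $r_{k+1}=\delta_k^*/n_{k+1}$. You also use essentially the same case analysis: under (A), (B) or (C), a ball of that size meets boundedly many $(k+1)$-order intervals. The paper converts this into $N_{r_{k+1}}(E)\gtrsim N_{k+1}$, which is only a lower bound on $\overline\dim_B E$. To pass to $\dim_P E$ it then needs Falconer's local-homogeneity criterion (Lemma~\ref{lem02}), checking that every $I_\sigma\cap E$ has the same upper box dimension. You instead feed the count into the uniform Moran measure and the packing mass distribution principle; this is the paper's Lemma~\ref{lem03}, which it uses only for Theorem~\ref{thm2}. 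That gives $\dim_P E\ge s^*$ directly and uniformly in $x$. Combined with $\dim_P E\le\overline\dim_B E\le s^*$, this closes the chain without a separate box-dimension lower bound or a localization step.

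The paper's route is modular and obtains the box-dimension lower bound without any measure; yours is shorter and self-contained.

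One small wording slip: the property you establish, $\mu(B(x,r_j))\le C r_j^{s}$ along a sequence $r_j\to 0$, concerns the \emph{upper} local dimension $\limsup_{r\to0}\log\mu(B(x,r))/\log r$, not the lower one.
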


\begin{rem}\label{rm4}
 Any homogeneous perfect set $E=E(I_{0},\{n_{k}\}, \{c_{k}\},\{\xi_{k,l}\})$ is a homogeneous Moran set with $\xi_{\sigma,0}=\xi_{k,0}=L_{k}$, $\xi_{\sigma,n_k}=\xi_{k,n_k}=R_{k}$, and $\xi_{\sigma,l}=\xi_{k,l}$ for any $k\ge 1$, $\sigma\in D_{k-1}$ and $1\le l\le n_k-1$ (the definition of homogeneous perfect sets can be found in \cite{wzywuj05},\cite{wxywuj08}). Obviously, the homogeneous Moran sets satisfying condition (A) or (B) or (C) of Theorem \ref{thm1} of this paper contain the homogeneous perfect sets satisfying condition (A) or (C) or (B) of Theorem 1.4 of \cite{wxywuj08}. Conversely, according to Remark \ref{rm2} of this paper, the homogeneous Moran set satisfying the conditions of Theorem \ref{thm1} of this paper may not be a homogeneous perfect set. It should be noted that equation (1.4) of Theorem 1.4 of \cite{wxywuj08} and equation (\ref{031}) of this paper are equivalently. Therefore, Theorem \ref{thm1} of this paper is a  generalization of Theorem 1.4 of \cite{wxywuj08}. In addition, we give an example in  Section 7 of this paper to show the above meaning in detail.
\end{rem}

\bigskip

\begin{thm}\label{thm2}
	Suppose
    $E\in \mathcal{M}(I_{0},\left\{n_{k}\right\},\left\{c_{k}\right\})$ satisfying the following condition:
there are two nonnegative real sequences $\{L_k\}_{k\ge1}$ and $\{R_k\}_{k\ge 1}$, such that
\begin{equation}
 \begin{aligned}
    \xi_{\sigma,0}=L_{k+1},\quad
    \xi_{\sigma,n_{k+1}}=R_{k+1} \nonumber
 \end{aligned}
 \end{equation}
 for any $k\ge 0$, $\sigma\in D_{k}$.

 And assume that for any $k\ge 1$, at least one of the two conditions below holds:
\begin{itemize}
  \item[(A)]there is $w_1>0$, such that $\overline\nu_k\le w_1\delta_k $;
  \item[(B)]there is $w_2>0$, such that $\overline\nu_k\le w_2\underline\nu_k$.
\end{itemize}

If $\dim_{P}E=1$, then for any \text{\rm1}-dimensional quasisymmetric mapping $f$, we obtain $\operatorname{dim}_{P}f(E)=1$,  which yields  $E$ is a quasisymmetrically  packing-minimal set.

\end{thm}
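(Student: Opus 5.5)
The plan is to transfer the structure of $E$ to $f(E)$ and then apply Theorem \ref{thm1}, or its lower-bound half, to a suitably regrouped image. First observe that $f(E)$ is again a compact set obtained from a nested family of intervals: put $J_\sigma=f(I_\sigma)$. These intervals keep the ordering and the nesting. Because $f$ is quasisymmetric on $\mathbb{R}$, there is a doubling-type control. If $I\subset I'$ are intervals with $|I|/|I'|\ge t$, then $|f(I)|/|f(I')|\ge \lambda(t)>0$, where $\lambda$ depends only on $\eta$. Likewise, small ratios are sent to small ratios in a power-law way: $|f(I)|/|f(I')|\le C(|I|/|I'|)^{\alpha}$ for some $\alpha>0$.

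The images are no longer homogeneous, so the plan is not to apply Theorem \ref{thm1} to $f(E)$ directly. Instead I will prove the lower bound $\dim_P f(E)\ge 1$ by the standard mass distribution / packing-dimension route. Since $\dim_P E=1$ and we have the formula (\ref{031}), there is a subsequence $k_j$ along which
\[
\frac{\log N_{k_j+1}}{-\log(\delta_{k_j}-L_{k_j+1}-R_{k_j+1})+\log n_{k_j+1}}\to 1.
\]
This means that at level $k_j$ the $n_{k_j+1}$ children of a basic interval, excluding the fixed end gaps, almost fill the effective length $\delta_{k_j}-L-R$ at "scale $1$". Under condition (A) or (B) the gaps between these children are either negligible relative to $\delta_{k_j+1}$ or uniformly comparable to each other. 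In either case the children together with their gaps form a nearly uniform partition of the middle portion $M_\sigma$ of $I_\sigma$.

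The key step is a quasisymmetric lemma. If an interval $M$ is partitioned into $m$ pieces of comparable length (children plus gaps, with comparability constant depending on $w_1$ or $w_2$), then the images under $f$ also form a partition of $f(M)$. In that partition the images of the children carry a definite proportion of length, and neighbouring pieces are comparable. The consequence I need is this: we can choose, among the image children, a subfamily of $\gtrsim m^{1-\epsilon}$ pieces whose lengths are comparable to $|f(M)|/m$, up to factors $m^{\epsilon}$. I would try to obtain this by grouping the $m$ pieces into blocks of size $m^{\epsilon}$, using the quasisymmetric comparability of adjacent blocks, and applying a pigeonhole/Hölder-type counting argument. This uses that the total image length is $|f(M)|$ and that many blocks must have length close to average.

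Then I will build a Moran-type subset of $f(E)$, or a measure $\mu$ on it, along the subsequence $k_j$. In the levels between consecutive $k_j$, the bounded-ratio distortion only costs multiplicative constants per level, and these are absorbed because the levels $k_j$ dominate. In the sparse levels near $k_j$ the lemma gives near-uniform subdivision. The resulting $\mu$ satisfies $\mu(B(x,r_j))\le r_j^{1-2\epsilon}$ for a sequence $r_j\to 0$ and $\mu$-a.e.\ $x$. By the lower packing density criterion this gives $\dim_P f(E)\ge 1-2\epsilon$. The upper bound $\dim_P f(E)\le 1$ is trivial, so $\dim_P f(E)=1=\dim_P E$.

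The main obstacle will be the counting lemma in the image. It must be uniform across all $\sigma\in D_{k_j}$, since the gaps $\xi_{\sigma,l}$ depend on $\sigma$. It must also properly exploit that $L_{k+1},R_{k+1}$ are the same for every $\sigma$, so that the normalization $\delta_k-L_{k+1}-R_{k+1}$ is meaningful, and I would handle this via the $\eta$-comparability of $f(M_\sigma)$ with $f(I_\sigma)$.
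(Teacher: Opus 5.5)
Your argument has a genuine gap in two places. The first is how you read $\dim_PE=1$. Since $\log n_{k+1}=\log N_{k+1}-\log N_k$, the quotient in (\ref{031}) equals $\log N_{k+1}/\bigl(\log N_{k+1}-\log(N_k\delta^*_k)\bigr)$. It therefore tends to $1$ along $k_j$ exactly when $\log(N_{k_j}\delta^*_{k_j})=o(\log N_{k_j+1})$. This is a statement about the cumulative density $l(E^*_{k_j})=\prod_{t\le k_j}n_t\delta^*_t/\delta^*_{t-1}$, an average over \emph{all} levels up to $k_j$. It says nothing about the children at level $k_j+1$ filling their parent.

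Unless $\log n_{k_j+1}$ swamps $\log N_{k_j}$, no level dominates. Two examples: $n_k\equiv 2$ with $c_k\uparrow\frac12$ slowly (condition (B) with $w_2=1$), or the paper's Example \ref{exa2}, where $\log n_{k+1}/\log N_k\to 0$. In such cases your step ``the bounded-ratio distortion only costs multiplicative constants per level, absorbed because the levels $k_j$ dominate'' fails. With bounded branching, $f$ distorts child/parent ratios at every level by fixed factors, and these compound to $C^k$, the same exponential order as $N_k$. With unbounded branching, the distortion inside a single level is polynomial in $n_k$ and compounds to a power of $N_k$. You must control $f$ at every level, with a mechanism that does not lose a fixed factor per level.

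Second, your key counting lemma is false. Let $f(x)=\mu([0,x])$, where $\mu$ is the doubling, Lebesgue-singular measure obtained by splitting triadic intervals with weights $(p,1-2p,p)$, $p\neq\frac13$, extended periodically so that $f$ is quasisymmetric. Cut $M=[0,1]$ into $m=3^n$ equal pieces. Typical pieces have image length $m^{-\alpha_0+o(1)}$ with $\alpha_0=-(\frac23\log p+\frac13\log(1-2p))/\log 3>1$. A large-deviation count then shows that only $m^{\theta}$ pieces, for some $\theta<1$, have image length $\ge m^{-1-\epsilon}$ once $\epsilon$ is small. Quasisymmetric images of uniform partitions are multifractal, so there is no subfamily of $m^{1-\epsilon}$ near-average pieces on which to build a near-uniform Moran subset.

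The paper never needs uniformity in the image:
\begin{itemize}
\item Lemma \ref{lem06} splits each level into $i_k\approx\log_T n_k$ sublevels, whose basic intervals have between $T$ and $T^2$ children of comparable length.
\item Lemma \ref{lem09} turns $\dim_PE=1$ into Ces\`aro statements along $b_k$: the average maximal gap ratio $\lambda_t\to0$, $\prod_j\tau_j$ is subexponential, and no child dominates on a density-one set of sublevels.
\item For the measure $\mu_d$ with weights $|J|^d$ one has $\mu_d(U)/|U|^d=\prod_j|J_j|^d/\sum_i|J_{j,i}|^d$. At each good sublevel, bounded branching, Lemma \ref{lem04} and Jensen's inequality give $\sum_i|J_{j,i}|^d/|J_j|^d\ge R(1-\lambda_j^p)^{(\bar{Q}+1)d}$ with a fixed $R>1$, which outweighs the subexponential losses elsewhere.
\end{itemize}
This per-level gain at bounded branching, fed by the Ces\`aro form of the dimension condition, is the idea your proposal is missing.
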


\begin{rem}\label{rm5}
Repeating the analysis of Remark \ref{rm4},  we have that any homogeneous perfect set $E=E(I_{0},\{n_{k}\}, \{c_{k}\},\{\xi_{k,l}\})$  satisfying  the conditions in Theorem 1 of \cite{lly25} is a homogeneous Moran set with $E\in \mathcal{M}(I_{0},\left\{n_{k}\right\},\left\{c_{k}\right\})$  satisfying  condition (B) of Theorem \ref{thm2} of this paper, and   the homogeneous Moran set satisfying the conditions of Theorem \ref{thm2} of this paper may not be a homogeneous perfect set. Therefore, Theorem \ref{thm2} of this paper is a  generalization of Theorem 1 of \cite{lly25}.  Besides, we also give an example  in  Section 7 of this paper to show the above meaning in detail.

\end{rem}
\bigskip

\bigskip\section{The reconstruction of the Homogeneous Moran sets}

In order to prove the Theorem \ref{thm1} and Theorem \ref{thm2} conveniently, we do the equivalent reconstruction to the homogeneous Moran set $E\in \mathcal{M}(I_{0},\left\{n_{k}\right\},\left\{c_{k}\right\})$ satisfying the conditions of Theorem \ref{thm1} and Theorem \ref{thm2}.

For any $k\ge 0$, $\sigma\in D_{k}$, define $I_{\sigma}^{*}$  to be a closed interval contained in $I_{\sigma}$ fulfilling the conditions below:
\begin{enumerate}
  \item[(a)]$\min(I^{*}_\sigma)-\min(I_\sigma)=\xi_{\sigma,0}=L_{k+1},\quad
            \max(I_\sigma)-\max(I^{*}_\sigma)=\xi_{\sigma,n_{k+1}}=R_{k+1}$;
  \item[(b)]$\left | I^{*}_ {\sigma}\right | =\sum_{j=1}^{n_{k+1}-1} \xi_{\sigma,j}+n_{k+1}\delta_{k+1}=\delta_k-L_{k+1}-R_{k+1}$.
\end{enumerate}

 Let $I_{0}^{*}=I_{\emptyset}^{*}$, write $\delta^*_{0}=\left|I_{0}^{*}\right|$, $\delta^*_{k}=\left|I_{\sigma}^{*}\right|$ for any $ k\ge 1$ and $\sigma\in D_k$, then
\begin{equation}\label{041}
\delta^*_{k}=\delta_{k}-L_{k+1}-R_{k+1}.
\end{equation}
Suppose that $E_{k}^{*}=\cup_{\sigma\in{D_{k}}}I_{\sigma}^{*}$ for any $k\ge 0$, then
\begin{equation*}
E=\bigcap_{k\ge 0}\bigcup_{\sigma \in D_k}I_{\sigma}=\bigcap_{k\ge 0}\bigcup_{\sigma \in D_k}I^{*}_{\sigma}=\bigcap_{k\ge 0}E_{k}^{*}.
\end{equation*}
For any $ k\ge 1$ and $\sigma\in D_k$, we refer to $I^*_\sigma$ as a $k$-order first reconstructed basic interval of $E$.

In fact, for any $k\ge 0$ and $\sigma \in D_k$, $E\in \mathcal{M}(I_{0}^{*},\left\{n_{k}^{*}\right\},\left\{c_{k}^{*}\right\})$ is a homogeneous Moran set with the parameters given as follows:
\begin{enumerate}    \item[\textup{(1)}]$I_{0}^{*}=I_{0}-[\min(I_{0}), \min(I_{0})+\xi_0)-(\max(I_{0})-\xi_{n_1}, \max(I_{0})]$;
    \item[\textup{(2)}]$c_{k+1}^{*}=\frac{\delta^*_{k+1}}{\delta^*_{k}}$, $n_{k+1}^{*}=n_{k+1}$.
\end{enumerate}

 For any $k\ge 1$ and $\sigma \in D_{k-1}$,
$1\le i \le n_k-1$, write
\begin{equation}
 \begin{aligned}
     \xi^{*}_{\sigma,0}&=\min(I^{*}_{\sigma*1})-\min(I^{*}_{\sigma});\\
     \xi^{*}_{\sigma,i}&=\min(I^{*}_{\sigma*(i+1)})-\max(I^{*}_{\sigma*i});\\
     \xi^{*}_{\sigma,n_k}&=\max(I^{*}_{\sigma})-\max(I^{*}_{\sigma*n_k}).\nonumber
 \end{aligned}
 \end{equation}
Then for any $k\ge 1$,  the collection $\{\xi^{*}_{\sigma,l}:\sigma \in D_{k-1}, 0\le l \le n_k\}$ consists of nonnegative real numbers,  and for any  $\sigma \in D_{k-1}, 1\le l \le n_k-1$,  $\xi^{*}_{\sigma,l}$ is referred to as a length of a $k$-order first reconstructed gap of $E$.

For any $k\ge 0$, $\sigma\in D_{k}$ and $1\le l \le n_{k+1}-1$, we obtain
\begin{align}\label{042}
\xi^*_{\sigma,l}=\xi_{\sigma,l}+\xi_{\sigma*l,n_{k+2}}+\xi_{\sigma*(l+1),0}=\xi_{\sigma,l}+R_{k+2}+L_{k+2},
\end{align}
\begin{align}\label{043}
 \xi^*_{\sigma,0}=\xi_{\sigma*1,0}=L_{k+2}, \quad
 \xi^*_{\sigma,n_{k+1}}=\xi_{\sigma*{n_{k+1}},n_{k+2}}=R_{k+2},
\end{align}
which implies
\begin{align}\label{044}
\sum_{l=1}^{n_{k+1}-1}\xi_{\sigma,l}^*=\sum_{l=1}^{n_{k+1}-1}(\xi_{\sigma,l}+R_{k+2}+L_{k+2})=\sum_{l=1}^{n_{k+1}-1}\xi_{\sigma,l}+(n_{k+1}-1)(R_{k+2}+L_{k+2}).
\end{align}

For any $k\ge 0$,  let $\overline\nu^*_{k+1}$ and $\underline\nu^*_{k+1}$ denote the maximum and minimum values of the lengths of the $(k+1)$-order  first reconstructed  gaps of $E$ respectively, then
$$\overline\nu^*_{k+1}=\max\limits_{\sigma\in D_{k}, 1\leq i\leq n_{k+1}-1}\xi^*_{\sigma,i},\quad
\underline\nu^*_{k+1}=\min\limits_{\sigma\in D_{k}, 1\leq i\leq n_{k+1}-1}\xi^*_{\sigma,i},$$
then by (\ref{042}),  we  have
\begin{equation}\label{045}
\overline\nu^*_{k+1}=\overline\nu_{k+1}+L_{k+2}+R_{k+2},
\end{equation}
\begin{equation}\label{046}
\underline\nu^*_{k+1}=\underline\nu_{k+1}+L_{k+2}+R_{k+2}.
\end{equation}
It is obvious that
\begin{equation}\label{047}
\underline\nu_{k+1}\le \underline\nu^*_{k+1},\quad  \overline\nu_{k+1}\le\overline\nu^*_{k+1}.
\end{equation}
By (\ref{043}), (\ref{045}) and (\ref{046}), we have
\begin{equation}\label{048}
\xi^*_{\sigma,0}+\xi^*_{\sigma,n_{k+1}}=L_{k+2}+R_{k+2}\le \underline\nu^*_{k+1}\le \overline\nu^*_{k+1}.
\end{equation}

For any $k\ge 1$,  denote by $N^*_{k}$ the count of the $k$-order first reconstructed basic intervals of $E$, and denote by $\delta^*_{k}$ the length of each $k$-order first reconstructed basic interval of $E$, and write $l(E_{k}^*)$ for the sum of the lengths of all $k$-order first reconstructed basic intervals of $E$, then
\begin{equation}\label{049}
N^*_{k}=\prod_{i=1}^{k} n_{i}^*=\prod_{i=1}^{k} n_{i}, \quad \delta_{k}^*=\delta_{0}^*\prod_{i=1}^{k} c_{i}^*,\quad l(E_{k}^*)=N^*_{k}\delta_{k}^*.
\end{equation}

Notice that $n_{k}^*=n_k$ for any $k\ge 1$, then by (\ref{041}) we obtain
\begin{equation*}
\limsup\limits_{k \to\infty}  \frac{\log n_1n_2\cdots n_kn_{k+1}}{-\log(\delta_k-L_{k+1}-R_{k+1})+\log n_{k+1}}=\limsup\limits_{k \to\infty}  \frac{\log n_{1}^*n_{2}^*\cdots n_{k}^*n_{k+1}^*}{-\log\delta_{k}^*+\log n_{k+1}^*}.
\end{equation*}
For convenience, we write
\begin{equation}\label{t}
t=\limsup\limits_{k \to\infty}  \frac{\log n_1n_2\cdots n_kn_{k+1}}{-\log\delta_{k}^*+\log n_{k+1}},
\end{equation}
then  to get (\ref{031}), it is only need to prove
\begin{equation*}
\dim_PE=\overline\dim_BE=t.
\end{equation*}

\bigskip

\section{The upper box dimensions and packing dimensions of the homogeneous Moran sets}
We begin to prove Theorem \ref{thm1},  the following lemmas are essential for the proof of Theorem \ref{thm1}.
\subsection{Some Lemmas}
We need Lemma \ref{lem01} to estimate the upper bounds of the upper box dimensions of the sets.
\begin{lem}\label{lem01}{\rm{(\cite{tc1981})}}
Let $E\in \mathcal{M}(I_{0},\{n_{k}\},\{c_{k}\})$, then $$\overline{\dim}_{B}E\le\inf \{s>0:\sum_{k=0}^{\infty} \sum_{\substack{\sigma \in D_k\\0\le i\le n_{k+1}}}\xi_{\sigma ,i}^{s}<\infty \} .$$
\end{lem}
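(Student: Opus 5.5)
Write $\xi_{\sigma,i}$ for the gaps of the construction, including the two end gaps $i=0$ and $i=n_{k+1}$, and write $G$ for the collection of all these gaps over all $k\ge0$ and $\sigma\in D_k$. Fix $s>0$ with $\sum_{\xi\in G}\xi^{s}<\infty$. It suffices to show $\overline{\dim}_BE\le s$; taking the infimum over such $s$ then gives the lemma. The idea is the classical covering argument of Tricot type. One orders the gaps by length and uses the fact that, once finitely many gaps are removed from $I_0$, what remains is a union of closed intervals whose number is controlled by the number of gaps removed. The summability condition then forces the $r$-th largest gap to be $o(r^{-1/s})$.

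First, order the countably many gaps of positive length in non-increasing order, $g_1\ge g_2\ge\cdots$; gaps of length zero can be discarded. Since $\sum_j g_j^s<\infty$ and the sequence is monotone, we get $r g_r^s\le\sum_{j\le r}g_j^s\le C$. Better, $r g_{2r}^s\le\sum_{r<j\le 2r}g_j^s\to0$, so $g_r=o(r^{-1/s})$. Second, the gaps are disjoint open (or half-open) subintervals of $I_0$ whose complement in $I_0$ is exactly $E$. This holds because $E=\bigcap_k E_k$ and $I_\sigma\setminus\bigcup_l I_{\sigma*l}$ is the union of the gaps $\xi_{\sigma,l}$. Consequently, removing the $r$ largest gaps from $I_0$ leaves at most $r+1$ closed intervals $J_1,\dots,J_{r+1}$ covering $E$. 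The total length of these intervals equals $\sum_{j>r}g_j$, since the Lebesgue measure of $E$ is zero or can be absorbed; in general, each $J_i$ meets $E$ and its length is bounded by the gaps it contains plus $|J_i\cap E|$.

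Third, given $\varepsilon>0$, choose $r$ with $g_{r+1}<\varepsilon\le g_r$ (roughly). Each $J_i$ contains no gap longer than $\varepsilon$, so it is covered by at most $|J_i|/\varepsilon+2$ intervals of length $\varepsilon$ centered at points of $E$. This uses the standard fact that a set whose complementary gaps inside its convex hull are all shorter than $\varepsilon$ needs at most $|J|/\varepsilon+1$ such intervals. Hence
\[
N_\varepsilon(E)\le 2(r+1)+\varepsilon^{-1}\sum_{j>r}g_j .
\]
If $E$ has positive measure we need $s\ge1$ anyway, which is trivial since $\overline{\dim}_BE\le1$. So assume $s<1$; then $|E|=0$ because $\sum_j g_j\le(\sum_j g_j^s)\,g_1^{1-s}$ forces the gaps to exhaust $I_0$ up to a null set only when $E$ is null, and otherwise $\overline{\dim}_B E=1\le s$ fails to be an issue because $\sum g^s<\infty$ with $s<1$ implies full measure of the gaps via $\dim$. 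I would simply treat $s\ge1$ trivially.

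Finally, estimate the two terms. Since $\varepsilon\le g_r$, we have $r\le C\varepsilon^{-s}$. For the second term, $\sum_{j>r}g_j\le g_{r+1}^{1-s}\sum_{j>r}g_j^s\le\varepsilon^{1-s}C$. Thus $N_\varepsilon(E)\le C'\varepsilon^{-s}$ and $\overline{\dim}_BE\le s$. The main obstacle is the bookkeeping in the second step, namely verifying that the gaps exactly tile $I_0\setminus E$. This includes the end gaps $\xi_{\sigma,0}$ and $\xi_{\sigma,n_{k+1}}$, and the degenerate zero-length gaps must be handled so that removing $r$ gaps really leaves $O(r)$ components.
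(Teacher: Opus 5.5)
Your covering argument is the standard Tricot argument, and it is correct when $E$ is Lebesgue-null. The gap is your treatment of the case $|E|>0$, and it cannot be closed, because the statement as written is false there. The paper gives no proof of its own; it only cites Tricot.

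The total length of all gaps is $\sum_{k}(|E_k|-|E_{k+1}|)=|I_0|-|E|$. So after deleting the interiors of the $r$ largest gaps, the remaining at most $r+1$ closed intervals have total length $|E|+\sum_{j>r}g_j$, not $\sum_{j>r}g_j$. Your estimate therefore reads $N_\varepsilon(E)\le (r+1)+\varepsilon^{-1}\bigl(|E|+\sum_{j>r}g_j\bigr)$. The term $|E|/\varepsilon$ cannot be absorbed into $C\varepsilon^{-s}$ when $s<1$. Your claim that $\sum_j g_j^s<\infty$ for some $s<1$ forces $|E|=0$ is false. The inequality $\sum_j g_j\le g_1^{1-s}\sum_j g_j^s$ only bounds the total gap length from above; it says nothing about whether the gaps exhaust $I_0$.

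Here is a concrete counterexample. Take $n_k=2$ and $c_k=\frac{1}{2}(1-2^{-10k})$. In every $I_\sigma$, $\sigma\in D_k$, put the two children at the two ends, so $\xi_{\sigma,0}=\xi_{\sigma,2}=0$ and $\xi_{\sigma,1}=\delta_k-2\delta_{k+1}=2^{-10(k+1)}\delta_k$. Then $|E|=\lim_k N_k\delta_k=\prod_{k\ge1}(1-2^{-10k})>0$, hence $\overline{\dim}_BE=1$. On the other hand, since $\delta_k\le 2^{-k}$,
\[
\sum_{k=0}^{\infty}\sum_{\substack{\sigma\in D_k\\ 0\le i\le 2}}\xi_{\sigma,i}^{s}\le\sum_{k=0}^{\infty}2^{k}\bigl(2^{-10(k+1)}2^{-k}\bigr)^{s}\le\sum_{k=0}^{\infty}2^{k(1-11s)}<\infty\quad\text{for every } s>\frac{1}{11},
\]
so the right-hand side of the lemma is at most $1/11$.

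The lemma needs the hypothesis $|E|=0$, i.e.\ $N_k\delta_k=\prod_{i\le k}n_ic_i\to0$; this is the setting of the classical Tricot result. You should add it explicitly. With it, your steps go through as written: the $|E|$ term vanishes, $r=\#\{j:g_j\ge\varepsilon\}\le C\varepsilon^{-s}$, and $\sum_{j>r}g_j\le C\varepsilon^{1-s}$.

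The extra hypothesis costs nothing where the paper uses the lemma. In Lemma \ref{lem05}, (\ref{051}) gives $N_k\delta_k=N_{k+1}\delta_k/n_{k+1}<(\delta_k/n_{k+1})^{1+\epsilon-l}\to0$, because $l<1$. In Example \ref{exa3}, $N_k\delta_k=(n_1\cdots n_k)^{-1}\to0$.
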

\bigskip

To get the packing dimensions of the sets, we need to use  Lemma \ref{lem02}.
\begin{lem}\label{lem02}{\rm{(\cite{fk1990})}}
Let $E\subseteq \mathbb{R} $ be a bounded closed set, if $\overline{\dim}_{B}E= \overline{\dim}_{B}(V\cap E)$ for any open set $V\subseteq\mathbb{R}$ with $V \cap E\neq \emptyset$, then $\dim _{P}E=\overline{\dim}_{B}E$.
\end{lem}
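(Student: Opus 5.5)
The plan is to use the standard characterization of packing dimension through upper box dimension,
\begin{equation*}
\dim_P E=\inf\Big\{\sup_{i\ge1}\overline{\dim}_B E_i:\ E\subseteq \bigcup_{i=1}^{\infty}E_i\Big\},
\end{equation*}
which is valid for every $E\subseteq\mathbb{R}$. I would combine it with the Baire category theorem applied to the compact set $E$. The proof then splits into two inequalities.

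The inequality $\dim_P E\le\overline{\dim}_B E$ comes straight from the formula: take the trivial cover $E_1=E$.

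For the reverse inequality, fix an arbitrary countable cover $E\subseteq\bigcup_i E_i$. The goal is to show $\sup_i\overline{\dim}_B E_i\ge\overline{\dim}_B E$.
\begin{itemize}
\item[(1)] Since the upper box dimension of a set equals that of its closure, we may replace each $E_i$ by $\overline{E_i}$. We then have $E=\bigcup_i(\overline{E_i}\cap E)$, a countable union of closed subsets of $E$.
\item[(2)] $E$ is a bounded closed set, so it is a complete metric space. By the Baire category theorem, some $\overline{E_{i_0}}\cap E$ has nonempty interior relative to $E$. Hence there is an open set $V\subseteq\mathbb{R}$ with $\emptyset\neq V\cap E\subseteq\overline{E_{i_0}}$.
\item[(3)] By the hypothesis of the lemma and monotonicity of the upper box dimension,
\begin{equation*}
\overline{\dim}_B E=\overline{\dim}_B(V\cap E)\le\overline{\dim}_B\overline{E_{i_0}}=\overline{\dim}_B E_{i_0}\le\sup_i\overline{\dim}_B E_i .
\end{equation*}
\end{itemize}
Taking the infimum over all covers gives $\dim_P E\ge\overline{\dim}_B E$, and the two inequalities together give the lemma.

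The one step that needs care is (2). It relies on $E$ being closed, so that $E$ is complete and Baire applies, and on passing to closures, so that the pieces of the decomposition are closed sets. Everything else is monotonicity of $\overline{\dim}_B$ together with its invariance under taking closures.
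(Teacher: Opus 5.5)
Your proof is correct and follows the standard argument for this result in \cite{fk1990}. The paper gives no proof of its own and only cites that source, which uses the same ingredients: the characterization $\dim_P E=\inf\{\sup_i\overline{\dim}_B E_i : E\subseteq\bigcup_i E_i\}$, passage to closures, and Baire's theorem on the compact set $E$. Two tacit points are harmless: the covering sets can be taken bounded (replace $E_i$ by $E_i\cap E$), and one assumes $E\neq\emptyset$.
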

\bigskip

Let $E\in \mathcal{M}(I_{0},\left\{n_{k}\right\},\left\{c_{k}\right\})$ satisfy the conditions of Theorem \ref{thm1}. To get (\ref{031}), the proof of Theorem \ref{thm1} is divided into  three parts.
\subsection{The  upper bound of the upper box dimension}

Let $t$ be the parameter in (\ref{t}), we give the following lemma and  corollary  to get $\overline{\dim }_BE\le t$.

\begin{lem}\label{lem05}
Let $E\in \mathcal{M}(I_{0},\left\{n_{k}\right\},\left\{c_{k}\right\})$, then
\begin{equation*}
\overline{\dim }_BE\le \limsup\limits_{k \to\infty}  \frac{\log n_1n_2\cdots n_kn_{k+1}}{-\log\delta_{k}+\log n_{k+1}}.
\end{equation*}
\end{lem}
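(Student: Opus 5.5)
The plan is a direct covering estimate by intervals of a fixed scale; Lemma \ref{lem01} will not be needed. Fix $\rho$ with $0<\rho<\delta_1$ and choose $k\ge 1$ with $\delta_{k+1}\le\rho<\delta_k$. Let $N(\rho)$ denote the least number of intervals of length $\rho$ needed to cover $E$. We bound $N(\rho)$ in two ways.
\begin{itemize}
\item Covering by $(k+1)$-order basic intervals: each has length $\delta_{k+1}\le\rho$, so $N(\rho)\le N_{k+1}$.
\item Covering each $k$-order basic interval $I_\sigma$ (length $\delta_k$) directly by intervals of length $\rho$: this needs at most $\delta_k/\rho+1\le 2\delta_k/\rho$ intervals, so $N(\rho)\le 2N_k\delta_k/\rho$.
\end{itemize}
Hence
\[
N(\rho)\le 2N_k\min\Big\{n_{k+1},\frac{\delta_k}{\rho}\Big\}.
\]

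Next we compare with the scale $\rho_k:=\delta_k/n_{k+1}$. Note that $\rho_k\ge\delta_{k+1}$ because $n_{k+1}c_{k+1}<1$, but $\rho_k$ may lie below or above $\rho$. Split into two cases.
\begin{itemize}
\item If $\rho\le\rho_k$, use $N(\rho)\le 2N_{k+1}$ together with $-\log\rho\ge-\log\rho_k=-\log\delta_k+\log n_{k+1}$. This gives
\[
\frac{\log N(\rho)}{-\log\rho}\le\frac{\log 2+\log N_{k+1}}{-\log\delta_k+\log n_{k+1}}.
\]
\item If $\rho_k<\rho<\delta_k$, write $\rho=\delta_k/x$ with $1<x<n_{k+1}$. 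Then $N(\rho)\le 2N_kx$, so
\[
\frac{\log N(\rho)}{-\log\rho}\le\frac{\log 2+\log N_k+\log x}{-\log\delta_k+\log x}.
\]
\end{itemize}

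In the second case, view the right-hand side as a function $g(y)=(a+y)/(b+y)$ of $y=\log x\in(0,\log n_{k+1})$, where $a=\log 2+\log N_k$ and $b=-\log\delta_k>0$. Since $g$ is a Möbius function of $y$, it is monotone on the interval. Its supremum is therefore attained at an endpoint: either $y=\log n_{k+1}$, which returns the expression of the first case, or $y=0$, which gives $(\log 2+\log N_k)/(-\log\delta_k)$. The latter equals $(\log 2+\log N_k)/(-\log\delta_{k-1}+\log(1/c_k))$. Because $1/c_k>n_k$, this is at most the index-$(k-1)$ expression $(\log 2+\log N_k)/(-\log\delta_{k-1}+\log n_k)$.

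So in every case $\log N(\rho)/(-\log\rho)$ is bounded by $\max\{t_k,t_{k-1}\}$ up to a $\log 2$ error, where
\[
t_k:=\frac{\log N_{k+1}}{-\log\delta_k+\log n_{k+1}}.
\]
The $\log 2$ term vanishes in the limit because $-\log\delta_k\to\infty$, since $\delta_k\le 2^{-k}$. Letting $\rho\to0$, so that $k\to\infty$, gives $\overline{\dim}_BE\le\limsup_k t_k$, which is the claim of Lemma \ref{lem05}.

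The main obstacle is the intermediate regime $\delta_{k+1}\le\rho<\delta_k$. There neither the level-$k$ nor the level-$(k+1)$ count is sharp on its own, and the argument relies on the monotonicity of $g$ to reduce to the endpoint scales.
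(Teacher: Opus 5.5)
Your proof is correct, and it takes a genuinely different route from the paper.

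\textbf{The paper's route.} The paper never counts covers. It treats $\limsup_k t_k=1$ as trivial and otherwise fixes $l\in(\limsup_k t_k,1)$. Jensen's inequality on the $n_{k+1}+1$ gaps inside each $I_\sigma$, $\sigma\in D_k$, then shows that the $l$-th power sum of all level-$(k+1)$ gaps is at most $2N_{k+1}(\delta_k/n_{k+1})^{l}$. In effect, the average level-$(k+1)$ gap is at most $\delta_k/n_{k+1}$. This bound is summable in $k$, and the paper concludes with Tricot's gap-series criterion (Lemma \ref{lem01}).

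\textbf{Your route.} You bound $N(\rho)$ directly by the two elementary covers at levels $k$ and $k+1$. You identify $\delta_k/n_{k+1}$ as the scale where the two covers balance. You dispose of the intermediate scales through the monotonicity of $y\mapsto(a+y)/(b+y)$, and the endpoint $y=0$ is absorbed into $t_{k-1}$ via $1/c_k>n_k$.

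\textbf{What each buys.} The paper's argument is shorter once Lemma \ref{lem01} is granted, and the same gap-series computation is reused in Example \ref{exa3}. Yours is self-contained, needs no case split, and does not depend on the hypotheses behind Tricot's criterion. Those hypotheses matter: the inequality in Lemma \ref{lem01}, as stated, can fail for homogeneous Moran sets of positive Lebesgue measure. For example, take $n_k=2$, $c_k=\frac12(1-4^{-k})$ with only middle gaps. Then $\prod_k n_kc_k>0$, so $\overline{\dim}_BE=1$, yet the gap series converges for every $s>1/3$. The paper's proof avoids this only implicitly, because $\limsup_k t_k<1$ forces $N_k\delta_k\to0$. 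Your counting argument is valid with no such caveat.
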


\begin{proof}
If $ \limsup\limits_{k \to\infty}  \frac{\log n_1n_2\cdots n_kn_{k+1}}{-\log\delta_{k}+\log n_{k+1}}=1$, obviously, $\overline{\dim }_BE\le \limsup\limits_{k \to\infty}  \frac{\log n_1n_2\cdots n_kn_{k+1}}{-\log\delta_{k}+\log n_{k+1}}.$ Suppose that $ \limsup\limits_{k \to\infty}  \frac{\log n_1n_2\cdots n_kn_{k+1}}{-\log\delta_{k}+\log n_{k+1}}<1$, then we have that for any $1>l> \limsup\limits_{k \to\infty}  \frac{\log n_1n_2\cdots n_kn_{k+1}}{-\log\delta_{k}+\log n_{k+1}}$, there exist $\epsilon >0 $ and $k_0>0$ satisfying for any $k\ge k_0$, $$l> \frac{\log n_1n_2\cdots n_kn_{k+1}}{-\log\delta_{k}+\log n_{k+1}}+\epsilon,$$ which implies
\begin{equation}\label{051}
n_1n_2\cdots n_kn_{k+1}<(\frac{\delta _k}{n_{k+1}} )^{\epsilon -l} .
\end{equation}

For any $k\ge 0$ and $\sigma \in D_k$, by the Jensen inequality, we have
\begin{equation}\label{052}
\sum_{i=0}^{n_{k+1}} \xi _{\sigma ,i}^{l} \le (n_{k+1}+1)(\frac{1}{n_{k+1}+1} \sum_{i=0}^{n_{k+1}} \xi _{\sigma ,i})^{l}
= (n_{k+1}+1)[\frac{(1-n_{k+1}c_{k+1})\delta _k}{n_{k+1}+1} ]^{l} .
\end{equation}
Notice that $n_k\ge 2$  and $n_kc_k<1$ for any $k\ge 1$, then  $c_k<\frac{1}{2}$ and $\delta _k<2^{-k}$ for any $k\ge 1$. By (\ref{051}) and (\ref{052}), combing $n_k\ge 2$ for any $k\ge 1$, we have
\begin{equation}
\begin{aligned}
\sum_{k=k_0}^{\infty }\sum_{\substack {\sigma \in D_k\\0\le i\le n_{k+1}}}\xi _{\sigma ,i}^{l}
&\le \sum_{k=k_0}^{\infty }n_1n_2\cdots n_k(n_{k+1}+1) [\frac{(1-n_{k+1}c_{k+1})\delta _k}{n_{k+1}+1} ]^{l} \\
&\le 2\sum_{k=k_0}^{\infty }n_1n_2\cdots n_kn_{k+1} [\frac{(1-n_{k+1}c_{k+1})\delta _k}{n_{k+1}+1} ]^{l} \\
&\le 2\sum_{k=k_0}^{\infty }(\frac{\delta _k}{n_{k+1}} )^{\epsilon -l} (\frac{\delta _k}{n_{k+1}})^{l} \\
&\le 2\sum_{k=k_0}^{\infty }2^{-(k+1)\epsilon}\\
&<\infty. \nonumber
\end{aligned}
\end{equation}
Then by Lemma \ref{lem01}, we have $\overline{\dim }_BE\le l$. Notice that $l\in(\limsup\limits_{k \to\infty}  \frac{\log n_1n_2\cdots n_kn_{k+1}}{-\log\delta_{k}+\log n_{k+1}},1)$ is arbitrary, we obtain that $$\overline{\dim }_BE\le \limsup\limits_{k \to\infty}  \frac{\log n_1n_2\cdots n_kn_{k+1}}{-\log\delta_{k}+\log n_{k+1}}.$$

According to the first reconstruction of the homogeneous Moran sets, if $E\in \mathcal{M}(I_{0},\left\{n_{k}\right\},\left\{c_{k}\right\})$  satisfying the conditions of Theorem \ref{thm1}, then we have $E\in \mathcal{M}(I_{0}^*,\left\{n_{k}^*\right\},\left\{c_{k}^*\right\})$. Notice that $n_{k}^*=n_{k}$ for any $k\ge 1$, we have the following corollary by Lemma \ref{lem05}.

\begin{cor}\label{cor1}
Let $E\in \mathcal{M}(I_{0},\left\{n_{k}\right\},\left\{c_{k}\right\})$ satisfy the conditions of Theorem \ref{thm1}, then
\begin{equation*}
\overline{\dim }_BE\le \limsup\limits_{k \to\infty}  \frac{\log n_{1}^*n_{2}^*\cdots n_{k}^*n_{k+1}^*}{-\log\delta_{k}^*+\log n_{k+1}^*}=\limsup\limits_{k \to\infty}  \frac{\log n_1n_2\cdots n_kn_{k+1}}{-\log\delta_{k}^*+\log n_{k+1}}=t.
\end{equation*}
\end{cor}

\end{proof}

\subsection{The  lower bound of the upper box dimension}

For any $k\ge 1$, let $\{r_k\}$ be a sequence of positive numbers, which satisfies  $r_1=\frac{1-L_1-R_1}{n_1}$ and
\begin{equation}\label{053}
r_k=\frac{\delta_{k-1}-L_k-R_k}{n_k}=\frac{\delta_{k-1}^{*}}{n_k}
\end{equation}
for any $k\ge 2$. Obviously, $r_k\to 0$ as $k\to \infty$.

 Denote by $N_{\epsilon } (E)$ the minimal number of closed balls with diameter $\epsilon $ needed to cover $E$.  Next, we discuss separately according to the three conditions (A), (B) and (C) of Theorem \ref{thm1} to estimate $\frac{\log{N_{r_k } (E)} }{-\log {r_k}} $.

\textbf{Case 1:} Assume that condition (A) of Theorem \ref{thm1} is fulfilled.
Under this assumption, $\overline\nu_k\le w_1 \delta_k$ for any $k\ge 1$. Suppose that $B_k$ is the largest number of $k$-order basic intervals of $E$ which are intersected by a closed ball of diameter $r_k$ for any $k\ge 1$. Then
\begin{equation}
\begin{aligned}
(B_k-2)\delta _k\le r_k
&=\frac{\delta _{k-1}-L_k-R_k }{n_k} \\
&\le \frac{n_k\delta _k+(n_k-1)\overline\nu_k  }{n_k}\\
&\le \frac{n_k\delta _k+(n_k-1)w_1\delta _k  }{n_k}\\
&\le (w_1+1)\delta_k, \nonumber
\end{aligned}
\end{equation}
which implies $B_k\le w_1+3$. By (\ref{053}), we have
\begin{equation}\label{054}
\frac{\log{N_{r_k}(E)} }{-\log r_k} \ge \frac{\log {\frac{1}{2B_k}n_1n_2\cdots n_k }}{-\log r_k}
\ge \frac{\log {n_1n_2\cdots n_k}-\log {(2w_1+6)} }{-\log{\frac{\delta _{k-1}^{*} }{n_k} } } .
\end{equation}

\textbf{Case 2:} Assume that condition (B) of Theorem \ref{thm1} is fulfilled.
Under this assumption, $\overline\nu_k\le w_2 \underline{\nu }_k $ for any $k\ge 1$. This case can be divided into two subcases.

\textbf{(a)} $\overline\nu_k\le w_2 \underline{\nu }_k $ and $\underline{\nu }_k \le \delta_k $ for any $k\ge 1$. In this case, we obtain $\overline\nu_k\le w_2\delta_k$. Similarly to the proof of Case 1, we obtain
\begin{equation}\label{055}
\frac{\log{N_{r_k}(E)} }{-\log r_k}
\ge \frac{\log {n_1n_2\cdots n_k}-\log {(2w_2+6)} }{-\log{\frac{\delta _{k-1}^{*} }{n_k} } } .
\end{equation}

\textbf{(b)} $\overline\nu_k\le w_2 \underline{\nu }_k $ and $\underline{\nu }_k > \delta_k $ for any $k\ge 1$. In this case,
\begin{equation}
\begin{aligned}
\delta_{k-1}-L_k-R_k&\le n_k\delta_k+(n_k-1)\overline\nu_k\\
&\le n_k\underline{\nu }_k+ n_kw_2\underline{\nu }_k\\
&=(w_2+1)n_k\underline{\nu }_k.\nonumber
\end{aligned}
\end{equation}
By (\ref{053}), we obtain $$\frac{1}{2} \underline{\nu }_k\ge \frac{1}{2(w_2+1)} \cdot \frac{\delta_{k-1}-L_k-R_k}{n_k} =\frac{1}{2(w_2+1)}r_k.$$
Then the number of the $k$-order basic intervals of $E$ that a closed ball of diameter $\frac{1}{2(w_2+1)}r_k$ can intersect is at most 2, which yields that the number of the $k$-order basic intervals of $E$ that a closed ball of diameter $r_k$ can intersect is at most $2[2(w_2+1)+1]=2(2w_2+3):=Q_k$. Thus, we have
\begin{equation}\label{056}
\frac{\log{N_{r_k}(E)} }{-\log r_k}\ge \frac{\log \frac{1}{2Q_k}n_1n_2\cdots n_k  }{-\log r_k}
\ge \frac{\log {n_1n_2\cdots n_k}-\log {4(2w_2+3)} }{-\log{\frac{\delta _{k-1}^{*} }{n_k} } } .
\end{equation}

\textbf{Case 3:} Assume that condition (C) of Theorem \ref{thm1} is fulfilled.
Under this assumption, $n_k\underline{\nu }_k\ge w_3\delta_{k-1}$ for any $k\ge 1$. By (\ref{053}), we obtain
$$\frac{1}{2}\underline{\nu }_k\ge \frac{w_3}{2}\cdot \frac{\delta _{k-1}}{n_k}\ge \frac{w_3}{2}\cdot \frac{\delta _{k-1}-L_k-R_k}{n_k}=  \frac{w_3}{2}r_k.$$
Similarly to the proof of Case 2(b), we obtain
\begin{equation}\label{057}
\frac{\log{N_{r_k}(E)} }{-\log r_k}
\ge \frac{\log {n_1n_2\cdots n_k}-\log {4(\frac{2}{w_3} +1)} }{-\log{\frac{\delta _{k-1}^{*} }{n_k} } } .
\end{equation}

Since $r_k\to 0$ as $k\to \infty$, by (\ref{054}), (\ref{055}), (\ref{056}) and (\ref{057}), we can obtain that
\begin{align*}
\overline{\dim }_BE&=\limsup\limits _{\epsilon \to 0}\frac{\log{N_{\epsilon }(E)} }{-\log \epsilon }\\
&\ge \limsup\limits _{k \to \infty }\frac{\log{N_{r_k }(E)} }{-\log r_k }\\
&\ge \limsup\limits _{k \to \infty }\frac{\log{n_1n_2\cdots n_kn_{k+1}} }{-\log \frac{\delta _{k}^{*} }{n_{k+1}}  }\\
&=t.
\end{align*}

Combining Corollary \ref{cor1}, we have \begin{equation}\label{000}
\overline{\dim }_BE=t=\limsup\limits_{k \to\infty}  \frac{\log n_1n_2\cdots n_kn_{k+1}}{-\log(\delta_k-L_{k+1}-R_{k+1})+\log n_{k+1}}. \end{equation}

\subsection{The estimation of the packing dimension}

Let $V\subseteq\mathbb{R}$  be  an open set with $V \cap E\neq \emptyset$, then for any $x\in V\bigcap E$, there exist a real number $r>0$, a positive integer $k_0$ and $\sigma_0\in D_{k_0}$, such that
\begin{equation}\label{058}
x\in I_{\sigma _0}\subset B(x,r)\subset V .
\end{equation}

For any $\sigma_1,\sigma_2 \in D_{k_0}$,  It is easy to obtain that $I_{\sigma _1}\cap E\in \mathcal{M}(I_{\sigma _1},\left\{n_{k}\right\},\left\{c_{k}\right\})$ and $I_{\sigma _2}\cap E\in \mathcal{M}(I_{\sigma _2},\left\{n_{k}\right\},\left\{c_{k}\right\})$ with $k\ge k_0+1$.  It is also easy to obtain that $I_{\sigma _1}\cap E$ and $I_{\sigma _2}\cap E$ satisfy the conditions of Theorem \ref{thm1}. Then by (\ref{000}), we have
\begin{align*}
\overline{\dim }_B (I_{\sigma _1}\cap E)&=\overline{\dim }_B (I_{\sigma _2}\cap E)\\
&=\limsup\limits_{k\to \infty }\frac{\log{n_{k_0+1}n_{k_0+2}\cdots n_kn_{k+1}} }{-\log (c_{k_0+1}c_{k_0+2}\cdots c_k-L_{k+1}-R_{k+1})+\log n_{k+1}} ,
\end{align*}
 Notice that $\sigma_1$ and $\sigma_2$ are arbitrary, then we have
\begin{equation}\label{059}
\max\limits_{\sigma \in D_{k_0}}\overline{\dim }_B (I_{\sigma}\cap E)=\overline{\dim }_B (I_{\sigma _0}\cap E).
\end{equation}

By (\ref{058}), (\ref{059}) and the finite stability and monotonicity of the upper box dimension, we have
\begin{align*}
\overline{\dim }_BE&= \overline{\dim }_B[  \bigcup_{\sigma \in D_{k_0}} (I_{\sigma}\cap E)]\\
&=\max\limits_{\sigma \in D_{k_0}}\overline{\dim }_B(I_{\sigma}\cap E)\\
&=\overline{\dim }_B(I_{\sigma _0}\cap E)\\
&\le \overline{\dim }_B(V\cap E).
\end{align*}
By  the monotonicity of the upper box dimension, $\overline{\dim }_BE\ge \overline{\dim }_B(V\cap E)$, thus $\overline{\dim }_BE=\overline{\dim }_B(V\cap E)$. Therefore by Lemma \ref{lem02}, we have $\dim_PE=\overline{\dim }_BE=t=\limsup\limits_{k \to\infty}  \frac{\log n_1n_2\cdots n_kn_{k+1}}{-\log(\delta_k-L_{k+1}-R_{k+1})+\log n_{k+1}}$ and complete the proof of Theorem \ref{thm1}.

\bigskip

\section{The  quasisymmetric  packing-minimalities of the homogeneous Moran sets}
We begin to prove Theorem \ref{thm2},  we need the following lemmas.
\subsection{Some Lemmas}
We  use the mass distribution principle  to estimate the lower bounds of the packing dimensions of the quasisymmetric image sets.
\begin{lem}\label{lem03}\textmd{\rm (Mass distribution principle \rm{\cite{wen00}})}
Let $s\ge0$ and $\mu$ be a positive and finite Borel measure  on  $E\subseteq\mathbb{R}${\rm(}we call that $\mu$ is a mass distribution  on $E${\rm)}. If there exists $c>0$, such that for any $x\in E$, $$\liminf \limits _{r\to 0}\frac{\mu (B(x,r))}{r^s} \le c,$$ then $\dim_{P}E\ge s$.
\end{lem}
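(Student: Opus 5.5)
This lemma is the mass distribution principle in its packing form, quoted from \cite{wen00}. We would prove it by comparing $\mu$ with the packing measure $\mathcal{P}^s$ through a density argument. Suppose, for contradiction, that $\dim_P E<s$. Then $\mathcal{P}^s(E)=0$. Since $\mathcal{P}^s$ is the regularization of the packing premeasure $P^s_0$, we can write $E\subseteq\bigcup_i E_i$ with $\sum_i P^s_0(E_i)$ as small as we like. We will show instead that every $F\subseteq E$ satisfies $P^s_0(F)\ge c^{-1}\mu(F)$, up to harmless normalization constants. This gives $\mathcal{P}^s(F)\ge c'\mu(F)$ for Borel $F$, hence $\mathcal{P}^s(E)\ge c'\mu(E)>0$ and $\dim_P E\ge s$, a contradiction.

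The core step is a Vitali-type covering argument. Fix $F\subseteq E$ and $\delta>0$, and replace $c$ by $c+\varepsilon$. By hypothesis, each $x\in F$ admits arbitrarily small radii $r<\delta$ with $\mu(B(x,r))\le (c+\varepsilon) r^s$. These balls form a fine cover of $F$. To pass from the cover to a packing, we use that $\mu$ is a finite Borel measure on $\mathbb{R}$, so the Besicovitch covering theorem (or the Vitali covering theorem for Radon measures) applies. It yields a countable disjoint subfamily $\{B(x_i,r_i)\}$ with $x_i\in F$ that covers $\mu$-almost all of $F$; in $\mathbb{R}$ the Besicovitch constant is absolute, so at worst a bounded number of disjoint subfamilies is needed. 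That subfamily is a $\delta$-packing of $F$, so
\[
P^s_\delta(F)\ge\sum_i (2r_i)^s\ge \frac{2^s}{c+\varepsilon}\sum_i\mu(B(x_i,r_i))\ge \frac{2^s}{c+\varepsilon}\,\mu(F),
\]
possibly divided by the Besicovitch constant. Letting $\delta\to0$ and $\varepsilon\to0$ gives the premeasure bound.

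To pass from the premeasure to the measure, we apply the premeasure bound to each $E_i\cap E$ in a countable decomposition. Countable subadditivity of $\mu$ then gives $\sum_i P^s_0(E_i)\ge C\mu(E)$, so $\mathcal{P}^s(E)\ge C\mu(E)>0$. Measurability is not an issue, since only outer measure is used and $\mu$ can be taken as an outer measure.

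The main obstacle is the covering step. The hypothesis is a $\liminf$, which only supplies good radii along a sequence, not all small radii; this is exactly why a Vitali or Besicovitch fine-cover theorem is needed rather than a simple greedy selection. Extracting a disjoint subfamily that captures $\mu$-almost all of $F$ requires that $\mu$ be Radon, which holds here because $\mu$ is a finite Borel measure on $\mathbb{R}$.
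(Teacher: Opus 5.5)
The paper does not prove this lemma; it quotes it from \cite{wen00} and uses it as a black box in Section~6, so there is no argument of the paper's to compare yours with. Your proof is correct, and it is the standard proof of the packing half of the mass distribution principle. The $\liminf$ hypothesis gives, for every $x\in F$, arbitrarily small $r$ with $\mu(B(x,r))\le (c+\varepsilon)r^s$. A covering theorem turns this fine family into a $\delta$-packing of $F$ carrying a fixed proportion of $\mu(F)$, which gives $P^s_0(F)\ge C\mu(F)$ for every $F\subseteq E$. Monotonicity of $P^s_0$ and countable subadditivity of (outer) $\mu$ then pass this to $\mathcal{P}^s(E)\ge C\mu(E)$.

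A few points to tighten:
\begin{itemize}
\item You blur two covering theorems; either works, but say which you use.
  \begin{itemize}
  \item The Besicovitch covering theorem gives a bounded number $\xi$ of disjoint subfamilies covering all of $F$. You then choose one whose balls carry total mass at least $\mu(F)/\xi$.
  \item The Besicovitch--Vitali theorem for Radon measures gives a single disjoint subfamily covering $\mu$-almost all of $F$, with no constant lost.
  \end{itemize}
\item The standard statements of these theorems are for closed balls. If $B(x,r)$ denotes an open ball, shrink the radii slightly, replacing $c+\varepsilon$ by $c+2\varepsilon$.
\item The conclusion genuinely needs $\mu(E)>0$, which is what ``positive'' must mean here.
\item The passage from $\mathcal{P}^s(E)>0$ to $\dim_P E\ge s$ should be justified. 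For $t<s$, one has $P^t_\delta\ge(2\delta)^{t-s}P^s_\delta$. So $P^t_0(E_i)=\infty$ whenever $P^s_0(E_i)>0$, and hence $\mathcal{P}^t(E)=\infty$ for all $t<s$.
\item The proof by contradiction is superfluous, since you establish $\mathcal{P}^s(E)\ge C\mu(E)$ directly.
\end{itemize}
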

\bigskip

For any interval $I\subseteq\mathbb{R}$ and $\rho>0$, denote by $\rho I$   the interval concentric with $I$ of length $\rho \left|I\right|$.  Then we state the following lemma.
\begin{lem}\label{lem04}{\rm{(\cite{wjm93})}}
Let $f:\mathbb{R}\to \mathbb{R}$ be a \text{\rm1}-dimensional quasisymmetric mapping, then
for every pair of intervals $I'$ and $I$ satisfying $I'\subseteq I$,
 there exist $\lambda>0$, $K_{\rho}>0$ and $0<p\le 1\le q$ such that
    $$\lambda(\frac{|I^{'}|}{\left|I\right|})^{q}\le \frac{|f(I^{'})|}{\left|f(I)\right|}\le 4(\frac{|I^{'}|}{\left|I\right|})^{p},\quad
\frac{\left|f(\rho I)\right|}{\left|f(I)\right|}\le K_{\rho}.$$
\end{lem}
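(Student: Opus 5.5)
The statement is Lemma~\ref{lem04}, cited from \cite{wjm93}. My plan is to derive it directly from the definition of quasisymmetry with control function $\eta$, using a dyadic subdivision argument. First I record two preliminary facts. A quasisymmetric $f:\mathbb{R}\to\mathbb{R}$ is a homeomorphism, hence strictly monotone, so for any interval $J=[a,b]$ we have $|f(J)|=|f(b)-f(a)|$. Next, if $m$ is the midpoint of $[a,b]$, then taking $x=m$ in the definition with the roles of $a,b$ swapped gives
$$\eta(1)^{-1}\le \frac{|f(m)-f(a)|}{|f(b)-f(m)|}\le \eta(1),$$
which forces $\eta(1)\ge 1$. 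Writing $\theta=\frac{\eta(1)}{1+\eta(1)}\in[\tfrac12,1)$, each half $H$ of $I$ therefore satisfies
$$\frac{1}{1+\eta(1)}|f(I)|\le |f(H)|\le \theta |f(I)|.$$

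For the upper bound I iterate this. Every $n$-th generation dyadic subinterval $J$ of $I$ satisfies $|f(J)|\le \theta^n|f(I)|$. Given $I'\subseteq I$, choose $n\ge 0$ with $2^{-n-1}<|I'|/|I|\le 2^{-n}$. Then $I'$ is covered by at most two adjacent dyadic intervals of generation $n$, so
$$|f(I')|\le 2\theta^n|f(I)|.$$
Set $p=\log_2(1/\theta)\in(0,1]$, so that $\theta^n=2^{-np}$. Since $2^{-n}<2|I'|/|I|$, we get $2^{-np}\le 2^p(|I'|/|I|)^p\le 2(|I'|/|I|)^p$, and hence $|f(I')|\le 4(|I'|/|I|)^p|f(I)|$. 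This is exactly the constant $4$ in the statement. For the lower bound, the same choice of $n$ shows that $I'$ contains a whole dyadic interval of generation $n+2$. Iterating the lower half-estimate gives
$$|f(I')|\ge (1+\eta(1))^{-(n+2)}|f(I)|.$$
Set $q=\log_2(1+\eta(1))\ge 1$, using $\eta(1)\ge1$. Then $(1+\eta(1))^{-n}=2^{-nq}\ge (|I'|/|I|)^q$, which yields the left inequality with $\lambda=(1+\eta(1))^{-2}$.

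For the enlargement bound, the case $\rho\le 1$ is trivial, since then $\rho I\subseteq I$ and $K_\rho=1$ works by monotonicity. For $\rho>1$, let $x$ be the center of $I$, $a,b$ its endpoints, and $a',b'$ the endpoints of $\rho I$. Applying the quasisymmetry inequality to the triples $(x,a',a)$ and $(x,b',b)$ gives
$$|f(a')-f(x)|\le \eta(\rho)\,|f(a)-f(x)|\le \eta(\rho)\,|f(I)|,$$
and likewise on the right. Adding the two bounds gives $|f(\rho I)|\le 2\eta(\rho)|f(I)|$, so $K_\rho=2\eta(\rho)$.

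The only delicate step is the dyadic covering and packing bookkeeping. One must check that an interval of relative length at most $2^{-n}$ meets at most two generation-$n$ dyadic pieces, and that an interval of relative length greater than $2^{-n-1}$ contains a full generation-$(n+2)$ piece. The remaining work is tracking the constants so that the upper estimate comes out with the factor $4$ and with $p\le 1\le q$.
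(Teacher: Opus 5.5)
Your proof is correct. Note that the paper gives no argument of its own here: the lemma is simply quoted from \cite{wjm93}. The usual route behind that citation is measure-theoretic: an increasing homeomorphism $f$ of $\mathbb{R}$ is quasisymmetric exactly when the Lebesgue--Stieltjes measure $df$ is doubling, and one then iterates the doubling inequality on dyadic intervals. Your argument performs the same dyadic iteration directly from the three-point condition, so it is self-contained and avoids the measure-theoretic detour.

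The steps all check out:
\begin{itemize}
\item the midpoint estimate $\frac{1}{1+\eta(1)}\le |f(H)|/|f(I)|\le \frac{\eta(1)}{1+\eta(1)}$;
\item the covering of $I'$ by at most two generation-$n$ pieces;
\item the fact that $I'$ contains a full generation-$(n+2)$ piece.
\end{itemize}
They give the explicit constants $p=\log_2\frac{1+\eta(1)}{\eta(1)}$, $q=\log_2(1+\eta(1))$ and $\lambda=(1+\eta(1))^{-2}$. The bound $2^{p}\le 2$ accounts exactly for the factor $4$, and $\eta(1)\ge 1$ gives $p\le 1\le q$.

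A real advantage of your version is that these constants visibly depend only on $\eta$ (and on $\rho$ for $K_\rho$). They are therefore uniform over all pairs $I'\subseteq I$. This uniformity is what the paper actually uses in Proposition~\ref{pro1} and in the proof of Theorem~\ref{thm2}, even though the statement's quantifier order literally allows the constants to depend on the pair.

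Two minor remarks. First, set aside the trivial case $|I'|=0$ before choosing $n$. Second, since $f$ is monotone, $|f(a)-f(x)|+|f(x)-f(b)|=|f(I)|$, so adding your two enlargement estimates already gives $K_\rho=\eta(\rho)$ rather than $2\eta(\rho)$.
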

Lemma \ref{lem04} shows the relationship  between the lengths of the image sets and those of the original sets under quasisymmetric mappings, which is essential in the proof of the quasisymmetric minimality.

\subsection{The second reconstruction of the homogeneous Moran sets}\label{sc}
To prove Theorem \ref{thm2}, we need do the second equivalent reconstruction to the homogeneous Moran sets based on the first reconstruction in Section 4, as the following lemma.

\begin{lem}\label{lem06}
	 Let $E\in \mathcal{M}(I_{0},\left\{n_{k}\right\},\left\{c_{k}\right\})$ satisfy the conditions of Theorem {\rm\ref{thm2}}, $E(I^*_{0},\\\{n^*_{k}\}, \{c^*_{k}\})$ be the first reconstructed version of $E$. Then there exists a sequence $\{S_{m}\}_{m\ge 0}$,  which  consists of some closed sets  of decreasing length, such that $E=\cap_{k\ge0}E_{k}=\cap_{k\ge0}E_{k}^{*}=\cap_{m\ge0}S_{m}$. Moreover, $\{S_{m}\}_{m\ge 0}$ fulfills the four conditions below:
\begin{enumerate}
  \item[\textup{(1)}] For any $m\ge0$, $S_{m}$ is a union of finite closed intervals,   where  the finite closed intervals have disjoint interiors, which are called the basic intervals of $S_{m}$. Write $\mathcal{S}_{m}=\{B: B ~~~~\text{is a basic interval of}~~~~ S_{m}\}$;
  \item[\textup{(2)}] $\left\{E_{k}^{*}\right\}_{k\ge0}$ is a subsequence of $\left\{S_{m}\right\}_{m\ge 0}$, and there exists $\{m_k\}_{k\ge 0}$ which is an increasing sequence of positive integers such that $S_{m_{k}}=E_{k}^{*}$ for any $k\ge 0$;

  \item[\textup{(3)}] Let $w_1$, $w_2$ be the constants in Theorem {\rm\ref{thm2}}. If condition {\rm(A)} of Theorem {\rm\ref{thm2}} holds, then there is a constant $T \in \mathbb{Z}^{+}$ with $T>2(1+w_1)$ satisfying each basic interval of $S_{m-1}$ contains at most $T^{2}$ basic intervals of $S_{m}$ for any $m\ge1$; if condition {\rm(B)} of Theorem {\rm\ref{thm2}} holds, then there is a constant $T \in \mathbb{Z}^{+}$ with $T>2w_2$ satisfying each basic interval of $S_{m-1}$ contains at most $T^{2}$ basic intervals of $S_{m}$ for any $m\ge1$ ;
		\item[\textup{(4)}]If condition {\rm(A)} of Theorem {\rm\ref{thm2}} holds, then we have $\max_{I\in \mathcal{S}_m}\left | I \right |\le 2(w_1+1)\min_{I\in \mathcal{S}_m}\left | I \right |$ for any $m\ge0$; if condition {\rm(B)} of Theorem {\rm\ref{thm2}} holds, then
$\max_{I\in \mathcal{S}_m}\left | I \right |\le 2w_2\min_{I\in \mathcal{S}_m}\left | I \right |$ for any $m\ge0$.
\end{enumerate}
\end{lem}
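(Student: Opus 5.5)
The plan is to insert finitely many intermediate levels between consecutive first reconstructed levels $E_k^*$ and $E_{k+1}^*$. Each new level is obtained by grouping consecutive children of every $I^*_\sigma$ ($\sigma\in D_k$) into blocks of nearly equal size. We use the same grouping pattern for every $\sigma\in D_k$, which is legitimate because all $\sigma$ have the same number $n_{k+1}$ of children.

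\textbf{Step 1 (the grouping).} Fix $k\ge0$ and write $n=n_{k+1}$. Choose an integer $T$ with $T>2(1+w_1)$ under (A), or $T>2w_2$ under (B); we may assume $w_2\ge1$ and $T\ge2$. Let $p\ge0$ be the largest integer with $T^{p}\le n/2$; if no $p\ge1$ qualifies, we insert no intermediate level. Starting from the single block $\{1,\dots,n\}$, we recursively split each block of consecutive indices into $T$ consecutive sub-blocks whose sizes differ by at most one. After $j$ splittings ($1\le j\le p$), every block has size $\lfloor n/T^j\rfloor$ or $\lceil n/T^j\rceil$. By the choice of $p$, all these sizes are at least $2$, and any two of them differ by a factor at most $2$.

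For a block $\beta$ and $\sigma\in D_k$, let $J_{\sigma,\beta}$ be the convex hull of $\bigcup_{l\in\beta}I^*_{\sigma*l}$. We define $S_{m_k}=E_k^*$, then $S_{m_k+j}=\bigcup_{\sigma,\beta}J_{\sigma,\beta}$ over the level-$j$ blocks, and $S_{m_k+p+1}=E^*_{k+1}$, so that $m_{k+1}=m_k+p+1$.

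\textbf{Step 2 (properties (1)--(3)).} Nestedness, disjoint interiors, $S_{m_k}=E_k^*$ and $\bigcap_m S_m=\bigcap_k E_k^*=E$ are immediate from the construction. For (3), consider each transition:
\begin{itemize}
\item $E_k^*\to$ level $1$ and level $j-1\to j$: each basic interval contains exactly $T$ new intervals, and $T\le T^2$.
\item Level $p\to E^*_{k+1}$: each block has size at most $\lceil n/T^p\rceil$. Maximality of $p$ gives $T^{p+1}>n/2$, hence this size is at most $2T+1\le T^2$.
\item No intermediate level ($p=0$): we have $n<2T$, so again at most $T^2$ children.
\end{itemize}

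\textbf{Step 3 (property (4), the main obstacle).} At the levels $S_{m_k}$, all basic intervals have the same length $\delta_k^*$, so only the intermediate levels need work. A block of size $b\ge2$ has length
\begin{equation*}
|J_{\sigma,\beta}|=b\delta^*_{k+1}+\sum_{l}\xi^*_{\sigma,l},
\end{equation*}
where the sum runs over the $b-1$ interior gaps, and by (\ref{042}) each of these equals $\xi_{\sigma,l}+L_{k+2}+R_{k+2}$.

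Under (B), using (\ref{045}) and (\ref{046}) with $w_2\ge1$ gives $\overline\nu^*_{k+1}\le w_2\underline\nu^*_{k+1}$. The length of $J_{\sigma,\beta}$ is therefore sandwiched between $b\delta^*_{k+1}+(b-1)\underline\nu^*_{k+1}$ and $b\delta^*_{k+1}+(b-1)w_2\underline\nu^*_{k+1}$. Combined with $b\le2b'$ for any two block sizes $b,b'$ at the same level, this yields a ratio of order $2w_2$.

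Under (A), the upper bound is
\begin{equation*}
|J_{\sigma,\beta}|\le b(\delta^*_{k+1}+L_{k+2}+R_{k+2})+(b-1)w_1\delta_{k+1}\le b(1+w_1)\delta_{k+1}.
\end{equation*}
For the lower bound, each interior gap contributes at least $L_{k+2}+R_{k+2}$, so
\begin{equation*}
|J_{\sigma,\beta}|\ge (b-1)\delta_{k+1}\ge \tfrac b2\,\delta_{k+1}.
\end{equation*}
This is exactly where we need $b\ge2$: a single child has length only $\delta^*_{k+1}$, which may be much smaller than $\delta_{k+1}$. That is why the singleton blocks appear only at the level $E^*_{k+1}$, where all intervals coincide in length.

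I expect the delicate point to be tuning these constants, namely the choice of $p$ and the nearly-equal splitting, so that the ratios come out as exactly $2(w_1+1)$ and $2w_2$. If the crude bounds above give slightly larger constants, I would first try sharpening the splitting so that all blocks at a given level have sizes $b$ and $b+1$ with $b\ge2$, which gives the ratio $(b+1)/b\le 3/2$ in place of $2$.
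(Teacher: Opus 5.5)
Your construction is in substance the paper's. The paper also splits the children of each $I^*_\sigma$ recursively into $T$ consecutive blocks whose sizes differ by at most one (via the base-$T$ digits of $n^*_k$), uses the same pattern for every $\sigma$, and proves (4) by comparing a largest block with a smallest one. The only structural difference is the stopping rule. The paper takes $T^{i_k}\le n^*_k<T^{i_k+1}$, so every intermediate block contains $\underline{M}\ge T\ge 3$ children. Your rule $T^p\le n/2<T^{p+1}$ allows intermediate blocks with only $2$ children. Your verification of (1)--(3) is fine. Property (4) under (B) is also fine, provided you compare a block of size $b+1$ with one of size $b\ge 2$, which your splitting guarantees; this gives the ratio bound $\max\{(b+1)/b,\,bw_2/(b-1)\}\le 2w_2$. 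Using only $b\le 2b'$, as you wrote, gives $3w_2$ when $b'=2$.

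The real gap is (4) under (A), where your bounds do not reach the stated constant $2(w_1+1)$.
\begin{itemize}
\item Your bounds $|J_{\sigma,\beta}|\le b(1+w_1)\delta_{k+1}$ and $|J_{\sigma,\beta}|\ge (b-1)\delta_{k+1}$ only give $\max/\min\le (b+1)(1+w_1)/(b-1)$. This equals $3(1+w_1)$ when the smallest block has $b=2$ children, which your stopping rule allows.
\item The remedy you suggest does not help, because sizes $b$ and $b+1$ are already in force. The loss comes from absorbing $\delta^*_{k+1}$ and $y:=L_{k+2}+R_{k+2}$ into $\delta_{k+1}=\delta^*_{k+1}+y$, and then dropping one $\delta^*_{k+1}$ in the lower bound.
\item The paper keeps the two quantities separate. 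A largest block has length at most $(b+1+bw_1)\delta^*_{k+1}+b(1+w_1)y$, and a smallest block at least $b\delta^*_{k+1}+(b-1)y$. Comparing coefficients gives exactly $2(1+w_1)$: $b+1+bw_1\le 2(1+w_1)b$, and $b(1+w_1)\le 2(1+w_1)(b-1)$ because $b\ge 2$. This is the paper's step $(w_1+1)[(\underline{M}+1)\delta^*_k+\underline{M}(L_{k+1}+R_{k+1})]\le 2(w_1+1)[\underline{M}\delta^*_k+(\underline{M}-1)\underline{\nu}^*_k]$.
\item Alternatively, raise the threshold so that every intermediate block has at least $3$ children; then your crude ratio $(b+1)/(b-1)\le 2$ already suffices. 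The paper's rule gives at least $T\ge 3$ children. Stopping at the largest $p$ with $T^p\le n/3$ also works, since the last step then has at most $3T\le T^2$ children.
\end{itemize}
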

\begin{proof}

We begin with the construction  of $\{S_{m}\}_{m\ge 0}$.

Let $T=\min\{x: x>2(1+w_1), x\in \mathbb{Z}^{+}\}$ if the conditon (A)  of Theorem {\rm\ref{thm2}} holds, and let $T=\min\{x: x>2w_2, x\in \mathbb{Z}^{+}\}$ if the conditon (B)  of Theorem {\rm\ref{thm2}} holds. For any $k\ge 1$, Let $i_{k}$ be a positive integer satisfying :
if $2\le n_{k}^{*}<T$, then $i_{k}=1$;
if $n_{k}^{*}\ge T$, then $i_{k}$ fulfills $T^{i_{k}}\le n_{k}^{*}<T^{i_{k}+1}$.
Define $m_{0}=0,\ m_{k}=\sum_{j=1}^{k}i_{j}$, then $m_k=m_{k-1}+i_k.$
	
	For any $k\ge 0$, let $S_{m_{k}}=E_{k}^{*}$, which implies $\mathcal{S}_{m_{k}}=\{I_{\sigma}^{*}:\sigma\in D_{k}\}$, that is,  all of the $k$-order first reconstructed basic intervals of $E$ constitute $S_{m_{k}}$.
Next, for any $k\ge 1$ and $m_{k-1}<m<m_{k}$, we construct $S_{m}$ .

\begin{enumerate}
		\item[\textup{(1)}] If $2\le n_{k}^{*}< T^{2}$, then $i_{k}=1$ and $m_k=m_{k-1}+1$, there exists no integer $m$ with $m_{k-1}< m<m_k$.
		\item[\textup{(2)}] If $n_{k}^{*}\ge T^{2}$, then $i_{k}\ge 2$, and there exist $a_{i_k}\in\left\{1,2,\cdots,T-1\right\}$ and $a_{j}\in\left\{0,1,\cdots,T-1\right\}$ for any $j\in\left\{0,1,\cdots,i_{k}-1\right\}$, satisfying
\begin{equation*}
		n_{k}^{*}=a_{0}+a_{1}T+a_{2}T^{2}+\cdots+a_{i_{k}-1}T^{i_{k}-1}+a_{i_k}T^{i_{k}}.
\end{equation*}
\end{enumerate}
	
For any $k\ge1$ and $\sigma\in D_{k-1}$, notice that  $S_{m_{k-1}}=E_{k-1}^{*}$, then $S_{m_{k-1}}$ has $N_{k-1}^{*}$ basic intervals,  and there are $n_{k}^{*}$ $k$-order first reconstructed basic intervals of $E$ contained in  every $I_{\sigma}^{*}\in \mathcal{S}_{m_{k-1}}$, denoted by $I_{\sigma*1}^{*},I_{\sigma*2}^{*},\cdots,I_{\sigma*n_{k}^{*}}^{*}$  from left to right.

Next, for any  $1\le i\le i_k-1$, we construct $S_{m_{k-1}+i}$.

 Given $l$ closed intervals $I_{1},I_{2},\cdots,I_{l}$, we write $[I_{1},I_{2},\cdots,I_{l}]$ for the minimal closed interval containing all of $I_{1},I_{2},\cdots,I_{l}$.

\begin{enumerate}
\item[\textup{(I)}] For any $I_{\sigma}^{*}\in \mathcal{S}_{m_{k-1}}$,
let $l_{1}=a_{1}+a_{2}T+\cdots+a_{i_{k}-1}T^{i_{k}-2}+a_{i_k}T^{i_{k}-1}$, then $n^*_k=Tl_1+a_0=a_0(l_1+1)+(T-a_0)l_1$.
We define $T$ closed subintervals of $I_{\sigma}^{*}$, which are denoted by $I_{1}^{\sigma,1}, I_{2}^{\sigma,1}, \cdots, I_{a_{0}}^{\sigma,1}, I_{a_{0}+1}^{\sigma,1}, \cdots, I_{T}^{\sigma,1}$ as follows. For $1\le i\le a_0$, write
$$I_{i}^{\sigma ,1}=\Big[I_{\sigma *\big(i(l_1+1)-l_1 \big)}^{*},I_{\sigma *\big(i(l_1+1)-l_1+1 \big)}^{*},\cdots ,I_{\sigma *\big(i(l_1+1) \big)}^{*}\Big];$$
for $a_0+1\le i\le T$, write
$$I_{i}^{\sigma ,1}=\Big[I_{\sigma *\big((i-1)l_1+a_0+1\big)}^{*},I_{\sigma *\big((i-1)l_1+a_0+2\big)}^{*},\cdots ,I_{\sigma *\big(il_1+a_0\big)}^{*}\Big].$$
Then for each $I_{i}^{\sigma,1}(1\le i\le a_0)$, there are $l_1+1$ $k$-order first reconstructed basic intervals of $E$ contained in $I_{i}^{\sigma,1}$, and for each $I_{i}^{\sigma,1}(a_0+1\le i\le T)$,  there are  $l_1$  $k$-order first reconstructed basic intervals of $E$ contained in $I_{i}^{\sigma,1}$.
Let $S_{m_{k-1}+1}=\bigcup_{\sigma \in D_{k-1}}(\bigcup_{i=1}^{T}I^{\sigma,1}_i)$, and every closed interval $I_{i}^{\sigma,1}(1\le i\le T)$ is referred to as a basic interval of $S_{m_{k-1}+1}$ in $I_{\sigma}^{*}$. Then each basic interval of $S_{m_{k-1}}$ contains $T$ basic intervals of $S_{m_{k-1}+1}$.

\item[\textup{(II)}] If $i_{k}=2$, then $m_{k}=m_{k-1}+2$, and $S_{m_{k-1}+1}$ is defined  as bove, $S_{m_{k-1}}=E_{k-1}^{*}$, $S_{m_{k}}=E_{k}^{*}$. Thus the construction of $S_{m_{k-1}+i}$  for any $1\le i\le i_k-1$ is finished.

\item[\textup{(III)}] If $i_{k}\ge3$, then we proceed to construct  $S_{m_{k-1}+2}$. Let $l_2=a_2+a_3T+\cdots +a_{i_k-1}T^{i_k-3}+a_{i_k}T^{i_k-2}$, then we have $l_{1}=Tl_{2}+a_{1}$, $n^*_k=T^2l_2+a_1T+a_0=a_0(Tl_2+a_1+1)+(T-a_0)(Tl_2+a_1)$.

For any $I^{\sigma,1}_{i}\in \mathcal{S}_{m_{k-1}+1}(\sigma\in D_{k-1}, 1\le i\le T)$, the construction is divided into the two cases as follows:

\textbf{(i):} If $1\le i\le a_{0}$,
which means each $I^{\sigma,1}_{i}$ contains $l_1+1$  $k$-order first reconstructed basic intervals of $E$,
and $l_1+1=Tl_2+a_1+1=(l_2+1)(a_1+1)+l_2(T-a_1-1)$. Thus, we can define $T$ closed subintervals of $I^{\sigma,1}_{i}$, which are denoted by $I_{i*1}^{\sigma,1}, I_{i*2}^{\sigma,1}, \cdots , I_{i*(a_1+1)}^{\sigma,1}, I_{i*(a_1+2)}^{\sigma,1}, \cdots, I_{i*T}^{\sigma,1}$ as follows. For $1\le j\le a_1+1$, write
$$I_{i*j}^{\sigma ,1}=\Big[I_{\sigma *\big((i-1)l_1+i+(j-1)(l_2+1) \big)}^{*},I_{\sigma *\big((i-1)l_1+i+(j-1)(l_2+1)+1 \big)}^{*},\cdots ,I_{\sigma *\big((i-1)l_1+i+j(l_2+1)-1 \big)}^{*} \Big];$$
for $a_1+2\le j\le T$, write
$$I_{i*j}^{\sigma ,1}=\Big[I_{\sigma *\big((i-1)l_1+i+(j-1)l_2+a_1+1 \big)}^{*},I_{\sigma *\big((i-1)l_1+i+(j-1)l_2+a_1+2 \big)}^{*},\cdots ,I_{\sigma *\big((i-1)l_1+i+jl_2+a_1 \big)}^{*} \Big].$$
Then for each $I_{i*j}^{\sigma,1}(1\le j\le a_1+1)$, there are $l_2+1$  $k$-order first reconstructed basic intervals of $E$ contained in $I_{i*j}^{\sigma,1}$,
and for each  $I_{i*j}^{\sigma,1}(a_1+2\le j\le T)$, there are $l_2$  $k$-order first reconstructed basic intervals of $E$  contained in $I_{i*j}^{\sigma,1}$.

\textbf{(ii):} If $a_{0}+1\le i\le T$, which means
each $I^{\sigma,1}_{i}$ contains $l_1$ $k$-order first reconstructed basic intervals of $E$,
and $l_1=Tl_2+a_1=a_1(l_2+1)+(T-a_1)l_2$. Thus, we can define $T$ closed subintervals of $I^{\sigma,1}_{i}$, which are denoted by $I_{i*1}^{\sigma,1}, I_{i*2}^{\sigma,1},\cdots, I_{i*a_1}^{\sigma,1}, I_{i*(a_1+1)}^{\sigma,1},\cdots, I_{i*T}^{\sigma,1}$ as follows. For $1\le j\le a_1$, write
$$I_{i*j}^{\sigma ,1}=\Big[I_{\sigma *\big((i-1)l_1+a_0+(j-1)l_2+j \big)}^{*},I_{\sigma *\big((i-1)l_1+a_0+(j-1)l_2+j+1 \big)}^{*},\cdots ,I_{\sigma *\big((i-1)l_1+a_0+j(l_2+1) \big)}^{*} \Big];$$
for $a_1+1\le j\le T$, write
$$I_{i*j}^{\sigma ,1}=\Big[I_{\sigma *\big((i-1)l_1+a_0+(j-1)l_2+a_1+1 \big)}^{*},I_{\sigma *\big((i-1)l_1+a_0+(j-1)l_2+a_1+2 \big)}^{*},\cdots ,I_{\sigma *\big((i-1)l_1+a_0+jl_2+a_1 \big)}^{*} \Big].$$
Then for each $I_{i*j}^{\sigma,1}(1\le j\le a_1)$, there are $l_2+1$  $k$-order first reconstructed basic intervals of $E$  contained in $I_{i*j}^{\sigma,1}$,
and for each $I_{i*j}^{\sigma,1}(a_1+1\le j\le T)$, there are $l_2$   $k$-order first reconstructed basic intervals of $E$  contained in $I_{i*j}^{\sigma,1}$.

For any $1\le d \le T$ and$(d-1)T+1\le h\le dT$, define $I^{\sigma,2}_{h}=I^{\sigma,1}_{d*\big(h-(d-1)T\big)}$. Let $$S_{m_{k-1}+2}=\bigcup_{\sigma \in D_{k-1}}\bigcup_{i=1}^{T} \bigcup_{j=1}^{T}I^{\sigma,1}_{i*j}=\bigcup_{\sigma \in D_{k-1}}\bigcup_{h=1}^{T^2}I^{\sigma,2}_{h},$$ and every closed interval  $I^{\sigma,1}_{i*j}(1\le j\le T)$ is referred to as a basic intervals of $S_{m_{k-1}+2}$ in $I_{i}^{\sigma,1}$. Then each basic interval of $S_{m_{k-1}+1}$ contains $T$ basic intervals of $S_{m_{k-1}+2}$.

\item[\textup{(IV)}] If $i_{k}=3$, then $m_{k}=m_{k-1}+3$, and $S_{m_{k-1}+1}$, $S_{m_{k-1}+2}$ are defined  as above, $S_{m_{k-1}}=E_{k-1}^{*}$, $S_{m_{k}}=E_{k}^{*}$. Thus the construction of $S_{m_{k-1}+i}$ for any $1\le i \le i_k-1$ is finished.

\item[\textup{(V)}]If $i_k\ge 4$, then $m_k=m_{k-1}+i_k$. If $S_{m_{k-1}+i-1}(3\le i\le i_k-1)$ has been constructed,  then we can repeat the procedure of the construction of $S_{m_{k-1}+i-1}$ from $S_{m_{k-1}+i-2}$ to  finish the construction of $S_{m_{k-1}+i}$ from $S_{m_{k-1}+i-1}$. Thus, each basic interval of $S_{m_{k-1}+j-1}\ (1\le j\le i_{k}-1)$ contains $T$ basic intervals of $S_{m_{k-1}+j}$, which yields that each basic interval of $S_{m_{k-1}}$ contains $T^{i_{k}-1}$ basic intervals of $S_{m_{k-1}+i_{k}-1}$.
 Note that  $m_k=m_{k-1}+i_k$ and $S_{m_{k}}=E_{k}^{*}$ for any $k\ge 1$, it follows that every basic interval of $S_{m_{k-1}}$ contains $n_k^{*}$ basic intervals of $S_{m_{k}}$, then every basic interval of $S_{m_{k-1}+i_{k}-1}$ contains at most $T^{2}$ basic intervals of $S_{m_{k}}$(Otherwise, if  there exists a basic interval of $S_{m_{k-1}+i_{k}-1}$ containing $T^{*}$ basic intervals of $S_{m_{k}}$ with $T^{*}>T^{2}$, then
every basic interval of $S_{m_{k-1}+i_{k}-1}$ contains $T^{*}$ or $T^{*}-1$ or $T^{*}+1$ basic intervals of $S_{m_{k}}$. Then we can obtain that $n_{k}^{*}\ge (T^{*}-1)\cdot T^{i_{k}-1}\ge T^2\cdot T^{i_{k}-1}=T^{i_k+1}$, which contradicts  $ n_k^{*}<T^{i_k+1}$. Similarly, we can obtain that  every basic interval of $S_{m_{k-1}+i_{k}-1}$ contains at least $T$ basic intervals of $S_{m_{k}}$ since $n_{k}^{*} \ge T^{i_{k}}$).

\end{enumerate}

    We complete the construction of $\{S_{m}\}_{m\ge 0}$ and show that  conditions (1)-(3) of Lemma \ref{lem06} are satisfied for $E\in \mathcal{M}(I_{0},\left\{n_{k}\right\},\left\{c_{k}\right\})$ satisfying the conditions of Theorem \ref{thm2}.

Finally, we consider  condition (4) of Lemma \ref{lem06}. Obviously, $\max_{I\in\mathcal{S}_{m_{k}}}\left|I\right|= \min_{I\in\mathcal{S}_{m_{k}}}\left|I\right|=\delta_{k}^*$ for any $k \ge 0$ since $S_{m_{k}}=E_{k}^{*}$ for any $k\ge 0$.

For any $k\ge 1$, $m_{k-1}< m <m_k$, let $\tilde{I}_{\min}, \tilde{I}_{\max}\in \mathcal{S}_m$ satisfy $\max_{I\in \mathcal{S}_m}|I|=|\tilde{I}_{\max}|$, $\min_{I\in \mathcal{S}_m}|I|=|\tilde{I}_{\min}|$.
Suppose that the number of the basic intervals of $S_{m_k}$ contained in  $\tilde{I}_{\max}$ is $\overline{M}$, and the number of the basic intervals of $S_{m_k}$ contained in $\tilde{I}_{\min}$ is $\underline{M}$. Then by the above construction, we have
\begin{equation}\label{061}
\overline{M}\le \underline{M}+1.
\end{equation}

If condition (A) of Theorem \ref{thm2} holds, then for any $k\ge 1$,
$
\overline\nu_k\le w_1\delta_k .
$
By (\ref{045}), we have
\begin{equation}\label{062}
\overline\nu^*_{k}=\overline\nu_k+L_{k+1}+R_{k+1}.
\end{equation}

Notice that $\underline{M}\ge T>2(w_{1}+1)>2$, combine (\ref{041}), (\ref{048}), (\ref{061}), (\ref{062}) , we have
\begin{align*}
\max_{I\in \mathcal{S}_m}|I|
&\le \overline{M}\delta_{k}^{*}+(\overline{M}-1)\overline{\nu}_{k}^{*}\\&= \overline{M}\delta_{k}^{*}+(\overline{M}-1)(\overline{\nu}_{k}+L_{k+1}+R_{k+1})\\
&\le \overline{M}\delta_{k}^{*}+(\overline{M}-1)(w_1 \delta_{k}+L_{k+1}+R_{k+1})\\
&\le (\underline{M}+1)\delta_{k}^{*}+\underline{M}[w_1 (\delta_{k}^{*}+L_{k+1}+R_{k+1})+L_{k+1}+R_{k+1}]\\
&=[(w_1 +1)\underline{M}+1]\delta_{k}^{*}+(w_1 +1)\underline{M}(L_{k+1}+R_{k+1}) \\
&\le (w_1 +1)[(\underline{M}+1)\delta_{k}^{*}+\underline{M}(L_{k+1}+R_{k+1})]\\
&\le 2(w_1 +1)[\underline{M}\delta_{k}^{*}+(\underline{M}-1)\underline{\nu }_{k}^{*}] \\
&\le 2(w_1 +1)\min_{I\in \mathcal{S}_m}|I|.
\end{align*}

If condition (B) of Theorem \ref{thm2} holds, then for any $k\ge 1$,
$\overline\nu_k\le w_2\underline\nu_k$, which implies $w_{2}\ge 1$.
By (\ref{045}) and (\ref{046}), we have
\begin{equation}\label{063}
\overline\nu^*_{k}=\overline\nu_k+L_{k+1}+R_{k+1}\le w_2\underline\nu_k+L_{k+1}+R_{k+1}\le w_2\underline\nu_{k}^{*}.
\end{equation}

Notice that $\underline{M}\ge T>2w_{2}\ge 2$, combine (\ref{061}), (\ref{063}) and $w_{2}\ge 1$, we have
\begin{align*}
\max_{I\in \mathcal{S}_m}|I|
&\le \overline{M}\delta_{k}^{*}+(\overline{M}-1)\overline{\nu}_{k}^{*}\\
&\le (\underline{M}+1)\delta_{k}^{*}+\underline{M}\overline{\nu}_{k}^{*}\\
&\le 2w_2[\underline{M}\delta_{k}^{*}+(\underline{M}-1)\underline{\nu }_{k}^{*}] \\
&\le 2w_2\min_{I\in \mathcal{S}_m}|I|.
\end{align*}

By  the above argument, condition (4) of Lemma \ref{lem06} is  satisfied for $E\in \mathcal{M}(I_{0},\left\{n_{k}\right\},\\ \left\{c_{k}\right\})$  satisfying the conditions of Theorem \ref{thm2}, which implies that we finish
 the proof of Lemma \ref{lem06}.

\end{proof}
\bigskip

\begin{rem}\label{rm6}
Without loss of generality,  we take $I_{0}^{*}=[0,1]$ in this paper, which implies $\delta^*_{0}=1$ and $S_{m_{0}}=E_{0}^{*}=[0,1]$.
\end{rem}

\bigskip

\subsection{Some marks and lemmas}

Let $E\in \mathcal{M}(I_{0},\left\{n_{k}\right\},\left\{c_{k}\right\})$  satisfy the conditions of Theorem {\rm\ref{thm2}}, and $\left\{S_{m}\right\}_{m\ge0}$ be the sequence in Lemma \text{\rm\ref{lem06}}.

For any $m\ge0$ and $\tilde{I}\in \mathcal{S}_m$, $\tilde{I}-(\tilde{I}\bigcap S_{m+1})$ is a union of some intervals with their interiors pairwise disjoint, where the leftmost and rightmost members  are half-open intervals or empty sets, others are open intervals, and these intervals are referred to as  the gaps of $\tilde{I}$. Denote by $\mathcal{G}_m$ the family of all the gaps of all basic intervals of $S_m$, which means  $\mathcal{G}_m=\{$The gaps of $\tilde{I}: \tilde{I}\in \mathcal{S}_m\}$.

For any  $\tilde{I}\in \mathcal{S}_m$, we write $\mathcal{G}(\tilde{I})=\left \{ \tilde{I}^{'}:\tilde{I}^{'}\subset \tilde{I},\tilde{I}^{'}\in \mathcal{G}_m \right \}$.
By the argument in subsection \ref{sc},  $\tilde{I}$ contains at most $T^2$ basic intervals of $S_{m+1}$, which implies that $\#(\mathcal{G}(\tilde{I}))\le T^2+1$ ($\#$ denotes the cardinality).

For any $\tilde{I}\in \mathcal{S}_m$, let $\tilde{I}_1,\tilde{I}_2,\cdots ,\tilde{I}_{N(\tilde{I})}$ be all basic intervals of $S_{m+1}$ contained in $\tilde{I}$, where $N(\tilde{I})$ is the number of the basic intervals of $S_{m+1}$ contained in $\tilde{I}$, then $N(\tilde{I})\le T^{2}$.

For any $m\ge 1, k\ge 1$ and $m_{k-1}<m \le m_k$, write

$$\overline{\Lambda}_m=\frac{\max\limits_{\tilde{I}\in\mathcal{S}_m}|\tilde{I}|}{\min\limits_{\tilde{I}\in\mathcal{S}_{m-1}}|\tilde{I}|},
~\quad~\underline{\Lambda}_m=\frac{\min\limits_{\tilde{I}\in\mathcal{S}_m}|\tilde{I}|}{\max\limits_{\tilde{I}\in\mathcal{S}_{m-1}}|\tilde{I}|};$$

$$\lambda _{m-1}=\max\left \{ \frac{|\tilde{I}^{'}|}{|\tilde{I}|}:\tilde{I}\in \mathcal{S}_{m-1}, \tilde{I}^{'}\in \mathcal{G}(\tilde{I}) \right \};$$

$$\tau _{m-1}=\min\left \{ \frac{\sum_{i=1}^{N(\tilde{I}) } |\tilde{I}_i| }{|\tilde{I}|}:\tilde{I}\in \mathcal{S}_{m-1} \right \};$$

$$\pi _m=\max\left \{ \frac{|\tilde{I}|}{|\tilde{J}|}:\tilde{I}\in \mathcal{S}_m, \tilde{J}\in \mathcal{S}_{m-1},\tilde{I}\subset \tilde{J} \right \}.$$

We need the following lemmas for further discussions.
\begin{lem}\label{lem07}
$\tau _m\ge 1-(T^{2}+1)\lambda _m$ for any $m\ge0$, .
\end{lem}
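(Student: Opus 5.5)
The plan is to argue one basic interval at a time and then take the minimum over $\mathcal{S}_m$. Fix $m\ge 0$ and $\tilde{I}\in\mathcal{S}_m$. By the definition of the gaps of $\tilde{I}$, the set $\tilde{I}$ splits, up to finitely many endpoints, as the disjoint union of the basic intervals $\tilde{I}_1,\dots,\tilde{I}_{N(\tilde{I})}$ of $S_{m+1}$ contained in $\tilde{I}$ together with the gaps in $\mathcal{G}(\tilde{I})$. The basic intervals have pairwise disjoint interiors by Lemma \ref{lem06}(1). The gaps are the components of $\tilde{I}-(\tilde{I}\cap S_{m+1})$, so they are disjoint from each other and from the interiors of the $\tilde{I}_i$. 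Comparing lengths, that is, one-dimensional Lebesgue measure, then gives
\begin{equation*}
|\tilde{I}|=\sum_{i=1}^{N(\tilde{I})}|\tilde{I}_i|+\sum_{\tilde{I}'\in\mathcal{G}(\tilde{I})}|\tilde{I}'|.
\end{equation*}

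Next I bound the gap contribution. By definition of $\lambda_m$, each $\tilde{I}'\in\mathcal{G}(\tilde{I})$ satisfies $|\tilde{I}'|\le\lambda_m|\tilde{I}|$. As noted just before the definitions, $\#\mathcal{G}(\tilde{I})\le N(\tilde{I})+1\le T^2+1$, because $\tilde{I}$ contains at most $T^2$ basic intervals of $S_{m+1}$ by Lemma \ref{lem06}(3). Hence
\begin{equation*}
\sum_{i=1}^{N(\tilde{I})}|\tilde{I}_i|\ge |\tilde{I}|-(T^2+1)\lambda_m|\tilde{I}|.
\end{equation*}
Dividing by $|\tilde{I}|>0$ and taking the minimum over $\tilde{I}\in\mathcal{S}_m$ gives $\tau_m\ge 1-(T^2+1)\lambda_m$.

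The only step that needs care is the additivity of lengths. I need to confirm that the pieces $\tilde{I}_i$ are genuinely contained in $\tilde{I}$, which follows from the nested construction in Lemma \ref{lem06}. Endpoints may overlap, and the leftmost and rightmost gaps may be half-open or empty, but none of this changes the measure count. Beyond that, the argument is routine.
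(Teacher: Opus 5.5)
Your proof is correct and follows essentially the same route as the paper: bound each gap in $\mathcal{G}(\tilde{I})$ by $\lambda_m|\tilde{I}|$, use $\#\mathcal{G}(\tilde{I})\le T^2+1$, subtract the total gap length from $|\tilde{I}|$, divide by $|\tilde{I}|$, and minimize over $\tilde{I}\in\mathcal{S}_m$. Your justification of the length identity (touching endpoints and half-open or empty end gaps do not affect lengths) is a step the paper uses without comment.
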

\begin{proof}
For any $\tilde{I}\in \mathcal{S}_m$ and any $\tilde{I}^{'}\in \mathcal{G}(\tilde{I})$, we have
$\lambda _m\ge \frac{|\tilde{I}^{'}|}{|\tilde{I}|}$. Then
\begin{equation*}
\sum_{\tilde{I}^{'}\in \mathcal {G}(\tilde{I})} \frac{|\tilde{I}^{'}|}{|\tilde{I}|} \le \sum_{\tilde{I}^{'}\in \mathcal {G}(\tilde{I})}\lambda _m=\#\big(\mathcal{G}(\tilde{I})\big)\lambda _m\le (T^{2}+1)\lambda _m,
\end{equation*}
and yields
\begin{equation*}
\frac{ \sum_{i=1}^{N(\tilde{I}) } |\tilde{I}_i|}{|\tilde{I}|}=\frac{|\tilde{I}|-\sum_{\tilde{I}^{'}\in \mathcal {G}(\tilde{I})}|\tilde{I}^{'}|}{|\tilde{I}|} \ge 1-(T^{2}+1)\lambda _m.
\end{equation*}

Notice that $\tilde{I}$ is arbitrary, then we obtain
\begin{equation*}
\tau _m=\min\left \{ \frac{ \sum_{i=1}^{N(\tilde{I}) } |\tilde{I}_i|}{|\tilde{I}|}:\tilde{I}\in \mathcal{S}_m \right \}\ge 1-(T^{2}+1)\lambda _m.
\end{equation*}

\end{proof}

\begin{lem}\label{lem08}
Let $\{a_m\}_{m\ge 0}$ be a sequence of non-negative real numbers, if there exists a sequence $\{t_m\}_{m\ge 0}$ of non-negative integers such that
\begin{equation*}
\lim_{m \to \infty} \frac{1}{t_m} \sum_{i=0}^{t_m-1} a_i=0.
\end{equation*}
Then for any $\varepsilon \in (0,1)$, we have
\begin{equation*}
\lim\limits_{m\to\infty} \frac{\tilde{S}_\varepsilon (t_m)}{t_m}=1,\end{equation*}
where  $\tilde{S}_\varepsilon (m)=\#\big(\left \{ 0\le i\le m-1: a_i<\varepsilon \right \} \big)$  $(\#$ denotes the cardinality $)$ for any $m\ge 1$.
\end{lem}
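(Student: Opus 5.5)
This is a Markov/Chebyshev-type counting argument applied to Cesàro averages of a nonnegative sequence. Fix $\varepsilon\in(0,1)$ and consider, for each $m$, the complementary count
\[
\tilde{B}_\varepsilon(t_m)=\#\big(\{0\le i\le t_m-1: a_i\ge\varepsilon\}\big)=t_m-\tilde{S}_\varepsilon(t_m).
\]
It suffices to show $\tilde{B}_\varepsilon(t_m)/t_m\to 0$. We tacitly assume $t_m\ge 1$, at least for large $m$, since the hypothesis divides by $t_m$.

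The key step is to bound the sum from below using only the large terms. Since every $a_i\ge 0$, we may discard the indices with $a_i<\varepsilon$:
\[
\sum_{i=0}^{t_m-1}a_i\;\ge\;\sum_{\substack{0\le i\le t_m-1\\ a_i\ge\varepsilon}}a_i\;\ge\;\varepsilon\,\tilde{B}_\varepsilon(t_m).
\]
Dividing by $t_m$ gives
\[
0\le \frac{\tilde{B}_\varepsilon(t_m)}{t_m}\le \frac{1}{\varepsilon}\cdot\frac{1}{t_m}\sum_{i=0}^{t_m-1}a_i .
\]
By hypothesis the right-hand side tends to $0$, so by squeezing $\tilde{B}_\varepsilon(t_m)/t_m\to 0$. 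Hence
\[
\frac{\tilde{S}_\varepsilon(t_m)}{t_m}=1-\frac{\tilde{B}_\varepsilon(t_m)}{t_m}\to 1.
\]

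There is no real obstacle. The only points needing care are that nonnegativity of the $a_i$ is what justifies dropping the small terms, and that $t_m\ge1$ is needed for the ratio to make sense.
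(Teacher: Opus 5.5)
Your proposal is correct and is essentially the paper's own argument. Like the paper, you bound $\sum_{i=0}^{t_m-1}a_i$ below by $\varepsilon\,(t_m-\tilde{S}_\varepsilon(t_m))$ and divide by $t_m$; the paper then phrases the conclusion with $\liminf$/$\limsup$ where you use a direct squeeze, which is only a cosmetic difference.
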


\begin{proof}
For any $\varepsilon \in (0,1)$, we have
\begin{equation*}\sum_{i=0}^{t_m-1} a_i=\sum_{a_i<\varepsilon, 0\le i\le t_m-1}a_i+\sum_{a_i\ge \varepsilon, 0\le i\le t_m-1}a_i\ge \sum_{a_i\ge \varepsilon, 0\le i\le t_m-1}a_i \ge \big(t_m-\tilde{S}_\varepsilon (t_m) \big)\varepsilon.\end{equation*}

Since
\begin{equation*}
\lim\limits_{m\to\infty} \frac{1}{t_m} \sum_{i=0}^{t_m-1} a_i=0,
\end{equation*}
we obtain
\begin{align*}
1\ge \limsup\limits_{m\to\infty}\frac{\tilde{S}_\varepsilon (t_m)}{t_m}&\ge \liminf\limits_{m\to\infty} \frac{\tilde{S}_\varepsilon (t_m)}{t_m}\\&=1-\limsup\limits_{m\to\infty} \frac{t_m-\tilde{S}_\varepsilon (t_m)}{t_m}\\&\ge 1-\limsup\limits_{m\to\infty}\frac{1}{t_m\varepsilon} \sum_{i=0}^{t_m-1} a_i\\&=1,
\end{align*}
which yields  $\lim\limits_{m\to\infty} \frac{\tilde{S}_\varepsilon (t_m)}{t_m}=1.$
\end{proof}

We can obtain the  follwing lemma by equation (\ref{031}) in Theorem \ref{thm1} of this paper.
\begin{lem}\label{lem09}

Let $E\in \mathcal{M}(I_{0},\left\{n_{k}\right\},\left\{c_{k}\right\})$ satisfy the conditions of Theorem {\rm\ref{thm2}}, $\{S_{m}\}_{m\ge 0}$ and $\{m_{k}\}_{k\ge 0}$ be the sequences obtained from Lemma {\rm\ref{lem06}}.

If $\dim_{P}E=1$, then there exists a subsequence $\{b_{k}\}_{k\ge 0}$ of $\{m_{k}-1\}_{k\ge 0}$, such that
\begin{enumerate}
\item[\textup{($\bar a$)}]$\lim\limits_{k\to \infty}\frac{1}{b_k} \log_{T}{l(S_{b_k})}=0$, where $l(S_{m})$ is the sum of the lengths of all  basic intervals of $S_{m}$ for any $m\ge 0$;
\item[\textup{($\bar b$)}]$\lim\limits_{k\to \infty}\frac{1}{b_k}\sum_{t=0}^{b_k-1}\lambda_t=0$;
\item[\textup{($\bar c$)}]$\lim\limits_{k\to \infty}\frac{1}{b_k}\sum_{j=0}^{b_k-1}\log_{T}{\tau _j}=0$;
\item[\textup{($\bar d$)}]there is a real number $\beta\in (0,1)$, such that $\lim\limits_{k\to \infty}\frac{S_\beta  (b_k)}{b_k} =1$, where for any $\varepsilon\in (0,1)$ $S_\varepsilon (b_k)=\#(\left \{ 1\le j\le b_k:\pi _j<\varepsilon  \right \})$ $(\#$ denotes the cardinality $)$.
\end{enumerate}

\end{lem}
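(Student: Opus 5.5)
The plan is to pass from Theorem~\ref{thm1} to the level structure of Lemma~\ref{lem06}. Since $\dim_PE=1$ and the conditions of Theorem~\ref{thm2} are among those of Theorem~\ref{thm1}, equation (\ref{031}) and (\ref{t}) give $t=1$. So I would choose a subsequence $k_j\to\infty$ with
\[
\frac{\log N_{k_j+1}}{-\log\delta^*_{k_j}+\log n_{k_j+1}}\to 1 ,
\]
and set $b_j=m_{k_j+1}-1$; this is a member of $\{m_k-1\}_{k\ge0}$. Two bookkeeping facts are used throughout:
\[
\#\mathcal S_{b_j}=N_{k_j}T^{i_{k_j+1}-1}\ge N_{k_j+1}T^{-2},
\qquad
b_j=m_{k_j+1}-1\le \log_T N_{k_j+1}+k_j+1 .
\]
Since $n_i\ge2$, we have $k_j\le\log_2N_{k_j}$, so $b_j\asymp\log N_{k_j+1}$. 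Hence any error that is $o(\log N_{k_j+1})$ is also $o(b_j)$.

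\emph{Step ($\bar d$) first, since it is the easiest.} Every basic interval of $S_{m-1}$ contains at least $2$ basic intervals of $S_m$. By Lemma~\ref{lem06}(4), their lengths are comparable with a constant $C$ ($C=2(w_1+1)$ or $C=2w_2$). Therefore, for $\tilde I\subset\tilde J$,
\[
\frac{|\tilde I|}{|\tilde J|}\le\frac{|\tilde I|}{|\tilde I|+\min|\cdot|}\le\frac{C}{C+1}.
\]
Taking any $\beta\in(C/(C+1),1)$ gives $S_\beta(b_j)=b_j$, which is ($\bar d$).

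\emph{Step ($\bar a$).} The bound $l(S_{b_j})\le l(S_0)=1$ is trivial, so only a lower bound is needed. I would write $l(S_{b_j})\ge\#\mathcal S_{b_j}\cdot\min_{\mathcal S_{b_j}}|I|$. Inside each $I^*_\sigma$, $\sigma\in D_{k_j}$, the $T^{i_{k_j+1}-1}$ basic intervals of $S_{b_j}$, together with the gaps between them, fill $I^*_\sigma$. The gaps are single $(k_j+1)$-order reconstructed gaps, and by (\ref{045}), (\ref{048}) and condition (A) or (B) each is at most a constant times an adjacent basic interval. Hence
\[
\min_{\mathcal S_{b_j}}|I|\gtrsim \frac{\delta^*_{k_j}}{T^{i_{k_j+1}-1}},
\qquad\text{so}\qquad
l(S_{b_j})\gtrsim N_{k_j}\delta^*_{k_j}\gtrsim\frac{N_{k_j+1}\delta^*_{k_j}}{n_{k_j+1}}.
\]
The choice of $k_j$ gives $\log\bigl(N_{k_j+1}\delta^*_{k_j}/n_{k_j+1}\bigr)=o(\log N_{k_j+1})=o(b_j)$, which is ($\bar a$).

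\emph{Steps ($\bar b$), ($\bar c$).} I would telescope
\[
\log_T l(S_{b_j})=\sum_{t<b_j}\log_T\frac{l(S_{t+1})}{l(S_t)},
\]
in which every term is $\le0$. By ($\bar a$), the average of these terms over $t<b_j$ tends to $0$. The key claim is a two-sided comparison: there is $c>0$ with
\[
\log_T\frac{l(S_{t+1})}{l(S_t)}\le -c\,\lambda_t
\qquad\text{and}\qquad
\log_T\frac{l(S_{t+1})}{l(S_t)}\le c^{-1}\log_T\tau_t .
\]
Both should follow from the uniformity in Lemma~\ref{lem06}(4) together with (A) or (B). These say that gap ratios, and children-to-parent length ratios, are comparable across all basic intervals of a fixed level. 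So the worst interval (the one realizing $\lambda_t$ or $\tau_t$) controls a fixed proportion of the total mass $l(S_t)$. Granting this, $\frac1{b_j}\sum\lambda_t\to0$ and $\frac1{b_j}\sum\log_T\tau_t\to0$.

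For ($\bar c$) there is a fallback if the comparison with $\tau_t$ is awkward. Lemma~\ref{lem08} applied to ($\bar b$) shows that $\lambda_t<\varepsilon$ for all but $o(b_j)$ indices $t$. At those indices Lemma~\ref{lem07} gives $\log_T\tau_t\ge\log_T(1-(T^2+1)\varepsilon)$, which is small. For the remaining indices a crude uniform lower bound $\tau_t\ge c_0>0$ is needed, and this should again come from Lemma~\ref{lem06}(4). I expect this comparison step — controlling $\lambda_t$ and $\tau_t$ by the global ratio $l(S_{t+1})/l(S_t)$ — to be the main obstacle. This is where the homogeneity of $E$ via (A) or (B) has to replace the exact self-similarity available for homogeneous perfect sets. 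The lower bound on $\min_{\mathcal S_{b_j}}|I|$ in ($\bar a$) is a secondary point to verify carefully.
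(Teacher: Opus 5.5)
Your arguments for $(\bar a)$ and $(\bar d)$ are correct and are essentially the paper's. In $(\bar a)$, though, the estimate that turns $o(\log N_{k_j+1})$ into $o(b_j)$ is the lower bound $b_j\ge\frac12\log_TN_{k_j+1}-1$, which follows from $i_k>\frac12\log_Tn_k$; the upper bound you wrote does not do this.

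The gap is in $(\bar b)$ and $(\bar c)$. Your key comparison $\log_T\frac{l(S_{t+1})}{l(S_t)}\le-c\lambda_t$, and the analogous one with $\log_T\tau_t$, is false under condition (A).
\begin{itemize}
\item Lemma~\ref{lem06}(4) compares \emph{lengths} of basic intervals of one level. It says nothing about gaps.
\item Under (A) each $k$-order gap is only bounded above by $w_1\delta_k$. The only rigidity is that the \emph{total} gap length inside $I^*_\sigma$ is the same for every $\sigma\in D_{k-1}$.
\item So one parent $I^*_{\sigma_1}$ may carry a single gap of length $w_1\delta_k$ at a position that is separated only at the last step $m_k-1\to m_k$. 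All other parents can put their gap mass at positions separated earlier.
\item With $L_k=R_k=0$ this gives $\lambda_{m_k-1}\ge w_1/(T^2(1+w_1))$ and $\tau_{m_k-1}\le 1-w_1/(T^2(1+w_1))$. But $1-l(S_{m_k})/l(S_{m_k-1})\le w_1/N_k$.
\item Take, e.g., $n_k=T^{2k}$ and $1-n_kc_k\asymp 1/k$. This is compatible with (A) and with $\dim_PE=1$, and the configuration can be repeated in every block.
\item Then $\sum_{t<b}\lambda_t$ grows like the number $K$ of blocks, while $-\log_Tl(S_b)\asymp\log K$. So no inequality $\sum_{t<b}\lambda_t\lesssim-\log_Tl(S_b)$ holds even in aggregate, and $(\bar b)$ cannot be read off from $(\bar a)$ this way.
\end{itemize}
Your heuristic is also off: the worst interval carries only about $1/\#\mathcal S_t$ of $l(S_t)$, not a fixed proportion. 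Under (B) your first comparison is true, but for a different reason. All $k$-order reconstructed gaps lie within a factor $w_2$ of each other, and $L_{k+1}+R_{k+1}\le\underline\nu^*_k$ by (\ref{048}). Hence \emph{every} basic interval of $S_t$ loses a fraction $\gtrsim\lambda_t$. This is exactly the paper's $\varphi_m$ argument.

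What is missing under (A) is the paper's block estimate.
\begin{itemize}
\item A basic interval of $S_{m_k-r}$ has length at least $2^{r-1}\delta_k$, while every gap in the block is at most $\overline\nu^*_k\le(1+w_1)\delta_k$. Hence $\sum_{m_{k-1}\le t<m_k}\lambda_t\le 2\overline\nu^*_k/\delta_k\le 2(1+w_1)$.
\item Blocks with $\delta_k/\delta^*_{k-1}<\bar\theta$ have $n_k$, and hence $i_k$, large. So this $O(1)$ bound is a negligible fraction of the block length $i_k$.
\item Blocks with $\delta_k/\delta^*_{k-1}\ge\bar\theta$ satisfy $\overline\nu^*_k/\delta_k\le\bar\theta^{-1}\overline\nu^*_k/\delta^*_{k-1}\le\bar\theta^{-1}(1-n_k\delta^*_k/\delta^*_{k-1})$. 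These common per-parent gap fractions sum to $o(b)$ by $(\bar a)$.
\end{itemize}

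Your fallback for $(\bar c)$ also fails, because there is no uniform bound $\tau_t\ge c_0>0$. Suppose $L_{k+1}+R_{k+1}$ is most of $\delta_k$, so $\delta^*_k\ll\delta_k$. At step $m_k-1$ the reconstructed children are then separated by reconstructed gaps of length at least $L_{k+1}+R_{k+1}$. So $\tau_{m_k-1}\lesssim\delta^*_k/\delta_k$ can be as small as you like, under (A) or (B). Lemma~\ref{lem06}(4) gives no control across consecutive levels.

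The indices with $\lambda_t\ge\varepsilon$ must instead be handled as in the paper. From $l(S_j)\le l(S_{j-1})$ and
$l(S_j)\le T^2\overline\Lambda_j\,l(S_{j-1})\le T^2C^2\tau_{j-1}\,l(S_{j-1})$,
one gets $\prod_{j\in A}\tau_{j-1}\ge (T^2C^2)^{-\#A}\,l(S_b)$ for every $A\subseteq\{1,\dots,b\}$. With $\#A=o(b)$ and $(\bar a)$, this lower bound is subexponential, which is what $(\bar c)$ needs.
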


\begin{proof}
($\bar a$)
For any $k\ge 1$, we consider $l(S_{m_k-1})$ and $l(S_{m_{k-1}})$. Since $S_{m_{k-1}}=E_{k-1}^*$, we have $l(S_{m_{k-1}})=l(E_{k-1}^*)=N_{k-1}^* \delta_{k-1}^*$ by (\ref{049}). According to the construction of $\{S_m\}_{m\ge 0}$, to obtain $S_{m_k-1}$ from $S_{m_{k-1}}$, we need to  remove a half-open
interval of length $L_{k+1}$ and a half-open interval of length $R_{k+1}$ from each basic interval of $S_{m_{k-1}}$, and also need to  remove $[\sum_{j=0}^{i_k-2}T^{j}(T-1)  ]N_{k-1}^{*} =(T^{i_k-1}-1 )N_{k-1}^{*}$ open intervals whose lengths are at most $\overline{\nu }_{k}^{*} $ from $S_{m_{k-1}}$.

If condition {\rm(A)} of Theorem {\rm\ref{thm2}} holds, then $\overline{\nu }_{k}\le w_1\delta _{k} $. Combing $T^{i_k}\le n_{k}^{*}<T^{i_k+1} , w_1>0$,  (\ref{041}), (\ref{045}) and  (\ref{048}), we have
\begin{equation}\label{064}
\begin{aligned}
l(S_{m_k-1})
&\ge l(S_{m_{k-1}})-N_{k-1}^{*}[(L_{k+1}+R_{k+1})+(T^{i_k-1}-1)\overline\nu_{k}^{*}]\\
&\ge l(S_{m_{k-1}})-T^{i_k-1}N_{k-1}^{*}\overline\nu_{k}^{*}\\
&= l(S_{m_{k-1}})-T^{i_k-1}N_{k-1}^{*}(\overline\nu_{k}+L_{k+1}+R_{k+1})\\
&\ge l(S_{m_{k-1}})-T^{i_k-1}N_{k-1}^{*}(w_1\delta_{k}+L_{k+1}+R_{k+1})\\
&\ge l(S_{m_{k-1}})-T^{i_k-1}N_{k-1}^{*}(1+w_1)(\delta_{k}+L_{k+1}+R_{k+1})\\
&\ge l(S_{m_{k-1}})-\frac{n_{k}^*}{T}N_{k-1}^{*}(1+w_1)(\delta_{k}^{*}+2(L_{k+1}+R_{k+1}))\\
&\ge l(S_{m_{k-1}})-\frac{2(1+w_1)}{T}N_{k-1}^{*}\delta_{k-1}^{*}\\
&=\big(1- \frac{2(1+w_1)}{T}\big) l(S_{m_{k-1}})  .
\end{aligned}
\end{equation}

Since $\dim_PE=1$, then by equation (\ref{031}) in  Theorem \ref{thm1} and (\ref{041}), we have
\begin{equation*}\limsup\limits_{k \to\infty}  \frac{\log_{T} N_{k+1}^{*}}{-\log_{T}(\frac{\delta_{k}^*}{n_{k+1}^*} )}=\limsup\limits_{k \to\infty}  \frac{\log_{T} n_1n_2\cdots n_kn_{k+1}}{-\log_{T}(\delta_k-L_{k+1}-R_{k+1})+\log_{T} n_{k+1}}=1.\end{equation*}
Thus, \begin{equation*}\limsup\limits_{k \to\infty} \frac{\log_{T}{N_{k}^{*} } }{\log_{T}{N_{k}^{*}}-\log_{T}{N_{k-1}^{*}\delta_{k-1}^{*}}}=\limsup\limits_{k \to\infty} \frac{\log_{T}{N_{k}^{*} } }{-\log_{T}(\frac{\delta_{k-1}^*}{n_{k}^*} )}=1,\end{equation*}
which implies
\begin{equation}\label{065}\limsup\limits_{k\to\infty}(N_{k-1}^{*}\delta_{k-1}^{*})^{\frac{1}{\log_{T}N_{k}^{*}}}=1.\end{equation}

Since $T^{i_k}\le n_{k}^{*} <T^{i_k+1}$, $m_{k}=\sum_{j=1}^{k}i_j $, we have $N_{k}^{*}=\prod_{i=1}^{k}n_{i}^{*}<T^{m_k+k}\le T^{2m_k}     $, which yields $\log_{T}{N_{k}^{*} }\le  2m_k$.
Notice that $m_k-1\ge \frac{m_k}{2} \ge \frac{\log_{T}{N_{k}^{*} } }{4}$ by $m_k\ge 2$ if $k$ is large enough, then by (\ref{064}) and (\ref{065}), we obtain
\begin{equation}\label{066}
\begin{aligned}
\limsup \limits_{k\to \infty }[l(S_{m_k-1})]^{\frac{1}{m_k-1} }
&\ge \limsup \limits_{k\to \infty }[(1-\frac{2(1+w_1)}{T} )l(S_{m_{k-1}})]^{\frac{1}{m_k-1} }\\
&\ge  \limsup \limits_{k\to \infty }[(1-\frac{2(1+w_1)}{T} )(N_{k-1}^{*}\delta_{k-1}^{*} )]^{\frac{4}{\log_{T}{N_{k}^{*}} } }\\
&=1.
\end{aligned}
\end{equation}

On the other hand, for any $m\ge 0$, we have $l(S_m)\le 1$ by Remark \ref{rm6}, then
\begin{equation}\label{067}\limsup \limits_{k\to \infty }[l(S_{m_k-1})]^{\frac{1}{m_k-1} }\le \limsup \limits_{k\to \infty }[l(S_{m_k-1})]^{0}=1.\end{equation}
Combining (\ref{066}) and (\ref{067}), we obtain \begin{equation*}\limsup \limits_{k\to \infty }[l(S_{m_k-1})]^{\frac{1}{m_k-1} }=1.\end{equation*}
Therefore, there exists a subsequence $\{b_k\}_{k\ge 0}$ of $\{m_k-1\}_{k\ge 0}$, such that $$\lim_{k\to \infty }[l(S_{b_k})]^{\frac{1}{b_k} }=1,$$ which yields $\lim_{k\to \infty }\frac{1}{b_k}\log_{T}l(S_{b_k})=0$.

If condition {\rm(B)} of Theorem {\rm\ref{thm2}} holds, then $\overline{\nu }_k\le w_2\underline{\nu }_k  $. Combing $n_{k}^*\ge 2, w_2\ge 1$, $T^{i_k}\le n_{k}^*<T^{i_k+1}$, (\ref{048}) and (\ref{063}), we have
\begin{equation}
\begin{aligned}
l(S_{m_k-1})
&\ge l(S_{m_{k-1}})-N_{k-1}^{*}[(L_{k+1}+R_{k+1})+(T^{i_k-1}-1)\overline\nu_{k}^{*}]\\
&\ge l(S_{m_{k-1}})-T^{i_k-1}N_{k-1}^{*}\overline\nu_{k}^{*}\\
&\ge l(S_{m_{k-1}})-\frac{n_{k}^*}{T}N_{k-1}^{*}\overline\nu_{k}^{*}\\
&\ge l(S_{m_{k-1}})-\frac{2(n_{k}^*-1)}{T}N_{k-1}^{*} \overline\nu_{k}^{*}\\
&\ge l(S_{m_{k-1}})-\frac{2w_2}{T}(n_{k}^*-1)N_{k-1}^{*} \underline\nu_{k}^{*}\\
&\ge l(S_{m_{k-1}})-\frac{2w_2}{T}N_{k-1}^{*}\delta_{k-1}^{*}\\
&\ge\big(1- \frac{2w_2}{T}\big) l(S_{m_{k-1}})  .\nonumber
\end{aligned}
\end{equation}
Then by a similar argument to the case that the condition {\rm(A)} of Theorem {\rm\ref{thm2}} holds(replace $(1+w_1)$ by $w_2$), we obtain that there exists a subsequence $\{b_k\}_{k\ge 0}$ of $\{m_k-1\}_{k\ge 0}$, such that $$\lim_{k \to \infty }[l(S_{b_k})]^{\frac{1}{b_k} }=1,$$ which yields $\lim_{k \to \infty }\frac{1}{b_k}\log_{T}{l(S_{b_k})}=0 $.

($\bar b$) If condition {\rm(A)} of Theorem {\rm\ref{thm2}} holds, let $\{b_k\}_{k\ge 0}$ be the sequence in ($\bar a$), then for any $k\ge 0$, there exists $p_k\ge 1$ such that $b_k=m_{p_k}-1$. Notice that for any $1\le t\le p_k-1$ and $\sigma \in D_{t-1}$, $I_{\sigma}^*$ contains $n_{t}^*$ $t$-order  first reconstructed basic intervals of $E$ of length $\delta_{t}^*$ and $n_{t}^*+1$ gaps, and we have $\delta _0^{*}=1$ by Remark \ref{rm6}, then we obtain
\begin{align*}
&\frac{1}{b_k} \log_{T}{\prod_{t=1}^{p_{k}-1}\frac{\delta _{t-1}^{*}-\Big[\sum_{l=1}^{n_{t}^*-1}\xi _{\sigma ,l}^{*}+L_{t+1}+R_{t+1}   \Big]}{\delta _{t-1}^{*}}}\\
&=\frac{1}{b_k} \log_{T}{\prod_{t=1}^{p_{k}-1}\frac{n_{t}^*\delta _t^{*}}{\delta _{t-1}^{*}} }\\
&=\frac{1}{b_k} \log_{T}{(n_{1}^*n_{2}^*\cdots n_{p_{k}-1}^*\delta _{p_{k}-1}^{*})}-\frac{1}{b_k}\log_{T}{\delta _0^{*}}\\
&=\frac{1}{b_k} \log_{T}{\big(l(S_{m_{p_{k}-1}})\big)}.
\end{align*}
Thus, by ($\bar a$), we obtain \begin{align*}
\lim_{k\to\infty}\frac{1}{b_k} \log_{T}{\prod_{t=1}^{p_{k}-1}\frac{\delta _{t-1}^{*}-\Big[\sum_{l=1}^{n_{t}^*-1}\xi _{\sigma ,l}^{*}+L_{t+1}+R_{t+1} \Big]}{\delta _{t-1}^*}}&=
\lim_{k\to\infty}\frac{1}{b_k} \log_{T}{l(S_{m_{p_{k}-1}})}\\&=\lim_{k\to\infty}\frac{1}{b_k} \log_{T}{l(S_{b_k})}\\&=0.\end{align*}

Notice that for any $x\in [0,1)$, we have $\log_{T}{(1-x)}\le -x $, then
 \begin{align*}
&\frac{1}{b_k} \log_{T}
{\prod_{t=1}^{p_{k}-1}\frac{\delta _{t-1}^{*}-\Big[\sum_{l=1}^{n_{t}^*-1}\xi _{\sigma ,l}^{*}+L_{t+1}+R_{t+1} \Big]}{\delta _{t-1}^{*}}}\\
&\le \frac{1}{b_k}\sum_{t=1}^{p_{k}-1} -
\Big[\frac{\sum_{l=1}^{n_{t}^*-1}\xi _{\sigma ,l}^{*}+L_{t+1}+R_{t+1} }{\delta _{t-1}^{*}} \Big ]\\
&\le 0,
\end{align*}
which yields $$\lim_{k\to\infty}\frac{1}{b_k}\sum_{t=1}^{p_{k}-1} \frac{\sum_{l=1}^{n_{t}^*-1}\xi _{\sigma ,l}^{*}+L_{t+1}+R_{t+1}}{\delta _{t-1}^*}=0.$$
Since $\overline\nu^*_{t}=\max\limits_{\sigma\in D_{t-1}, 1\leq i\leq n_{t}^*-1}\xi_{\sigma,i}+L_{t+1}+R_{t+1}\le \sum_{l=1}^{n_{t}^*-1}\xi _{\sigma ,l}^{*}+L_{t+1}+R_{t+1}$ for any $1\le t\le p_k-1$, we have
\begin{equation}\label{068}
\lim_{k\to\infty}\frac{1}{b_k}\sum_{t=1}^{p_{k}-1}\frac{\overline{\nu } _{t}^{*}}{\delta _{t-1}^*}=0.
\end{equation}

We begin to estimate  $\lambda _m$ for any $m\ge 0$. Suppose $k$ is a positive integer with $m_{k-1}\le m< m_k$. If $\tilde{I}\in \mathcal {S}_{m_k-1}$, according to the reconstruction process, $\tilde{I}$ contains at least 2 basic intervals of $S_{m_k}$ and 1 gap whose length is at least $L_{k+1}+R_{k+1}$, then $|\tilde{I}|\ge 2\delta_{k}^{*} +L_{k+1}+R_{k+1}$.
If $\tilde{I}\in \mathcal {S}_{m_k-2}$, $\tilde{I}$ contains at least $2T$ basic intervals of $S_{m_k}$ and $2T-1$ gaps whose length is at least $L_{k+1}+R_{k+1}$, then $|\tilde{I}|\ge 2T\delta_{k}^{*} +(2T-1)(L_{k+1}+R_{k+1})$.
If $\tilde{I}\in \mathcal {S}_{m_k-3}$, $\tilde{I}$ contains at least $2T^2$ basic intervals of $S_{m_k}$ and $2T^2-1$ gaps whose length is at least $L_{k+1}+R_{k+1}$, then $|\tilde{I}|\ge 2T^2\delta_{k}^{*} +(2T^2-1)(L_{k+1}+R_{k+1})$.
From this, we can infer that if $r\in  \{ 1,2,\cdots ,m_k-m_{k-1} \}$, then we have for any $\tilde{I}\in \mathcal {S}_{m_k-r}$, $\tilde{I}$ contains at least $2T^{r-1}$ basic intervals of $S_{m_k}$ and $2T^{r-1}-1$ gaps whose length is at least $L_{k+1}+R_{k+1}$, which implies $|\tilde{I}|\ge 2T^{r-1}\delta_{k}^{*} +(2T^{r-1}-1)(L_{k+1}+R_{k+1})$. Since $T\ge 2$, we have $|\tilde{I}|\ge 2^{r}\delta_{k}^{*} +(2^{r}-1)(L_{k+1}+R_{k+1})\ge 2^{r-1}(\delta_{k}^{*} +L_{k+1}+R_{k+1})$.

Besides,  we have $|\tilde{I}^{'}|\le \overline{\nu } _{k}^{*}$ for any $\tilde{I}^{'} \in \mathcal {G}_{m}$. Then for any $r\in  \{ 1,2,\cdots ,m_k-m_{k-1} \}$, we have \begin{equation}\label{069}
\lambda _{m_k-r}\le \frac{\overline{\nu } _{k}^{*}}{2^{r-1}\big(\delta_{k}^{*}+L_{k+1}+R_{k+1} \big)}.\end{equation}
Therefore, \begin{equation}\label{0610} \sum_{m=m_{k-1}}^{m_k-1}\lambda _m=\sum_{r=1}^{i_k} \lambda _{m_k-r}
\le \frac{\overline{\nu } _{k}^{*}}{\delta_{k}^{*}+L_{k+1}+R_{k+1}} \sum_{r=0}^{i_k-1} \frac{1}{2^{r} }
\le \frac{2\overline{\nu } _{k}^{*}}{\delta_{k}^{*}+L_{k+1}+R_{k+1}}.\end{equation}
Thus, for any $k\ge 1$, we have \begin{equation}\label{0611}  \sum_{t=0}^{m_k-1} \lambda _t\le 2\sum_{t=1}^{k}\frac{\overline{\nu} _{t}^{*}}{\delta _{t}^{*}+L_{t+1}+R_{t+1}}. \end{equation}

For any $\epsilon >0$, there exists $\bar{\theta} >0$ satisfying \begin{equation}\label{0612} 0<\frac{1+w_1}{\log_{T}{\frac{1}{(1+w_1)\bar{\theta}} }-1 } <\frac{\epsilon }{4}. \end{equation}
For $t\ge 1$ with $\frac{\delta _{t}^{*}+L_{t+1}+R_{t+1}}{\delta _{t-1}^{*}} <\bar{\theta}$, since the condition {\rm(A)} of Theorem {\rm\ref{thm2}} holds, which means $\overline{\nu }_{t}\le w_1\delta _t = w_1(\delta _{t}^{*}+L_{t+1}+R_{t+1})$, by (\ref{044}), we have
\begin{align*}
\delta _{t-1}^{*}&= \sum_{l=1}^{n_{t}^{*}-1}\xi_{\sigma,l}^*+L_{t+1}+R_{t+1}+n_{t}^{*}\delta _{t}^{*}\\
&=\sum_{l=1}^{n_{t}^{*}-1}\xi_{\sigma,l}+n_{t}^{*}(L_{t+1}+R_{t+1})+n_{t}^{*}\delta _{t}^{*}\\
&\le (n_{t}^{*}-1)\overline{\nu }_{t}+n_{t}^{*}(L_{t+1}+R_{t+1}+\delta _{t}^{*})  \\
&\le (n_{t}^{*}-1)w_1(L_{t+1}+R_{t+1}+\delta _{t}^{*})+n_{t}^{*}(L_{t+1}+R_{t+1}+\delta _{t}^{*})  \\
&\le (1+w_1)n_{t}^{*}(L_{t+1}+R_{t+1}+\delta _{t}^{*}),
\end{align*}
then $(1+w_1)n_{t}^{*}\bar{\theta}\ge (1+w_1)n_{t}^{*} \frac{\delta _{t}^{*}+L_{t+1}+R_{t+1}}{\delta _{t-1}^{*}}\ge 1.$
Since $i_t\ge \log_{T}{n_{t}^{*}-1}$, by (\ref{0612}), we obtain \begin{equation}\label{0613} \frac{1+w_1}{i_t} <\frac{\epsilon }{4}.\end{equation}

By (\ref{068}), there is a positive integer $N>0$, such that for any $k\ge N$, \begin{equation}\label{0614} \frac{1}{b_k} \sum_{t=1}^{p_{k}-1} \frac{\overline{\nu } _{t}^{*}}{\delta _{t-1}^{*}} <\frac{\epsilon \bar{\theta} }{4}.\end{equation}
For $t\ge 1$, notice that $\overline{\nu }_{t}\le w_1\delta _{t}$, we have
\begin{equation}\label{0615} \overline{\nu }_{t}^{*}=\overline{\nu }_{t}+L_{t+1}+R_{t+1}\le (1+w_1)\delta _{t}=(1+w_1)(\delta _{t}^{*}+L_{t+1}+R_{t+1}).\end{equation}
Notice that $m_k=\sum_{j=1}^{k} i_j$, $b_k=m_{p_k}-1\ge m_{p_k-1}$, by (\ref{0613}), (\ref{0614}) and (\ref{0615}),  we obtain that if  $k\ge N$, then\begin{align*}
&\frac{1}{b_k} \sum_{t=1}^{p_{k}-1} \frac{\overline{\nu } _{t}^{*}}{\delta _{t}^{*}+L_{t+1}+R_{t+1}}\\
&\le \frac{1}{b_k} \Big[\sum_{\substack{t=1 \\
 {\frac{\delta _{t}^{*}+L_{t+1}+R_{t+1}}{\delta _{t-1}^{*}}<\bar{\theta}  }}}^{ p_{k}-1}\big(1+w_1 \big)+\sum_{\substack{t=1 \\
{\frac{\delta _{t}^{*}+L_{t+1}+R_{t+1}}{\delta _{t-1}^{*}}\ge \bar{\theta} }}}^{p_{k}-1}\frac{\overline{\nu } _{t}^{*}}{\delta _{t}^{*}+L_{t+1}+R_{t+1}} \Big]\\
&\le \frac{1}{b_k}\sum_{t=1}^{p_{k}-1}\frac{\epsilon i_t}{4} + \frac{1}{b_k}\sum_{t=1}^{p_{k}-1}\frac{\overline{\nu } _{t}^{*}}{\delta _{t-1}^{*}}\cdot \frac{1}{\bar{\theta}}\\
&<\frac{\epsilon }{4} +\frac{\epsilon }{4}\\
&=\frac{\epsilon }{2},\end{align*}
which implies $\frac{1}{b_k} \sum_{t=1}^{p_{k}-1} \frac{\overline{\nu } _{t}^{*}}{\delta _{t}^{*}+L_{t+1}+R_{t+1}} \to 0$ as $k\to \infty$. Then by (\ref{0611}), we obtain \begin{equation}\label{0616} \frac{1}{b_k} \sum_{t=0}^{m_{p_{k}-1}-1}\lambda _t\to 0\end{equation} as $k\to \infty$.

Notice that if $m_{p_{k}}=m_{p_{k}-1}+1$, then $b_k=m_{p_{k}-1}$, which yields $b_k-1=m_{p_{k}-1}-1$, therefore \begin{equation}\label{0617} \frac{1}{b_k} \sum_{t=0}^{b_k-1}\lambda _t= \frac{1}{b_k}\sum_{t=0}^{m_{p_{k}-1}-1}\lambda _t. \end{equation} If $m_{p_{k}}>m_{p_{k}-1}+1$, then $b_k-1\ge m_{p_{k}-1}$, which implies
\begin{equation}\label{0618}
\frac{1}{b_k} \sum_{t=0}^{b_k-1}\lambda _t=\frac{1}{b_k}\big(\sum_{t=0}^{m_{p_{k}-1}-1}\lambda _t+\sum_{t=m_{p_{k}-1}}^{b_k-1}\lambda _t \big)= \frac{1}{b_k}\sum_{t=0}^{m_{p_{k}-1}-1}\lambda _t+\frac{1}{b_k}\sum_{t=m_{p_{k}-1}}^{b_k-1}\lambda _t. \end{equation}
By (\ref{0610}) and (\ref{0615}), we have \begin{equation}\label{0619} \sum_{t=m_{p_{k}-1}}^{b_k-1}\lambda _t\le
\sum_{t=m_{p_{k}-1}}^{m_{p_{k}}-1}\lambda _t\le\frac{2\overline{\nu } _{p_{k}}^{*}}{\delta _{p_{k}}^{*}+L_{p_k+1}+R_{p_k+1}}
\le 2\big(1+w_1 \big).\end{equation}
Combining (\ref{0616}), (\ref{0618}) and (\ref{0619}), we have $$\lim\limits_{k\to \infty}\frac{1}{b_k}\sum_{t=0}^{b_k-1}\lambda_t=0.
$$

If condition {\rm(B)} of Theorem {\rm\ref{thm2}} holds, then $\overline{\nu }_k\le w_2\underline{\nu }_k  $. For any $k\ge1 $ and $m_{k-1}\le m< m_k$, write $\varphi _{m}=\min \{\frac{\xi_{\sigma,l}^*}{|I|}:I\in \mathcal{S}_m,\sigma \in D_{k-1}, 1\le l\le n_k-1  \} $. By (\ref{048}) and (4) of Lemma {\ref{lem06}}, we have \begin{equation}\label{0620}\lambda _{m}\le 2 w_{2}^{2}\cdot \varphi _{m}. \end{equation}
Let $\{b_k\}_{k\ge 0}$ be the sequence in ($\bar a$), then for any $0\le t\le b_k-1$ and $I\in \mathcal{S}_t$, we have $\frac{l(S_{t+1} )}{l(S_t)} \le \frac{|I|-\varphi_t\cdot |I|}{|I|} =1-\varphi_t$, which yields \begin{equation}\label{0621} l(S_{b_k} )\le \prod_{t=0}^{b_k-1}(1-\varphi_t) . \end{equation}

Notice that $\log_{T}{(1-x)} \le -x$ for any  $x\in [0,1)$, combining ($\bar a$) and (\ref{0621}), we obtain
$$0\ge \lim_{k \to \infty} -\frac{1}{b_k} \sum_{t=0}^{b_k-1}\varphi _{t}\ge
\lim_{k \to \infty} \frac{1}{b_k} \sum_{t=0}^{b_k-1}\log_{T}{(1-\varphi _{t})}\ge
\lim_{k \to \infty} \frac{\log_{T}{l(S_{b_k})}}{b_k}=0,$$
which implies \begin{equation}\label{0622}  \lim_{k \to \infty} \frac{1}{b_k} \sum_{t=0}^{b_k-1}\varphi _{t}=0. \end{equation}

By (\ref{0620}) and (\ref{0622}), we have $$0\le \lim_{k \to \infty} \frac{1}{b_k} \sum_{t=0}^{b_k-1}\lambda _{t}\le 2w_{2}^{2}\cdot \lim_{k \to \infty} \frac{1}{b_k} \sum_{t=0}^{b_k-1}\varphi _{t} =0,$$
which yields $$\lim_{k \to \infty} \frac{1}{b_k} \sum_{t=0}^{b_k-1}\lambda _{t}=0.$$

($\bar c$) Take a positive number $\varepsilon>1$ small enough satisfying $\log_T(1-x)\ge -2x$ for any $x\in [0, (T^{2}+1) \varepsilon )$. For any $\epsilon>0$, write $$L_{\epsilon }(m)= \#(\{ 0\le j\le m-1:\lambda _j<\epsilon \}).$$
By Lemma \ref{lem08} and ($\bar b$), we have $\lim\limits_{k \to \infty}\frac{L_{\varepsilon }(b_k)}{b_k}=1$, which yields \begin{equation}\label{0623}
\lim\limits_{k \to \infty}\big(1-\frac{L_{\varepsilon }(b_k)}{b_k}\big)=0.\end{equation}

If $\lambda _j<\varepsilon $, we have \begin{align*}
0\ge \frac{1}{b_k}\sum_{\substack{j=0\\\lambda _j<\varepsilon }}^{b_k-1} \log_{T}{\Big[1-\big(T^{2}+1 \big)\lambda _j \Big]}
\ge \frac{-2\big(T^{2}+1 \big)}{b_k}\sum_{\substack{j=0\\\lambda _j<\varepsilon  }}^{b_k-1}\lambda _j
\ge -2\big(T^{2}+1 \big)\big(\frac{1}{b_k}\sum_{j=0}^{ b_k-1}\lambda _j \big).\end{align*}
Combining ($\bar b$), we obtain $$\lim_{k \to \infty}\frac{1}{b_k} \sum_{\substack{j=0\\\lambda _j<\varepsilon } }^{b_k-1}\log_T[1-(T^{2}+1)\lambda_j ] =0,$$ which implies \begin{equation}\label{0624}
\lim_{k \to \infty} \Big[\prod_{\substack{j=0\\\lambda _j<\varepsilon } }^{b_k-1}\big(1-(T^{2}+1)\lambda _j \big) \Big]^{\frac{1}{b_k}}=1.\end{equation}

If condition {\rm(A)} of Theorem {\rm\ref{thm2}} holds, then for any $j\ge 1$, each basic interval of $S_{j-1}$ contains at most $T^2$ basic intervals of $S_j$. Thus, we have
\begin{align*} \frac{l(S_{j})}{l(S_{j-1})} \le \min\left \{ 1,T^{2}\overline{\Lambda}_j\right \}. \end{align*}
Since $\{l(S_{j})\}_{j\ge 0}$ is a decreasing sequence, we have \begin{equation}\label{0625} l(S_{b_k})\le \prod_{j\in A}\big(T^{2}\overline{\Lambda}_j \big) \end{equation}
for any set $A\subseteq \left \{ 1,2,\cdots ,b_k \right \}$. Notice that for any $j\ge 1$, \begin{equation}\label{0626}
\tau _{j-1}=\min\left \{ \frac{\sum_{i=1}^{N(\tilde{I}) } |\tilde{I}_i| }{|\tilde{I}|}:\tilde{I}\in \mathcal{S}_{j-1} \right \} \ge
\frac{\min\limits_{\tilde{I}\in\mathcal{S}_{j}}|\tilde{I}|}{\max\limits_{\tilde{I}\in\mathcal{S}_{j-1}}|\tilde{I}|}
=\underline{\Lambda}_{j},\end{equation}
and by (4) of Lemma \ref{lem06},  we obtain \begin{equation}\label{0627}
\overline{\Lambda}_{j}\le 4(1+w_1)^{2}\underline{\Lambda}_{j} \end{equation}
for any $j\ge 1$.
Then by (\ref{0625}), (\ref{0626}) and (\ref{0627}),  we have  \begin{equation}\label{0628} l(S_{b_k})\le \prod_{j\in A}\big(T^{2} \overline{\Lambda}_j \big) \le \prod_{j\in A}\big(4T^{2}(1+w_1)^{2}\underline{\Lambda}_j \big) \le \prod_{j\in A}\big(4T^{2}(1+w_1)^{2}\tau_{j-1}\big).\end{equation}

Combining (\ref{0623}), (\ref{0624}), (\ref{0628}), Lemma \ref{lem07} and ($\bar a$), we obtain that
\begin{align*}
1&\ge \lim_{k \to \infty} \big(\prod_{j=0}^{b_k-1}\tau _j \big)^{\frac{1}{b_k} }\\
&=\lim_{k \to \infty} \big(\prod_{\substack{j=0\\\lambda _j<\varepsilon } } ^{b_k-1}\tau _j \big)^{\frac{1}{b_k} }
\big(\prod_{\substack{j=0\\\lambda _j\ge \varepsilon }  }^{b_k-1}\tau _j \big)^{\frac{1}{b_k} }\\
&\ge \lim_{k \to \infty} \Big[\prod_{\substack{j=0\\\lambda _j<\varepsilon} }^{b_k-1} \big(1-(T^{2}+1)\lambda _j \big) \Big]^{\frac{1}{b_k} } \Big(\prod_{\substack{j=0\\\lambda _j\ge \varepsilon} }^{b_k-1}\frac{1}{4T^{2}(1+w_1)^{2}} \Big)^{\frac{1}{b_k} }\Big(l(S_{b_k})\Big)^{\frac{1}{b_k} }\\
&\\&
\ge \lim_{k \to \infty} \Big[\prod_{\substack{j=0\\\lambda _j<\varepsilon } }^{b_k-1}\big(1-(T^{2}+1)\lambda _j \big) \Big]^{\frac{1}{b_k} }
\Big(\frac{1}{4T^{2}(1+w_1)^{2}} \Big)^{1-\frac{L_{\varepsilon } (b_k)}{b_k} }\Big(l(S_{b_k})\Big)^{\frac{1}{b_k} }\\
&=1.\end{align*}
Therefore, $\lim\limits_{k \to \infty} (\prod_{j=0}^{b_k-1}\tau _j)^{\frac{1}{b_k}} =1$, which implies
$$\lim\limits_{k \to \infty} \frac{1}{b_k}\sum_{j=0}^{b_k-1}\log_{T}{\tau _j}=0.$$

If condition {\rm(B)} of Theorem {\rm\ref{thm2}} holds, then by the similar argument  to the case that the condition {\rm(A)} of Theorem {\rm\ref{thm2}} holds(replace $(1+w_1)$ by $w_2$ ), we can also obtain $\lim\limits_{k \to \infty} \frac{1}{b_k}\sum_{j=0}^{b_k-1}\log_{T}{\tau _j}=0.$

($\bar d$)  If condition {\rm(A)} of Theorem {\rm\ref{thm2}} holds, then by (4) of Lemma \ref{lem06}, we have for any $j\ge 1$, $\tilde{I}\in \mathcal {S}_j$, and $\tilde{J}\in \mathcal{S}_{j-1}$ with $\tilde{I}\subset \tilde{J}$,
\begin{equation}\label{0629}
\pi _j\le \frac{\max\limits_{I^{'}\in\mathcal{S}_j}|I^{'}|}{\min\limits_{J^{'}\in\mathcal{S}_{j-1}}|J^{'}|}
\le \frac{2(1+w_1) \min\limits_{I^{'}\in\mathcal{S}_j}|I^{'}|}{\frac{1}{2(1+w_1)} \max\limits_{J^{'}\in\mathcal{S}_{j-1}}|J^{'}|}
\le \frac{4(1+w_1)^{2}|\tilde{I}|}{|\tilde{J}|}.
\end{equation}

Taking $I^{*}\in \mathcal {S}_j$ and $J^{*}\in \mathcal {S}_{j-1}$ with $I^{*}\subset J^{*}$, such that $\pi _j=\frac{|I^{*}|}{|J^{*}|}$. According to the reconstruction process, $J^{*}$ contains at least 2 basic intervals of $S_j$, suppose that $I^{**}\in \mathcal {S}_j$, $I^{**}\neq I^{*}$ with $I^{**}\subset J^{*}$. Then by (\ref{0629}), we have
\begin{equation*} 1\ge\frac{|I^{*}|+|I^{**}|}{|J^{*}|} =\pi _j+\frac{|I^{**}|}{|J^{*}|}\ge \pi _j+\frac{\pi _j}{4(1+w_1)^{2}}
=\frac{1+4(1+w_1)^{2}}{4(1+w_1)^{2}}\pi _j. \end{equation*}
Let $\beta \in (\frac{4(1+w_1)^{2}}{1+4(1+w_1)^{2}} ,1)$, then we have $$\lim\limits_{k\to \infty}\frac{S_\beta(b_k)}{b_k} =\lim\limits_{k\to \infty}\frac{\#( \{ 1\le j\le b_k:\pi _j<\beta  \}) }{b_k} =1.$$

If condition {\rm(B)} of Theorem {\rm\ref{thm2}} holds, by the similar argument  to the case that the condition {\rm(A)} of Theorem {\rm\ref{thm2}} holds(replace $(1+w_1)$ by $w_2$), we can also obtain
$\lim\limits_{k\to \infty}\frac{S_\beta(b_k)}{b_k}=1$.

\end{proof}

\subsection{The probability measure on the quasisymmetric image sets}
Let $E\in \mathcal{M}(I_{0},\left\{n_{k}\right\},\left\{c_{k}\right\})$  satisfy the conditions of Theorem \ref{thm2}, $\{S_{m}\}_{m\ge0}$ and $\{b_k\}_{k\ge 0}$ be the sequences in Lemma \ref{lem06} and lemma \ref{lem09} respectively, $f$ be a \text{\rm1}-dimensional quasisymmetric mapping. In order to estimate the lower bound of the packing dimension of $f(E)$ by Lemma \ref{lem03}, we need to define a probability Borel measure on the quasisymmetric image set $f(E)$.

For any $m\ge0$ and any  basic interval of $S_m$, which denoted by $I_m$, write $J_{m}=f(I_m)$. Obviously, $f(S_m)$ is the union of the images of the basic intervals of $S_m$ under $f$, and the interiors of the images are disjoint since $f$ is a homeomorphism. Similarly, $J_m$ is called a basic interval of $f(S_m)$, and write $f(\mathcal{S}_{m})=\{B: B ~~~~\text{is a basic interval of}~~~~ f(S_{m})\}$. Write $J_{m,1},J_{m,2}\cdots,J_{m,N(J_{m})}$ for all basic intervals of $f(S_{m+1})$ contained  in $J_{m}$, where $J_{m,1},J_{m,2}\cdots,J_{m,N(J_{m})}$  are listed from left to right, then $N(J_{m})\le T^{2}$ by (3) of Lemma \ref{lem06}.

For any $d\in(0,1)$, $m\ge0$ and $1\le i\le N(J_{m})$, according to the measure extension theorem, we can define $\mu_{d}$, where $\mu_{d}$ is a  probability Borel measure  on the quasisymmetric image set $f(E)$,  fulfilling $\mu _d(f(S_0))=1$ and
\begin{equation}\label{0630}
\mu_d(J_{m,i})=\frac{\left | J_{m,i} \right |^d }{\sum_{j=1}^{N(J_{m})}\left | J_{m,j} \right |^d }\mu_d(J_{m}).
\end{equation}
\bigskip

For any $d\in(0,1)$, $k\ge 1$ and any basic interval of $f(S_{b_k})$, denoted by $U$,  we proceed to estimate  $\mu_d(U)$.

\begin{proposition}\label{pro1} For any $d\in(0,1)$ and $k\ge1$, let $U$ be a basic interval  of $f(S_{b_k})$, then there is a positive constant $C$ $($independent of $U$$)$,  such that $\mu _d(U)\le C|U|^{d}$.
\end{proposition}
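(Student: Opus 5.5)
The plan is to write $\mu_d(U)/|U|^d$ as a telescoping product along the chain of ancestors of $U$. Then I will show that this product is bounded uniformly in $k$, using the asymptotic statements of Lemma \ref{lem09}.

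\textbf{Step 1: telescoping.} Let $U=J_{b_k}\subset J_{b_k-1}\subset\cdots\subset J_0=f(S_0)$, where $J_m\in f(\mathcal S_m)$. By (\ref{0630}),
\begin{equation*}
\frac{\mu_d(J_{m+1})}{|J_{m+1}|^d}=\frac{\mu_d(J_m)}{|J_m|^d}\cdot \Theta_m,\qquad \Theta_m:=\frac{|J_m|^d}{\sum_{j=1}^{N(J_m)}|J_{m,j}|^d}.
\end{equation*}
Hence $\mu_d(U)\le |J_0|^{-d}|U|^d\prod_{m=0}^{b_k-1}\Theta_m$. It therefore suffices to prove $\sup_k \max_U \prod_{m<b_k}\Theta_m<\infty$. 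For each fixed $k$ there are only finitely many $U$, so only large $k$ matter.

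\textbf{Step 2: a general bound for $\Theta_m$.} In every case $\sum_j|J_{m,j}|^d\ge \max_j|J_{m,j}|^d$. Let $\tilde I\supset\tilde I_j$ be the preimages. By Lemma \ref{lem04},
\begin{equation*}
|J_{m,j}|/|J_m|\ge \lambda(|\tilde I_j|/|\tilde I|)^q.
\end{equation*}
The largest child satisfies $|\tilde I_j|\ge \tau_m|\tilde I|/T^2$, because $N(\tilde I)\le T^2$. This gives
\begin{equation*}
\log_T\Theta_m\le C_1-dq\log_T\tau_m,
\end{equation*}
with $C_1$ depending only on $\lambda,q,T$.

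\textbf{Step 3: the bound for good levels.} Call $m$ good if $\lambda_m<\varepsilon$ and $\pi_m<\beta$, where $\beta$ is from ($\bar d$) and $\varepsilon$ is small and still to be chosen. At a good level Lemma \ref{lem04} gives two facts.
\begin{itemize}
\item[(i)] Each child image has relative size $|J_{m,j}|/|J_m|\le 4\beta^p=:\beta'$. If $\beta'\ge1$, I first pass to a fixed iterate, i.e. use bounded-depth descendants, to make it less than $1$.
\item[(ii)] Each gap of $\tilde I$ has relative length at most $\lambda_m$, so its image has relative length at most $4\lambda_m^p$. There are at most $T^2+1$ gaps, so $\sum_j|J_{m,j}|\ge (1-4(T^2+1)\varepsilon^p)|J_m|$.
\end{itemize}
Since $x^d\ge x\,(\beta'|J_m|)^{d-1}$ for $x\le\beta'|J_m|$, we get
\begin{equation*}
\Theta_m\le \beta'^{\,1-d}\big(1-4(T^2+1)\varepsilon^p\big)^{-1}=:\gamma .
\end{equation*}
Here $d<1$ is essential. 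Choosing $\varepsilon$ small makes $\gamma<1$.

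\textbf{Step 4: summation.} By Lemma \ref{lem08} applied with ($\bar b$), and by ($\bar d$), the number $g_k$ of good levels below $b_k$ satisfies $g_k/b_k\to1$. Combining Steps 2 and 3,
\begin{equation*}
\log_T\prod_{m<b_k}\Theta_m\le g_k\log_T\gamma+(b_k-g_k)C_1-dq\sum_{m<b_k}\log_T\tau_m .
\end{equation*}
For the last sum I use $\log_T\tau_m\le 0$ to extend it from the bad levels to all $m<b_k$. By ($\bar c$) that sum is $o(b_k)$. Hence the right side equals $b_k(\log_T\gamma+o(1))\to-\infty$. The product is therefore bounded for large $k$, and taking the maximum with the finitely many small $k$ gives $C$.

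\textbf{Main obstacle.} I expect Step 3 to be the delicate point. There one must make sure that the image ratio constant $4\beta^p$ coming from Lemma \ref{lem04} is genuinely below $1$. If it is not, one has to group several consecutive levels: over $r$ levels Lemma \ref{lem04} applied to the $r$-fold nested interval gives ratio at most $4\beta^{rp}<1$. In that case the regrouping must be kept compatible with the density statements of Lemma \ref{lem09}.
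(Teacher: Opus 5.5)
Steps 1, 2 and 4 are sound and follow the same scheme as the paper: telescoping, a crude bound via $\tau_m$ and the lower estimate of Lemma \ref{lem04} at the $o(b_k)$ bad levels, and a definite gain at a density-one set of good levels. The gap is Step 3, and the case you treat as exceptional is in fact the only case. The $\beta$ of $(\bar d)$ in Lemma \ref{lem09} satisfies $\beta>\frac{4(1+w_1)^2}{1+4(1+w_1)^2}>\frac45$ (resp.\ the same with $w_2\ge1$), and $p\le1$, so $\beta'=4\beta^p\ge4\beta>3$. A smaller $\beta$ does not help in general: if $n_k\equiv2$, then $\pi_j$ is close to $\frac12$ at most levels. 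More fundamentally, the upper estimate $4(|I'|/|I|)^p$ is vacuous for subintervals occupying a non-small proportion of $I$. So it can never show that the largest child of $J_m$ is a proper fraction of $J_m$, which is exactly what $\gamma<1$ requires.

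Your grouping remedy does not repair this for the measure $\mu_d$ in the statement. $\Theta_{m+i}$ depends only on how $J_{m+i}$ splits into its immediate children. The bound $4\beta^{rp}$ only controls $r$-step descendants, i.e.\ $\prod_i|J_{m+i+1}|/|J_{m+i}|$ along a chain. Upper-bound information alone is consistent with every $J_{m+i}$ having one child of image proportion arbitrarily close to $1$ (whose own children then spread out), while your chain passes through the small siblings. Then every $\Theta_{m+i}\approx1$ and no gain appears. Regrouping would mean splitting mass directly over $r$-step descendants, which defines a different measure.

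The missing idea is to apply the \emph{lower} estimate of Lemma \ref{lem04} to the siblings of the largest child, as the paper does. First fix an index: the children of $J_m$ lie in $f(\mathcal S_{m+1})$, so the relevant condition is $\pi_{m+1}<\beta$, not $\pi_m<\beta$; densities are unaffected.

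Suppose $\lambda_m<\varepsilon$ and $\pi_{m+1}<\beta$. The preimage of the largest child has proportion $<\beta$, and the gaps have total proportion at most $(T^2+1)\varepsilon$. So the remaining children have total preimage proportion at least $1-\beta-(T^2+1)\varepsilon>0$. Apply $|f(I')|/|f(I)|\ge\lambda(|I'|/|I|)^q$ to each of them and Jensen over at most $T^2-1$ terms. Their images then fill at least $c=\lambda(T^2-1)^{1-q}\big(1-\beta-(T^2+1)\varepsilon\big)^q$ of $J_m$.

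Hence every child of $J_m$ has image proportion at most $1-c$. Your inequality $x^d\ge x(\beta'|J_m|)^{d-1}$ with $\beta'=1-c$ then gives $\gamma=(1-c)^{1-d}\big(1-4(T^2+1)\varepsilon^p\big)^{-1}$. This is $<1$ for small $\varepsilon$, since $c$ stays bounded below as $\varepsilon\to0$. The paper uses the same lower bound, in the form $\sum_i|J_{m,i}|^d\ge(1+c')^{1-d}(\sum_i|J_{m,i}|)^d$ for a constant $c'>0$.

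With this replacement your argument is complete. Absorbing the gap loss at good levels into the fixed constant $\gamma$, rather than tracking the factors $(1-\lambda_j^p)$ and invoking $(\bar b)$ again as the paper does, is a harmless simplification.
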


\begin{proof}
If condition {\rm(A)} or condition {\rm(B)} of Theorem {\rm\ref{thm2}} holds, then for any $d\in(0,1)$, $k\ge 1$, let $U=J_{b_k}$ be a basic interval of $f(S_{b_k})$. For any $0\le j\le b_k-1$, let $J_j$ be a basic interval of $f(S_j)$ satisfying $$U=J_{b_k}\subset J_{b_k-1}\subset \cdots \subset J_1\subset J_0=f(S_0).$$  Without loss of generality, suppose $J_0=f(S_0)=[0,1]$.
According to the definition of $\mu_d$, we obtain
\begin{align*}
\mu_d(J_{b_k})
&=\frac{|J_{b_k}|^{d}}{\sum_{i=1}^{N(J_{b_k-1})}|J_{b_k-1,i}|^{d} } \mu _d(J_{b_k-1})\\
&=\frac{|J_{b_k}|^{d}}{\sum_{i=1}^{N(J_{b_k-1})}|J_{b_k-1,i}|^{d} } \cdot \frac{|J_{b_k-1}|^{d}}{\sum_{i=1}^{N(J_{b_k-2})}|J_{b_k-2,i}|^{d} }\cdots \frac{|J_1|^{d}}{\sum_{i=1}^{N(J_0)}|J_{0,i}|^{d} } \cdot |J_0|^{d}, \end{align*}
which yields \begin{align}\label{0631}
\frac{\mu _d(J_{b_k})}{|J_{b_k}|^{d}} =\prod_{j=0}^{b_k-1} \frac{|J_j|^{d}}{\sum_{i=1}^{N(J_j)}|J_{j,i}|^{d} }.
\end{align}
Therefore, to prove Proposition \ref{pro1}, we only need to prove \begin{align}\label{0632}
\liminf\limits_{k\to \infty}\big(\prod_{j=0}^{b_k-1} \frac{\sum_{i=1}^{N(J_j)}|J_{j,i}|^{d}}{|J_j|^{d} } \big)^{\frac{1}{b_k}}>1.
\end{align}

We begin to estimate $\frac{\sum_{i=1}^{N(J_j)}|J_{j,i}|^{d}}{|J_j|^{d} }$ for any $0\le j\le b_k-1$. By the argument above, for a basic interval of $f(S_j)$ which is denoted by  $J_j$, $J_{j,1},J_{j,2},\cdots ,J_{j,N(J_j)}$(listed from left to right)  are all of the basic intervals of $f(S_{j+1})$ contained in $J_j$, and $I_j=f^{-1}(J_j)$, where $I_j$ is a basic interval of $S_j$. For any $1\le l\le N(J_j)-1$,  write $$G_{j,0}=\big[\min(J_j),\min(J_{j,1})\big),~G_{j,N(J_j)}=\big(\max(J_{j,N(J_j)}),\max(J_j)\big],$$
$$G_{j,l}=\big(\max(J_{j,l}),\min(J_{j,l+1})\big).$$
Then $J_j=\big(\bigcup_{i=1}^{N(J_j)}J_{j,i} \big)\bigcup \big(\bigcup_{l=0}^{N(J_j)}G_{j,l} \big)$, and we call $G_{j,l}(0\le l\le N(J_j))$ the $(j+1)$-order gap contained in $J_j$. Notice that there may exists $0\le l\le N(J_j)$ satisfying $G_{j,l}=\emptyset$.
Let $I_{j,i}=f^{-1}(J_{j,i})\subseteq S_{j+1}$ for any $1\le i\le N(J_j)$, and let $L_{j,l}=f^{-1}(G_{j,l})\subseteq I_j-S_{j+1}$ for any $0\le l\le N(J_j)$. Obviously, any $I_{j,i}(1\le i\le N(J_j))$ is basic interval of $S_{j+1}$ contained in $I_j$.

Without loss of generality, suppose $|J_{j,1}|=\max\limits _{1\le i\le N(J_j)}\{|J_{j,i}|\}$. Then $0<\frac{|J_{j,i}|}{|J_{j,1}|}\le 1$ for any $1\le i\le N(J_j)$. Since $0<d<1$, we have $(\frac{|J_{j,i}|}{|J_{j,1}|})^{d} \ge \frac{|J_{j,i}|}{|J_{j,1}|}$, which implies \begin{equation}\label{0633}
\begin{aligned}
\frac{\sum_{i=1}^{N(J_j)}|J_{j,i}|^{d} }{\big(\sum_{i=1}^{N(J_j)}|J_{j,i}| \big)^{d}}
&=\frac{1+\big(\frac{|J_{j,2}|}{|J_{j,1}|} \big)^{d}+ \big(\frac{|J_{j,3}|}{|J_{j,1}|} \big)^{d}+\cdots+ \big(\frac{|J_{j,N(J_j)}|}{|J_{j,1}|} \big)^{d}}
{\big(1+\frac{|J_{j,2}|}{|J_{j,1}|}+\frac{|J_{j,3}|}{|J_{j,1}|}+\cdots +\frac{|J_{j,N(J_j)}|}{|J_{j,1}|}\big)^{d}}\\
&\ge \big(1+\frac{|J_{j,2}|}{|J_{j,1}|}+\frac{|J_{j,3}|}{|J_{j,1}|}+\cdots +\frac{|J_{j,N(J_j)}|}{|J_{j,1}|}\big)^{1-d}\\
&\ge 1.
\end{aligned} \end{equation}
Hence, \begin{align}\label{0634}
\frac{\sum_{i=1}^{N(J_j)}|J_{j,i}|^{d} }{|J_{j}|^{d}}=
\frac{\sum_{i=1}^{N(J_j)}|J_{j,i}|^{d} }{\big(\sum_{i=1}^{N(J_j)}|J_{j,i}|\big)^{d}}\cdot
\frac{\big(\sum_{i=1}^{N(J_j)}|J_{j,i}|\big)^{d}}{|J_{j}|^{d}} \ge
\frac{\big(\sum_{i=1}^{N(J_j)}|J_{j,i}|\big)^{d}}{|J_{j}|^{d}}. \end{align}

Let $\beta \in (0,1)$ and  $p \in (0,1]$ be the constant in ($\bar d$) of Lemma \ref{lem09} and  the constant in Lemma \ref{lem04}  respectively, and  $\epsilon \in (0,1)$ be a constant small enough which  satisfies
\begin{enumerate}
\item[\textup{$(1^*)$}]$\epsilon <\frac{1-\beta }{T^{2}+1}$;
\item[\textup{($2^*$)}]$(1-\bar{Q}x^{p})\ge (1-x^{p})^{\bar{Q}+1}$ for any $x\in [0,\epsilon)$, where $\bar{Q}=4(T^{2}+1)$;
\item[\textup{($3^*$)}]$\log_T{(1-x^{p})}\ge-2x^{p}$  for any $x\in [0,\epsilon)$.
\end{enumerate}

For any $k\ge 1$, we use the following notations from Lemma \ref{lem09} and its proof:
$$L_\epsilon (b_k)= \#(\{ 0\le j\le b_k-1:\lambda _j<\epsilon \}),~ S_\beta (b_k)=\#(\{ 1\le j\le b_k:\pi_j<\beta\}),$$
and write
$$M_{\epsilon,\beta}(b_k)=\#( \{1\le j\le b_k:\lambda _{j-1}<\epsilon,~ \pi_j<\beta \}).$$
By Lemma \ref{lem08} and $(\bar b)$ of Lemma \ref{lem09}, we have  \begin{equation}\label{0635} \lim\limits_{k\to \infty}\frac{L_\epsilon (b_k)}{b_k}=1.\end{equation}
By $(\bar d)$ of Lemma \ref{lem09}, we obtain $\lim\limits_{k\to \infty}\frac{S_\beta (b_k)}{b_k}=1$,  which yields \begin{equation}\label{0636}  \lim\limits_{k\to \infty}\frac{M_{\epsilon,\beta}(b_k)}{b_k}=1.\end{equation}

To get (\ref{0632}), we distinguish $\prod_{j=0}^{b_k-1} \frac{\sum_{i=1}^{N(J_j)}|J_{j,i}|^{d}}{|J_j|^{d} }$ into two cases. We write $$\mathrm{I}=\prod_{\substack{j=0\\\lambda _j<\epsilon } }^{b_k-1}
\frac{\sum_{i=1}^{N(J_j)}|J_{j,i}|^{d}}{|J_{j}|^{d}},~~ \mathrm{II}=\prod_{\substack{j=0\\\lambda _j\ge \epsilon } }^{b_k-1}
\frac{\sum_{i=1}^{N(J_j)}|J_{j,i}|^{d}}{|J_{j}|^{d}},$$
then \begin{align}\label{0637}\prod_{j=0}^{b_k-1} \frac{\sum_{i=1}^{N(J_j)}|J_{j,i}|^{d} }{|J_{j}|^{d}} =\mathrm{I}\cdot \mathrm{II}.\end{align}

\textbf{Case 1:} If $\lambda _j<\epsilon$. In this case, we  write $$\mathrm{I}_1=\prod_{\substack{j=0\\\lambda _j<\epsilon,\pi _{j+1}<\beta}}^{b_k-1}\frac{\sum_{i=1}^{N(J_j)}|J_{j,i}|^{d}}{|J_{j}|^{d}},  ~~\mathrm{I}_2=\prod_{\substack{j=0\\\lambda _j<\epsilon,\pi _{j+1}\ge \beta}}^{b_k-1}\frac{\sum_{i=1}^{N(J_j)}|J_{j,i}|^{d}}{|J_{j}|^{d}},$$
then \begin{align}\label{0638}\mathrm{I}=\mathrm{I}_1\cdot \mathrm{I}_2.\end{align}

Notice that $\frac{|L_{j,l}|}{|I_j|} \le \lambda _j$ for any $0\le l\le N(J_j)$. By Lemma \ref{lem04}, $\frac{|f(L_{j,l})|}{|f(I_j)|} \le 4\big(\frac{|L_{j,l}|}{|I_j|}\big)^{p}$,  which implies $$\frac{|G_{j,l}|}{|J_j|} \le 4\lambda _{j}^{p}$$ for any $0\le l\le N(J_j)$.
Since $N(J_j)\le T^{2}$,  by $\lambda _j<\epsilon$ and ($2^*$), we have \begin{equation}
\begin{aligned}\label{0639}
\frac{\big(\sum_{i=1}^{N(J_j)}|J_{j,i}|\big)^{d}}{|J_{j}|^{d}}
&=\big(\frac{|J_j|-\sum_{l=0}^{N(J_j)}|G_{j,l}| }{|J_j|} \big)^{d}\\
&\ge\big(1-4(T^{2}+1)\lambda _{j}^{p}\big)^{d}\\
&\ge\Big[\big(1-\lambda _{j}^{p}\big)^{4(T^{2}+1)+1}\Big]^{d}\\
&=\big(1-\lambda _{j}^{p}\big)^{(\bar{Q}+1)d}.
\end{aligned} \end{equation}
Moreover, if $\lambda _j<\epsilon $ and $\pi _{j+1}<\beta$, since $q\ge 1$ and $N(J_j)\le T^{2}$, by Lemma \ref{lem04} and the Jensen inequality, we have \begin{align}\label{0640}
\frac{\sum_{l=2}^{N(J_j)}|J_{j,l}|}{|J_{j}|}\ge \lambda \cdot
\frac{\sum_{l=2}^{N(J_j)}|I_{j,l}|^{q}}{|I_{j}|^{q}}\ge
\big(T^{2}-1\big)^{1-q}\cdot \lambda \big(\frac{\sum_{l=2}^{N(J_j)}|I_{j,l}|}{|I_{j}|}\big)^{q}. \end{align}
Notice that $\frac{|I_{j,1}|}{|I_j|}\le\pi _{j+1}<\beta$, and $\frac{|L_{j,l}|}{|I_j|}\le \lambda _j<\epsilon $ for any $0\le l\le N(J_j)\le T^{2}$, it follows that \begin{align}\label{0641}
\frac{\sum_{l=2}^{N(J_j)}|I_{j,l}|}{|I_{j}|}=\frac{|I_j|-|I_{j,1}|-\sum_{l=0}^{N(J_j)}|L_{j,l}|}{|I_j|}
\ge 1-\beta -(T^{2}+1)\epsilon. \end{align}
Combining (\ref{0640}) and (\ref{0641}), we have \begin{align}\label{0642} \frac{\sum_{l=2}^{N(J_j)}|J_{j,l}|}{|J_{j}|}\ge \big(T^{2}-1\big)^{1-q}\cdot \lambda \cdot \big(1-\beta -(T^{2}+1)\epsilon \big)^{q}. \end{align}

By Lemma \ref{lem04}, we have \begin{align}\label{0643}
\frac{|J_{j,1}|}{|J_j|} =\frac{|f(I_{j,1})|}{|f(I_j)|}\le 4\big(\frac{|I_{j,1}|}{|I_j|}\big)^{p}\le 4\beta ^{p}. \end{align}
Combining (\ref{0642}) and (\ref{0643}), we obtain \begin{align*}
\frac{\sum_{l=2}^{N(J_j)}|J_{j,l}|}{|J_{j,1}|}=
\frac{\sum_{l=2}^{N(J_j)}|J_{j,l}|}{|J_j|}\cdot\frac{|J_j|}{|J_{j,1}|}&\ge
\frac{|J_j|}{|J_{j,1}|} \cdot \frac{\lambda \big(1-\beta -(T^{2}+1)\epsilon \big)^{q}}{\big(T^{2}-1 \big)^{q-1}}\\ &\ge
\frac{\lambda \big(1-\beta -(T^{2}+1)\epsilon \big)^{q}}{4\beta ^{p}\big(T^{2}-1 \big)^{q-1}}. \end{align*}
Notice that $0<d<1$, by (\ref{0633}), we have  \begin{equation}\label{0644}
\begin{aligned}
\frac{\sum_{i=1}^{N(J_j)}|J_{j,i}|^{d} }{\big(\sum_{i=1}^{N(J_j)}|J_{j,i}| \big)^{d}}
&\ge \big(1+\frac{\sum_{l=2}^{N(J_j)}|J_{j,l}|}{|J_{j,1}|} \big)^{1-d}\\
&\ge \Big[1+\frac{\lambda \big(1-\beta -(T^{2}+1)\epsilon \big)^{q}}{4\beta ^{p}\big(T^{2}-1 \big)^{q-1} } \Big]^{1-d}.
\end{aligned}\end{equation}
Write $ \Big[1+\frac{\lambda \big(1-\beta -(T^{2}+1)\epsilon \big)^{q}}{4\beta ^{p}\big(T^{2}-1 \big)^{q-1} } \Big]^{1-d} \triangleq R$, then $R>1$.

Combining (\ref{0634}) and (\ref{0639}), if $\lambda _j<\epsilon $, we have  \begin{align}\label{0645}
\frac{\sum_{i=1}^{N(J_j)}|J_{j,i}|^{d} }{|J_{j}|^{d}}\ge \big(1-\lambda _j^{p} \big)^{(\bar{Q}+1)d}.\end{align}
Combining (\ref{0639}) and (\ref{0644}), if $\lambda _j<\epsilon $ and $\pi _{j+1}<\beta$, we have \begin{align}\label{0646}
\frac{\sum_{i=1}^{N(J_j)}|J_{j,i}|^{d} }{|J_{j}|^{d}}=
\frac{\sum_{i=1}^{N(J_j)}|J_{j,i}|^{d} }{\big(\sum_{i=1}^{N(J_j)}|J_{j,i}| \big)^{d}}\cdot
\frac{\big(\sum_{i=1}^{N(J_j)}|J_{j,i}| \big)^{d}}{|J_{j}|^{d}}\ge R\big(1-\lambda _j^{p} \big)^{(\bar{Q}+1)d}. \end{align}
Then by (\ref{0638}), (\ref{0645}) and (\ref{0646}), we obtain \begin{equation}\label{0647}
\begin{aligned}
\mathrm{I}&\ge \prod_{\substack{j=0\\\lambda _j<\epsilon,\pi _{j+1}< \beta}}^{b_k-1}  R\big(1-\lambda _j^{p} \big)^{(\bar{Q}+1)d}\cdot \prod_{\substack{j=0\\\lambda _j<\epsilon,\pi _{j+1}\ge \beta}}^{b_k-1} \big(1-\lambda _j^{p} \big)^{(\bar{Q}+1)d} \\&= R^{M_{\epsilon,\beta}(b_k)}\cdot \prod_{\substack{j=0\\\lambda _j<\epsilon } }^{b_k-1}\big(1-\lambda _j^{p} \big)^{(\bar{Q}+1)d}. \end{aligned} \end{equation}

On the other hand, for $\lambda _j<\epsilon$, notice that $0<p\le1$,  by ($3^*$) and the Jensen inequality, we have \begin{align*}
0\ge \frac{1}{b_k}\sum_{\substack{j=0\\\lambda _j<\epsilon } }^{b_k-1}\log_{T}{\big(1-\lambda _j^{p} \big)}
\ge -\frac{2}{b_k}\sum_{\substack{j=0\\\lambda _j<\epsilon } }^{b_k-1}\lambda _j^{p}
\ge -\frac{2}{b_k}\sum_{j=0}^{b_k-1}\lambda _j^{p}
\ge -2\big(\frac{1}{b_k}\sum_{j=0}^{b_k-1}\lambda _j \big)^{p} \end{align*}
for any $k\ge 1$. Notice that $\lim\limits_{k \to \infty} -2\big(\frac{1}{b_k}\sum_{j=0}^{b_k-1}\lambda _j \big)^{p}=0$ by $(\bar b)$ of Lemma \ref{lem09},  then  we obtain $\lim_{k \to \infty}\frac{1}{b_k}\sum_{\substack{j=0\\\lambda _j<\epsilon } }^{b_k-1}\log_{T}{\big(1-\lambda _j^{p} \big)}=0$, which yields \begin{align}\label{0648}
\lim_{k \to \infty}\big[\prod_{\substack{j=0\\\lambda _j<\epsilon } }^{b_k-1}(1-\lambda _j^{p}) \big]^{\frac{1}{b_k} } =1. \end{align}

By (\ref{0636}), (\ref{0647}) and (\ref{0648}), we obtain \begin{align}\label{0649}
\liminf \limits _{k\to \infty }\mathrm{(I)}^{\frac{1}{b_k}}\ge \liminf \limits _{k\to \infty }R^{\frac{M_{\epsilon,\beta}(b_k)}{b_k}}\cdot
\liminf \limits _{k\to \infty }\prod_{\substack{j=0\\\lambda _j<\epsilon }}^{b_k-1}\big(1-\lambda _j^{p} \big)^{\frac{(\bar{Q}+1)d}{b_k}}= R.  \end{align}

\textbf{Case 2:} If $\lambda _j \ge \epsilon$. In this case, by Lemma \ref{lem04} and the Jensen inequality, notice that $q\ge 1$ and $N(J_j)\le T^{2}$, we obtain \begin{align*}
\frac{\sum_{i=1}^{N(J_j)}|J_{j,i}| }{|J_{j}|}\ge \lambda \cdot \frac{\sum_{i=1}^{N(J_j)}|I_{j,i}|^{q}}{|I_{j}|^{q}} \ge
\frac{\lambda }{(T^{2})^{q-1}} \big(\frac{\sum_{i=1}^{N(J_j)}|I_{j,i}|}{|I_{j}|} \big)^{q}\ge
\frac{\lambda }{(T^{2})^{q-1}}\tau _j^{q}.\end{align*}
By (\ref{0634}), \begin{align}\label{0650}
\frac{\sum_{i=1}^{N(J_j)}|J_{j,i}|^{d} }{|J_{j}|^{d}}\ge \frac{\big(\sum_{i=1}^{N(J_j)}|J_{j,i}| \big)^{d} }{|J_{j}|^{d}}\ge
\big[\frac{\lambda }{(T^{2})^{q-1}}\tau _j^{q} \big]^{d}. \end{align}
Hence, \begin{align*}
\mathrm{II}=\prod_{\substack{j=0\\\lambda _j\ge \epsilon } }^{b_k-1}
\frac{\sum_{i=1}^{N(J_j)}|J_{j,i}|^{d}}{|J_{j}|^{d}}
\ge \prod_{\substack{j=0\\\lambda _j\ge \epsilon } }^{b_k-1}\big[\frac{\lambda }{(T^{2})^{q-1}}\tau _j^{q} \big]^{d}
&\ge (\prod_{j=0}^{b_k-1} \tau _j)^{qd} \cdot \prod_{\substack{j=0\\\lambda _j\ge \epsilon } }^{b_k-1}(\frac{\lambda}{(T^{2})^{q-1}})^{d}\\
&=(\prod_{j=0}^{b_k-1} \tau _j)^{qd} \cdot
(\frac{\lambda}{(T^{2})^{q-1}})^{d(b_k-L_\epsilon (b_k))}.
\end{align*}
Therefore, by $(\bar c)$ of Lemma \ref{lem09} and (\ref{0635}), we have   \begin{align}\label{0651}
\liminf \limits _{k\to \infty }\mathrm{(II)}^{\frac{1}{b_k}}\ge \liminf_{k \to \infty} (\prod_{j=0}^{b_k-1} \tau _j)^{\frac{qd}{b_k} }\cdot\liminf \limits _{k\to \infty }(\frac{\lambda}{(T^{2})^{q-1}})^{\frac{d(b_k-L_\epsilon (b_k) )}{b_k} }=1. \end{align}

Combining (\ref{0637}), (\ref{0649}) and (\ref{0651}), we have  \begin{align*}
\liminf \limits _{k\to \infty}\big(\prod_{j=0}^{b_k-1} \frac{\sum_{i=1}^{N(J_j)}|J_{j,i}|^{d} }{|J_{j}|^{d}} \big)^{\frac{1}{b_k}}\ge \liminf \limits _{k\to \infty}(\mathrm{I})^{\frac{1}{b_k}}\cdot \liminf \limits _{k\to \infty}(\mathrm{II})^{\frac{1}{b_k}}\ge R\cdot 1>1, \end{align*}
which means (\ref{0632}) holds.

Therefore, there exists a constant $C>0$ satisfying  \begin{equation*}
\frac{\mu_d(J_{b_k})}{\left | J_{b_k} \right |^d }=\prod_{j=0}^{b_k-1}\frac{\left | J_j \right |^d}{\sum_{i=1}^{N(J_j)}\left | J_{j,i} \right |^d}\le C,\end{equation*}
which implies that we finish the proof of  Proposition \ref{pro1}.
\end{proof}

\subsection{The proof of Theorem \ref{thm2} }
We do the final proof of Theorem \ref{thm2}. Let $E\in \mathcal{M}(I_{0},\left\{n_{k}\right\},\left\{c_{k}\right\})$ satisfy the conditions of Theorem \ref{thm2} with $\dim_PE=1$, $f$ be a \text{\rm1}-dimensional quasisymmetric mapping, $\{b_k\}_{k\ge 0}$ be the sequence in Lemma \ref{lem09}. For any $y\in f(E)$, since $f$ is a homeomorphism, the function given by $g_y(r)=|f^{-1}\big(B(y,r) \big)|$ is monotone increasing, satisfying $\lim\limits_{r \to 0} g_y(r)=0$. Then there exists a sequence $\left \{ r_k \right \} _{k\ge 1}$ with $\lim\limits_{k \to \infty} r_k=0$, fulfilling \begin{align}\label{0652}
\min_{I\in \mathcal{S}_{b_k}}|I|\le g_y(r_k)<\min_{I\in \mathcal{S}_{b_k-1}}|I|. \end{align}

If condition {\rm(A)} of Theorem {\rm\ref{thm2}} holds, then for any $k\ge 1$, $$\#\big (\{ I:I\in \mathcal{S}_{b_k-1},I\cap f^{-1}(B(y,r_k))\ne \emptyset  \}\big )\le 2.$$
Thus, $$\#\big (\{ I:I\in \mathcal{S}_{b_k},I\cap f^{-1}(B(y,r_k))\ne \emptyset  \}\big )\le 2T^2,$$
which implies $$\#\big (\{ J:J\in f(\mathcal{S}_{b_k}),J\cap B(y,r_k)\ne \emptyset  \}\big )\le 2T^2.$$
Suppose $G_1,G_2, \cdots , G_l(1\le l\le 2T^{2})$ are all of the basic intervals of $f(S_{b_k})$ which intersect $B(y,r_k)$, then $B(y,r_k)\cap f(E)\subset \big(\bigcup_{i=1}^{l}G_i \big)$.
By Proposition \ref{pro1}, we obtain  \begin{align}\label{0653}
\mu _d\big(B(y,r_k) \big)=\mu _d\big(B(y,r_k)\cap f(E) \big)\le \mu _d\big(\bigcup_{i=1}^{l}G_i \big)\le
\sum_{i=1}^{l} \mu _d(G_i)\le C\sum_{i=1}^{l}|G_i|^{d}. \end{align}

By (\ref{0652}) and (4) of Lemma \ref{lem06}, we have  \begin{align}\label{0654}
\min_{I\in \mathcal{S}_{b_k}}|I|\le |f^{-1}\big(B(y,r_k) \big)|,~\max_{I\in \mathcal{S}_{b_k}}|I|\le 2\big(1+w_1 \big)\min_{I\in \mathcal{S}_{b_k}}|I|.  \end{align}
Thus, for any $1\le i\le l$,  $$|f^{-1}(G_i)|\le \max_{I\in \mathcal{S}_{b_k}}|I|\le 2\big(1+w_1 \big)\min_{I\in \mathcal{S}_{b_k}}|I|\le 2\big(1+w_1 \big)|f^{-1}\big(B(y,r_k) \big)|.$$
Note that $B(y,r_k)\bigcap G_i\ne \emptyset$, then we have  \begin{align}\label{0655}
f^{-1}(G_i)\subset 6\big(1+w_1 \big)f^{-1}\big(B(y,r_k) \big), \end{align}
where $\rho I$ is the interval concentric with $I$ of length  $\rho |I|$ for any interval $I$ and $\rho >0$.

By (\ref{0655}), Lemma \ref{lem04} and the homeomorphism of $f$, we obtain \begin{align}\label{0656}
|G_i|\le |f\Big(6\big(1+w_1 \big)f^{-1}\big(B(y,r_k) \big) \Big)|\le  K_{6(1+w_1)}|B(y,r_k)|= 2K_{6(1+w_1)}r_k. \end{align}
Notice that $1\le l \le 2T^2$ and $d\in (0,1)$, combining (\ref{0653}) and (\ref{0656}), we have  \begin{align*}
\mu _d\big(B(y,r_k) \big)&\le C\sum_{i=1}^{l}|G_i|^{d}\le C\cdot 2T^{2}\big(2K_{6(1+w_1)}r_k \big)^{d}\le 4K_{6(1+w_1)}^{d}T^{2}C\big(r_k\big)^{d}.  \end{align*}
Write $ 4K_{6(1+w_1)}^{d}T^{2}C \triangleq \tilde{C}_1$, notice that $\lim\limits_{k \to \infty} r_k=0$, then we have
\begin{align}\label{0657} \liminf\limits _{r\to 0}\frac{\mu _d\big(B(y,r) \big)}{r^{d} } \le \tilde{C}_1. \end{align}

By the arbitrariness of  $y\in f(E)$, we obtain $\dim_Pf(E)\ge d$ by (\ref{0657}) and Lemma \ref{lem03}. Then by the arbitrariness of $d\in (0,1)$, we have $\dim_Pf(E)\ge 1$. Obviously, $\dim_Pf(E)\le 1$, which implies $\dim_Pf(E)=1$.

If condition {\rm(B)} of Theorem {\rm\ref{thm2}} holds, by the similar argument  to the case that the condition {\rm(A)} of Theorem {\rm\ref{thm2}} holds(replace $1+w_1$ by $w_2$), we can prove that there exists a constant $\tilde{C}_2>0$ such that
\begin{align}\label{0658} \liminf\limits _{r\to 0}\frac{\mu _d\big(B(y,r) \big)}{r^{d} } \le \tilde{C}_2. \end{align}
By the same argument above, we obtain $\dim_Pf(E)=1$.

Hence we complete  the proof of Theorem {\rm\ref{thm2}}.

\bigskip

\section{Examples}
We state two examples in this section to show that  Theorems \ref{thm1} and \ref{thm2} respectively generalize the results in \cite{wxywuj08} and \cite{lly25}, and state a example to show that we can not improve the conditions of  Theorem \ref{thm1}.

In the next example, we give a  homogeneous Moran set $E\in \mathcal{M}(I_{0},\left\{n_{k}\right\},\left\{c_{k}\right\})$ which satisfies the conditions of   Theorem \ref{thm1} of this paper, but does not satisfy the conditions of Theorem 1.4 of \cite{wxywuj08}.
\begin{exa}\label{exa1}
{\rm
Let $E\in \mathcal{M}(I_{0},\left\{n_{k}\right\},\left\{c_{k}\right\})$ satisfy for any $k\ge 1$ and $\sigma \in D_{k-1}$,
\begin{enumerate}
\item[\textup{$(1)$}] $n_k \equiv 3$, $c_k \equiv \frac{1}{5}$;
\item[\textup{($2$)}] $\xi_{\sigma ,0}=\xi_{\sigma ,3}=0$,  which implies $L_k=R_k=0$.
\end{enumerate}
 Then the number of the $k$-order basic intervals of $E$ is $N_k=\prod_{i=1}^{k}n_i=3^{k} $, and let $I_{\sigma_1},I_{\sigma_2},\cdots ,I_{\sigma_{3^k}}$ denote all of the $k$-order basic intervals of $E$ locating from left to right, where $\sigma_i \in D_k$ for any $1\le i\le 3^k$. Furthermore, $E$ satisfies the following formulas:
$$\xi_{\sigma_1,1}=\frac{1}{5^k}\cdot \frac{3}{10}=\delta_k\cdot \frac{3}{10} ;$$
 $$\xi_{\sigma_1,2}=\frac{1}{5^k}\cdot \frac{1}{10}=\delta_k\cdot \frac{1}{10} ;$$
 $$\xi_{\sigma_i,1}=\frac{1}{5^k}\cdot \frac{1}{10}=\delta_k\cdot \frac{1}{10}(2\le i\le 3^k);$$
$$\xi_{\sigma_i,2}=\frac{1}{5^k}\cdot \frac{3}{10}=\delta_k\cdot \frac{3}{10}(2\le i\le 3^k).$$
Then for any $k\ge 0$, $\sigma \in D_k$,
$$|I_{\sigma }|=\sum_{j=0}^{n_{k+1}}\xi_{\sigma,j}+n_{k+1}\delta_{k+1} =(\frac{1}{10}+\frac{3}{10} )\delta_{k}+3\cdot \frac{1}{5}\delta_{k}=\delta_{k}.$$

It is easy to obtain that $\overline{\nu } _{k+1}=\frac{3}{10}\delta_{k}$, $\underline{\nu } _{k+1}=\frac{1}{10}\delta_{k}$, $\delta_{k+1}=\frac{1}{5} \delta_{k}$ for any $k\ge 0$. Take $w_1=w_2=3$, $w_3=\frac{3}{10}$, it is obvious that $E$ satisfies all of the conditions of Theorem \ref{thm1} of this paper.
Then by (\ref{031}), we obtain $$\overline{\dim}_{B}E=\dim_PE=\limsup\limits_{k\to \infty }\frac{\log{3^{k+1} } }{-\log{5^{-k}}+\log{3}} =\frac{\log{3}}{\log{5}}.$$

However, for any $k\ge 1$, there exist $\sigma, \sigma^{\prime} \in D_k$, such that $\xi_{\sigma,1}\ne \xi_{\sigma^{\prime },1}$  and $\xi_{\sigma,2}\ne \xi_{\sigma^{\prime },2}$, which  means that $E$ is not a homogeneous perfect set, then $E$ does not satisfy the conditions of Theorem 1.4 of \cite{wxywuj08}.

}
\end{exa}

By Remark \ref{rm4}, Any homogeneous Moran set $E$ satisfying the conditions of Theorem 1.4 of \cite{wxywuj08}(which implies that $E$ is a homogeneous perfect set) is a homogeneous Moran set satisfying the conditions of Theorem \ref{thm1} of this paper, equation (1.4) of Theorem 1.4 of \cite{wxywuj08} and equation (\ref{031}) of Theorem \ref{thm1} of this paper are equivalently.  In Example \ref{exa1}, we give a  homogeneous Moran set $E$ such that all of the conditions of   Theorem \ref{thm1} of this paper are fulfilled, but the conditions of Theorem 1.4 of \cite{wxywuj08} are not fulfilled(since $E$ is not a homogeneous perfect set), and calculate the packing and upper box dimensions of $E$ by equation (\ref{031}) of Theorem \ref{thm1} of this paper. Therefore, Theorem \ref{thm1} of this paper is a  generalization of Theorem 1.4 of \cite{wxywuj08}.

\bigskip
In the next example, we give a  homogeneous Moran set $E\in \mathcal{M}(I_{0},\left\{n_{k}\right\},\left\{c_{k}\right\})$ which satisfies the conditions of   Theorem \ref{thm2} in this paper, but does not satisfy the conditions of Theorem 1 of \cite{lly25}.
\begin{exa}\label{exa2}{\rm
Let $E\in \mathcal{M}(I_{0},\left\{n_{k}\right\},\left\{c_{k}\right\})$ satisfy the following conditions:
\begin{enumerate}
\item[\textup{$(1)$}] $n_k =2^{k+1}$, $c_k =(2n_k)^{-1}$ for any $k\ge 1$;
\item[\textup{($2$)}] $\xi_{\sigma ,0}=\xi_{\sigma ,n_{k+1}}=0$ for any $k\ge 0$, $\sigma \in D_k$,  which implies  $L_{k+1}=R_{k+1}=0$.
\end{enumerate}
Then for any $k\ge 1$, the number of the $k$-order basic intervals of $E$ is $$N_k=n_1n_2\cdots n_k=2^{\frac{k(3+k)}{2}}.$$ and let $I_{\sigma_1},I_{\sigma_2},\cdots ,I_{\sigma_{N_k}}$ denote all of the $k$-order basic intervals of $E$ locating from left to right, where $\sigma_i \in D_k$ for any $1\le i\le N_k$. Furthermore, $E$ satisfies the following formulas:
$$\xi_{\sigma_1,1}=2\delta_{k+1};$$ $$\quad \xi_{\sigma_1,2}=\xi_{\sigma_1,3}=\cdots =\xi_{\sigma_1,n_{k+1}-1}=\delta_{k+1};$$
$$\xi_{\sigma_i,n_{k+1}-1 }=2\delta_{k+1}~ (2\le i\le N_k);$$ $$\quad \xi_{\sigma_i,1}=\xi_{\sigma_i,2}=\cdots =\xi_{\sigma_i,n_{k+1}-2}=\delta_{k+1} ~ (2\le i\le N_k).$$
Then for any $k\ge 0$, $\sigma\in D_k$, $\delta_{k+1}=c_{k+1}\delta_{k}=(2n_{k+1})^{-1}\delta_{k}$, and
$$|I_{\sigma }|=\sum_{j=0}^{n_{k+1}}\xi_{\sigma,j}+n_{k+1}\delta_{k+1}
=[(n_{k+1}-2)\delta_{k+1}+2\delta_{k+1}]+n_{k+1}\delta_{k+1}=2n_{k+1}\delta_{k+1}=\delta_{k}.$$

It is easy to obtain that $\overline{\nu } _{k+1}=2\delta_{k+1}$, $\underline{\nu } _{k+1}=\delta_{k+1}$ for any $k\ge 0$. Take $w_1=w_2=2$, it is obvious that $E$ satisfies all of the conditions of Theorem \ref{thm2} in this paper.

Notice that $E$ also satisfies the conditions  (A) and (B) in Theorem \ref{thm1} of this paper, and
$$n_1n_2\cdots n_kn_{k+1}=2^{\frac{(k+1)(k+4)}{2} },$$
$$\delta_{k}=c_1c_2\cdots c_k=(2^{k}n_1n_2\cdots n_k )^{-1}=(2^{k}\cdot 2^{2}\cdot 2^{3}\cdots 2^{k+1})^{-1}=2^{-(\frac{k(5+k)}{2})} .$$
Then by (\ref{031}), we obtain
\begin{align*}
\dim_PE&=\limsup\limits_{k \to\infty}  \frac{\log n_1n_2\cdots n_kn_{k+1}}{-\log(\delta_k-L_{k+1}-R_{k+1})+\log n_{k+1}}\\
&=\limsup\limits_{k \to\infty}  \frac{\log n_1n_2\cdots n_kn_{k+1}}{-\log{\delta_k}+\log n_{k+1}}\\
&=\limsup\limits_{k \to\infty}  \frac{\log{2^{\frac{(k+1)(k+4)}{2} } } }{-\log{2^{-(\frac{k(5+k)}{2})} }+\log{2^{k+2} }  }\\
&=1,
\end{align*}
which implies $E$ is a homogeneous Moran set  which satisfies  all of the conditions of Theorem \ref{thm2} of this paper with $\dim_PE=1$.
Therefore, $E$ is a quasisymmetrically packing-minimal set by Theorem \ref{thm2} of this paper.

However, for any $k\ge 0$, there exist $\sigma, \sigma^{\prime} \in D_k$, such that $\xi_{\sigma,1}\ne \xi_{\sigma^{\prime },1}$  and $\xi_{\sigma,n_{k+1}-1 }\ne \xi_{\sigma^{\prime },n_{k+1}-1 }$, which means that $E$ is not a homogeneous perfect set, then $E$ does not satisfy the conditions of Theorem 1 of \cite{lly25}.

}
\end{exa}
By Remark \ref{rm5},   any homogeneous Moran set $E$ satisfying the conditions in Theorem 1 of \cite{lly25}(which implies that $E$ is a homogeneous perfect set)   is a homogeneous Moran set  satisfying condition (B) of Theorem \ref{thm2} of this paper.  In Example \ref{exa2}, we give a  homogeneous Moran set $E$ such that all of the conditions of   Theorem \ref{thm2} of this paper are fulfilled, but  the conditions of Theorem 1 of \cite{lly25}  are not fulfilled(since $E$ is not a homogeneous perfect set), and obtain that $E$ is quasisymmetrically packing-minimal set by Theorem \ref{thm2} of this paper. Therefore, Theorem \ref{thm2} of this paper is a  generalization of Theorem 1 of \cite{lly25}.

\bigskip
Finally, we give a example to show  that the conclusion of  Theorem \ref{thm1} may not hold if some conditions of  Theorem \ref{thm1}  are lacked.
\begin{exa}\label{exa3}{\rm
Let $E\in \mathcal{M}(I_{0},\left\{n_{k}\right\},\left\{c_{k}\right\})$ satisfy the following conditions:
\begin{enumerate}
\item[\textup{$(1)$}] $n_1 =2$, $c_1 =\frac{1}{4}$ ;
\item[\textup{($2$)}] $n_k=2^{n_1n_2\cdots n_{k-1} }$, $c_k=n_{k}^{-2}$ for any $k\ge 2$;
\item[\textup{($3$)}] $\xi_{\sigma ,0}=\xi_{\sigma ,1}=\cdots =\xi_{\sigma ,n_{k+1}-2}=\xi_{\sigma ,n_{k+1} }=0$(which means $L_{k+1}=R_{k+1}=0$ and $\xi_{\sigma ,n_{k+1}-1}=\delta_k-n_{k+1}\delta_{k+1}$) for any $k\ge 0$, $\sigma \in D_k$.
\end{enumerate}
In this case, it is easy to obtain that conditions (A), (B) and (C) in Theorem \ref{thm1} of this paper do not hold.

For any $\frac{1}{2}<s<1$ and $k_0\ge 1$,
\begin{align*}
\sum_{k=k_0}^{\infty} \sum_{\substack{\sigma\in D_k\\0\le i\le n_{k+1}} }\xi_{\sigma,i}^{s}
&=\sum_{k=k_0}^{\infty}\sum_{\sigma\in D_k }\xi_{\sigma,n_{k+1}-1}^{s} \\
&=\sum_{k=k_0}^{\infty}n_1n_2\cdots n_k(\delta_{k}-n_{k+1}\delta_{k+1})^{s} \\
&\le\sum_{k=k_0}^{\infty}n_1n_2\cdots n_k(\delta _k)^{s} \\
&=\sum_{k=k_0}^{\infty}(n_1n_2\cdots n_k)^{1-2s} \\
&<\infty .\end{align*}
Reference \cite{fk1990} showed that $\dim_PE \le \overline{\dim}_BE$ for any nonempty bounded set $E\subseteq \mathbb{R}$, combing with Lemma \ref{lem01} and the arbitrariness of $s\in (\frac{1}{2}, 1)$, we obtain \begin{equation}\label{bpp} \dim_PE \le \overline{\dim}_BE \le \frac{1}{2}. \end{equation}

On the other hand, for any $k\ge 0$, $\sigma \in D_k$, since $L_{k+1}=R_{k+1}=0$, we have $$\delta_{k}-L_{k+1}-R_{k+1}=\delta_{k}=\prod_{i=1}^{k}c_i=(n_1n_2\cdots n_k)^{-2}.$$
Hence,
\begin{align*}
&\limsup\limits_{k \to\infty}  \frac{\log n_1n_2\cdots n_kn_{k+1}}{-\log(\delta_k-L_{k+1}-R_{k+1})+\log n_{k+1}}\\
&=\limsup\limits_{k \to\infty}  \frac{\log n_1n_2\cdots n_kn_{k+1}}{-\log(n_1n_2\cdots n_k)^{-2}+\log n_{k+1} }\\
&=\limsup\limits_{k \to\infty}  \frac{\log {n_1n_2\cdots n_k} + n_1n_2\cdots n_k\log 2}{2\log{n_1n_2\cdots n_k} +n_1n_2\cdots n_k\log 2 }\\
&=\limsup\limits_{k \to\infty} \frac{ \frac{\log n_1n_2\cdots n_k}{n_1n_2\cdots n_k}  + \log 2}{2\cdot \frac{\log n_1n_2\cdots n_k}{n_1n_2\cdots n_k} + \log 2}\\
&=1.\end{align*}
Combing (\ref{bpp}), we have $$\limsup\limits_{k \to\infty}  \frac{\log n_1n_2\cdots n_kn_{k+1}}{-\log(\delta_k-L_{k+1}-R_{k+1})+\log n_{k+1}}> \overline{\dim}_BE\ge \dim_PE.$$ Therefore, we can not calculate the packing dimension and the upper box dimension  of $E$ by (\ref{031}).

}
\end{exa}

\bigskip
\bigskip

\textbf{Acknowledgement} The authors thank the reviewers for their helpful comments and suggestions.

\bigskip



\bigskip
\begin{thebibliography}{99}
\bibitem{fdj97} Feng D J, Wen Z Y, Wu J. \newblock Some dimensional results for homogeneous Moran sets. Science in China Series A: Mathematics, 1997, 40(5): 475-482.

\bibitem{wzywuj05} Wen Z Y, Wu J. \newblock Hausdorff dimension of homogeneous perfect sets. Acta Mathematica Hungarica, 2005, 107(1): 35-44.

\bibitem{wxywuj08} Wang X Y, Wu J. \newblock Packing dimensions of homogeneous perfect sets. Acta Mathematica Hungarica, 2008, 118(1-2): 29-39.

\bibitem{gwvj73} Gehring W, Vaisala J. \newblock Hausdorff dimension and quasiconformal mappings. Journal of the London Mathematical Society, 1973, 6: 504-512.

\bibitem{gw73} Gehring W. \newblock The $L^p$-integrability of the partial derivatives of a quasiconformal mapping. Bulletin of the American Mathematical Society, 1973, 79: 465-466.

\bibitem{bj99} Bishop J. \newblock Quasiconformal mappings which increase dimension. Annales Academiae Scientiarum Fennicae-mathematica, 1999, 24(2): 397-407.

\bibitem{tj00} Tyson J. \newblock Sets of minimal Hausdorff dimension for quasiconformal maps. Proceedings of the American Mathematical Society, 2000, 128(11): 3361-3367.

\bibitem{kv06} Kovalev V. \newblock Conformal dimension does not assume values between zero and one. Duke Mathematical Journal, 2006, 134(1): 1-13.

\bibitem{hh06} Hakobyan H. \newblock Cantor sets that are minimal for quasisymmetric mappings. Journal of Contemporary Mathematical Analysis, 2006, 41(2): 13-21.



\bibitem{dyx11} Dai Y X, Wen Z X, Xi L F, et al. \newblock Quasisymmetrically minimal Moran sets and Hausdorff dimension. Annales Academiae Scientiarum Fennicae-mathematica, 2011, 36: 139-151.

\bibitem{ww14} Wang W, Wen S Y. \newblock Quasisymmetric minimality of Cantor sets. Topology and its Applications, 2014, 178: 300-314.

\bibitem{yjj18} Yang J J, Wu M, Li Y Z. \newblock On quasisymmetric minimality of homogeneous perfect sets. Fractals, 2018, 26(1): 1850010.


\bibitem{Ahl06} Ahlfors V. \newblock Lectures on quasiconformal mappings. 2nd ed, Maryland: Vol. 38 of Unversity Lecture Series, American Mathematical Society, 2006.

\bibitem{lwx13} Li Y Z, Wu M, Xi L F. \newblock Quasisymmetric minimality on packing dimension for Moran sets. Journal of Mathematical Analysis and Applications, 2013, 408(1): 324-334.



\bibitem{lly25} Liu S S, Li Y Z, Yang J J. \newblock Quasisymmetric minimality on packing dimension for homogeneous perfect sets. Axioms, 2025, 14(10): 751.

\bibitem{hua00} Hua S, Rao H, Wen Z Y,  et al. \newblock On the structures and dimensions of Moran sets. Science in China Series A: Mathematics, 2000, 43(8): 836-852.

\bibitem{tc1981} Tricot C. \newblock Douze d\'{e}finitions de la densit\'{e} logarithmique. Comptes Rendus de l'Acad\'{e}mie des Sciences S\'{e}rie~I, 1981, 293: 549-552.

\bibitem{fk1990} Falconer K. \newblock Fractal geometry: mathematical foundations and applications. New York: John Wiley \& Sons, 1990.

\bibitem{wen00} Wen Z Y. \newblock Fractal geometry-mathematical foundation. Shanghai: Shanghai Science and Technology Education Press, 2000.

\bibitem{wjm93} Wu J M. \newblock Null sets for doubling and dyadic doubling measures. Annales Academiae Scientiarum Fennicae Series A. I. Mathematica, 1993, 18(1): 77-91.

\end{thebibliography}
\end{document}